\documentclass{article}
\usepackage[top=3cm,bottom=3cm,left=3cm,right=2.5cm]{geometry}
\usepackage{amsfonts}\usepackage{amssymb}
\usepackage{graphicx}\usepackage{amsmath}
\usepackage{subfigure}
\usepackage{abstract}
\usepackage{color}
\usepackage{cite}
\usepackage{graphics}
\usepackage{amsmath}
\usepackage{epstopdf}
\usepackage{amsthm}
\usepackage{float}

\newtheorem{theorem}{Theorem}[section]
\newtheorem{lemma}{Lemma}[section]

\newtheorem{remark}{Remark}[section]

\numberwithin{equation}{section}
\providecommand{\keywords}[1]{\textbf{Key words.} #1}



\title{The a posteriori error estimates and an adaptive algorithm of the FEM for transmission eigenvalues for anisotropic media}

\author{Shixi Wang$^{1}$,  Hai Bi$^{1}$, Yidu Yang$^{1,}$\footnote{Corresponding author. Email address: ydyang@gznu.edu.cn}
 \\\\
{\small $^1$ School of Mathematical Sciences,
Guizhou Normal University,Guiyang,  $550001$,  China}
}


\begin{document}
\date{}
\maketitle
\begin{abstract}
The transmission eigenvalue problem arising from the inverse scattering theory
is of great importance in the theory of qualitative methods and in the practical applications.
In this paper, we study the transmission eigenvalue problem for anisotropic inhomogeneous media in $\Omega\subset \mathbb{R}^d\,(d=2,3)$.
Using the $\mathbb{T}$-coercivity and the spectral approximation theory,
we derive an a posteriori estimator of residual type
and prove its effectiveness  and reliability for eigenfunctions.
In addition, we also prove the reliability of the estimator for transmission eigenvalues.
The numerical experiments indicate our method is efficient and can reach the optimal order $DoF^{-2m/d}$ by using piecewise polynomials of degree $m$ for real eigenvalues.
\end{abstract}
\keywords{transmission eigenvalues, $\mathbb{T}$-coercivity, a posteriori error estimates, adaptive algorithm.
}

\section{Introduction}

\indent The transmission eigenvalue problem arising from the inverse scattering theory for inhomogeneous media
 is of great importance in the proof of the unique determination of an inhomogeneous media, and transmission
eigenvalues can be reconstructed from the scattering data and used to obtain physical
properties of the unknown target \cite{cakoni2008,cakoni2013,colton1998}. So,
 the computation for transmission eigenvalues becomes an attractive issue in academic circle since the first paper by Colton, Monk and Sun \cite{colton2010}.\\
\indent In this paper, we consider the following transmission eigenvalue
problem: Find $\mathrm{k}\in\mathbb{C},(w,v)\in H^1(\Omega)\times H^1(\Omega)$ such that
\begin{equation}\label{s1.1}
\left\{
  \begin{array}{ll}
   \nabla\cdot(A\nabla w) + \mathrm{k}^2nw = 0 ~~&\mathrm{in}\,~ \Omega, \\
   \triangle v + \mathrm{k}^2v = 0~~&\mathrm{in}\,~ \Omega, \\
   w-v =0  ~~~~~~~~~~~~~&\mathrm{on} \,~\partial\Omega,\\
   \frac{\partial w}{\partial \nu_A}-\frac{\partial v}{\partial \nu}=0~~~~~~~~~~~~~&\mathrm{on}\,~ \partial\Omega,
  \end{array}
\right.
\end{equation}
where $\Omega\subset \mathbb{R}^d\,(d=2,3)$ is a bounded domain with Lipschitz boundary $\partial\Omega$,
$A(x)$ is a real-valued symmetric matrix function and $n(x)$ is the index of refraction, $\nu$ is the unit outward normal to the boundary $\partial \Omega$ and $\frac{\partial w}{\partial \nu_A} = \nu\cdot(A\nabla w)$.\\
\indent As for the problem (\ref{s1.1}), most existing numerical researches only cover the isotropic media, i.e., $A(x)=I$
(see \cite{cakoni2016,sunzhou2016} and the references therein, and see articles
\cite{an2016,zeng2016,yang2016,geng2016,chen2017,li2017,han2017,kleefeld2017,kleefeld2018a,kleefeld2018b,camano2018,mora2018,lih2018,yang2020} and so on). In the case of anisotropic inhomogeneous media ($A(x)\not=I$), due to the complexity of the problem being neither elliptic nor self-adjoint, few numerical treatments have been addressed.
Ji et al. \cite{ji2013} first propose a mixed finite element method and a multilevel method but without convergence proof,
Xie et al. \cite{xie2017} proved the convergence of the mixed finite element method and gave a multilevel correction method
using $\mathbb{T}$-coercivity, and Gong et al. \cite{gong2020} formulated the problem
as a eigenvalue problem of a holomorphic Fredholm operator function
and proposed a finite element discretization.
\\
When studying the transmission equation/eigenvalue problems, the  $\mathbb{T}$-coercivity is important and useful (see \cite{dhia2010,dhia2011,dhia2012,chesnel2013,wu2013,xie2017}) which is equivalent to the $\inf$-$\sup$ condition (see \cite{ciarlet2012}).
In this paper, using the $\mathbb{T}$-coercivity we study the a posteriori error estimates of residual type of mixed finite element method
and an adaptive method for the problem (\ref{s1.1}) when $A(x)\not=I$, and
our work has the following features:\\
\indent (1)~Under the adaptive computation mode, it generally needs to solve the primal and the adjoint problems at the same time when solving non-selfadjoint eigenvalue problems (see, e.g., \cite{gedicke,gasser}). Based on the mixed formulation which was proposed in \cite{dhia2011}, 
we show that $\lambda$ is the eigenvalue of the primal eigenvalue problem with the corresponding generalized eigenfunction $\mathbf{U}^{j}$ of order $j\geq 1$ if and only if $\lambda$ is the eigenvalue of the adjoint eigenvalue problem and $\mathbf{U}^{j}$ is a generalized eigenfunction of order $j\geq 1$ corresponding to $\lambda$ (see Theorem 2.1).
Due to this fact, in our adaptive method it has no need to analyze and solve
the adjoint eigenvalue problem but the primal problem only.\\
\indent (2)~ Xie and Wu \cite{xie2017} first presented an a priori error estimate of the mixed finite element method in terms of the gap of the generalized eigenfunction space.
Based on \cite{xie2017}, this paper further studies the a priori error estimate.
Note that if $(w,v)$ is the solution of the source problem corresponding to the problem (\ref{s1.1}) then $(w,v)\in H^{1+r}(\Omega)\times H^{1+r}(\Omega)$ (see \cite{grisvard1985}). However, to the best of our knowledge, the following regularity estimate
is not available in general:
\begin{equation}\label{s1.2}
\|w\|_{H^{1+r}}+\|v\|_{H^{1+r}}\leq C_{\Omega}(\|f\|_{L^{2}(\Omega)}+\|g\|_{L^{2}(\Omega)}),
\end{equation}
where $(f,g)\in L^{2}(\Omega)\times L^{2}(\Omega)$ is the right-hand side of the source problem and $C_{\Omega}$ is the a prior constant.
Hence, we use the approximation property of the finite element space and the fact that the point-wise convergence on a compact set implies the uniform convergence to prove that the discrete solution operator converges to the solution operator according to the operator norm.
Then we employ the $\mathbb{T}$-coercivity and the spectral approximation theory to prove that the approximate eigenpairs converge to the exact eigenpairs, and prove the error estimate of eigenfunctions in $L^2(\Omega)\times L^2(\Omega)$ norm is a small quantity of higher order compared with the error estimate in $H^1(\Omega)\times H^1(\Omega)$ norm.\\
\indent (3)~In practical finite element computations, it is desirable to carry out the
computations in adaptive fashion (see, e.g., \cite{ainsworth,babuska1978,brenner1,morin,shi2013,verfurth1996}
 and the references cited therein).
\cite{wu2013} gave an a posteriori error estimate for the boundary value problem associated with the eigenvalue problem (\ref{s1.1}).
Here we propose an a posteriori estimator of residual type for (\ref{s1.1}).
Using the $\mathbb{T}$-coercivity
we prove the a posteriori error formulas (see Lemma 3.1),
and give the global upper bound and the local lower bound for the error of eigenfunctions.
In particular, we also prove the reliability of the estimator for transmission eigenvalues.
Numerical experiments indicate that our method is efficient and can reach the optimal order $DoF^{-2m/d}$
by using piecewise polynomials of degree $m$ for real eigenvalues.\\
\indent In this paper, regarding the basic theory of finite element methods, we refer to \cite{babuska1991,boffi2010,brenner2002,ciarlet1991,oden2012}.\\
\indent Throughout this article, for the sake of simplicity, we  write $a \lesssim b$ when $a \leq C b$ and $a \gtrsim b$ when $a \geq C b$ for some positive constant $C$ independent of the finite element mesh.

\section{Finite element approximation and its a priori error estimates}\label{sec:sec2}

\indent In this section, we introduce the finite element approximation and its a priori error estimates for (\ref{s1.1}). We assume that there exists a real number $\gamma>1$ such that
\begin{equation}\label{s2.1}
\xi\cdot A\xi > \gamma|\xi|^2 \quad \forall \xi\in\mathbb{R}^d, n(x) > \gamma, \text{~a.\,e.} ~\mathrm{in}\,~ \Omega.
\end{equation}

\indent Let $H^s(D)$ be the Sobolev space on the domain $D\subseteq \Omega$ with the associated norm $\|\cdot\|_{s,D}$. We omit the subscript $D$ if $D=\Omega$ from the norm notation.

\indent The eigenvalue problem (\ref{s1.1}) can be transformed into the following version:
Find $\lambda = \mathrm{k}^2 +1\in \mathbb{C},(u,w)\in H^1(\Omega)\times H^1(\Omega)$ such that
\begin{equation}\label{s2.2}
\left\{
  \begin{array}{ll}
   -\nabla\cdot(A\nabla w) + n(x)w = \lambda n(x)w ~~&\mathrm{in}\, \Omega, \\
   -\triangle v + v = \lambda v~~&\mathrm{in}\, \Omega, \\
   w-v =0  ~~~~~~~~~~~~~&\mathrm{on} \,\partial\Omega,\\
   \frac{\partial w}{\partial \nu_A}-\frac{\partial v}{\partial \nu}=0~~~~~~~~~~~~~&\mathrm{on}\, \partial\Omega.
  \end{array}
\right.
\end{equation}
\indent Define the function spaces $\mathbb{V}$ and $\mathbb{W}$ as follows:
\begin{align}\label{s2.3}
& \mathbb{V} = \bigg\{\Psi=(\varphi,\psi)\in H^1(\Omega)\times H^1(\Omega)\,|\, \varphi-\psi\in H_0^1(\Omega) \bigg\}, \\\label{s2.4}
& \mathbb{W} = L^2(\Omega)\times L^2(\Omega)
\end{align}
and the associated norm is given by
\begin{equation*}
\|\Psi\|_{\mathbb{V}} = \bigg(\|\varphi\|_1^2 + \|\psi\|_1^2\bigg)^{1/2}
\mathrm{and}\,~
\|\Psi\|_{\mathbb{W}} = \bigg(\|\varphi\|_0^2 + \|\psi\|_0^2\bigg)^{1/2},
\end{equation*}
respectively, where $\mathbf{\Psi}=(\varphi,\psi)$.\\
Then $\mathbb{V}\subset \mathbb{W}$ with a compact imbedding.\\
\indent For $\mathbf{U} = (w,v),\Psi = (\varphi,\psi)\in\mathbb{V},$ we define two sesquilinear forms
\begin{align}\label{s2.5}
& a(\mathbf{U},\Psi) = (A\nabla w,\nabla \varphi)_0 + (n(x)w,\varphi)_0 - (\nabla v,\nabla \psi)_0 -(v,\psi)_0,\\\label{s2.6}
& b(\mathbf{U},\Psi) = (n(x)w,\varphi)_0 - (v,\psi)_0,
\end{align}
where $(w,\varphi)_0 = \int_{\Omega}w\overline{\varphi}\,\mathrm{d}x$.\\
\indent It is clear that
\begin{eqnarray}\label{s2.7}
&&|a( \mathbf{U}, \Psi)|\lesssim \|\mathbf{U}\|_{\mathbb{V}}\|\Psi\|_{\mathbb{V}}\quad\forall  \mathbf{U}, \Psi\in \mathbb{V},\\\label{s2.8}
&&|b( \mathbf{U}, \Psi)|\lesssim \|\mathbf{U}\|_{\mathbb{W}}\|\Psi\|_{\mathbb{W}}\quad\forall  \mathbf{U}, \Psi\in \mathbb{W}.
\end{eqnarray}
\indent Thanks to \cite{dhia2011,xie2017},
the variational form associated with (\ref{s2.2}) is given by:
Find $\lambda\in \mathbb{C}$ and $\mathbf{U}\in \mathbb{V}$, $\mathbf{U}\neq 0$, such that 
\begin{equation}\label{s2.9}
a(\mathbf{U},\Psi) = \lambda b(\mathbf{U},\Psi)\quad \forall \Psi\in\mathbb{V}.
\end{equation}
Then the corresponding adjoint eigenvalue problem is:
Find $\lambda^*\in \mathbb{C}$ and $\mathbf{U}^*\in\mathbb{V}$, $\mathbf{U}^*\neq 0$, such that
\begin{equation}\label{s2.10}
a(\Psi,\mathbf{U}^*) = \bar{\lambda}^* b(\Psi,\mathbf{U}^*)\quad \forall \Psi\in\mathbb{V}.
\end{equation}
\indent Let $\lambda$ be the eigenvalue of (\ref{s2.9}). From page 683 and page 693 in \cite{babuska1991} we know that
 $\mathbf{U}^{j}$ is a generalized eigenfunction of order $j>1$ corresponding to $\lambda$
if and only if
\begin{eqnarray*}
a(\mathbf{U}^{j},\Psi) =\lambda b(\mathbf{U}^{j},\Psi)+\lambda a(\mathbf{U}^{j-1},\Psi)\quad \forall \Psi\in\mathbb{V},
\end{eqnarray*}
where $\mathbf{U}^{j-1}$ is a  generalized eigenfunction of order $j-1$ corresponding to $\lambda$. And
the eigenfunction is a generalized eigenfunction of order $1$.
\begin{theorem}
The number
$\lambda$ is the eigenvalue of (\ref{s2.9}) and $\mathbf{U}^{j}$ is a generalized eigenfunction of order $j\geq 1$  corresponding to $\lambda$
if and only if $\lambda$ is the eigenvalue of (\ref{s2.10}) and $\mathbf{U}^{j}$ is a generalized eigenfunction of order $j\geq 1$ corresponding to $\lambda$.
\end{theorem}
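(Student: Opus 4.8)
The plan is to exploit the fact that, because $A$ is a real-valued symmetric matrix and $n$ is real-valued, both sesquilinear forms $a(\cdot,\cdot)$ and $b(\cdot,\cdot)$ defined in (\ref{s2.5})--(\ref{s2.6}) are Hermitian, namely
\begin{equation*}
a(\Psi,\mathbf{U}) = \overline{a(\mathbf{U},\Psi)},\qquad b(\Psi,\mathbf{U}) = \overline{b(\mathbf{U},\Psi)}\qquad\forall\,\mathbf{U},\Psi\in\mathbb{V}.
\end{equation*}
First I would verify this term by term: for the principal part, $(A\nabla w,\nabla\varphi)_0=\int_\Omega (A\nabla w)\cdot\overline{\nabla\varphi}\,\mathrm{d}x=\int_\Omega \nabla w\cdot\overline{A\nabla\varphi}\,\mathrm{d}x=\overline{(A\nabla\varphi,\nabla w)_0}$, where the middle equality uses the symmetry and real-valuedness of $A$; the zeroth-order terms $(n(x)w,\varphi)_0$, $(\nabla v,\nabla\psi)_0$ and $(v,\psi)_0$ are treated the same way, the first using that $n$ is real. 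This is the only place where the structural hypotheses on $A$ and $n$ are used.

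Next, for the case $j=1$ (ordinary eigenfunctions): if $\lambda$ and $\mathbf{U}^{1}$ satisfy (\ref{s2.9}), I would take the complex conjugate of $a(\mathbf{U}^{1},\Psi)=\lambda b(\mathbf{U}^{1},\Psi)$ and invoke the Hermitian identities to get $a(\Psi,\mathbf{U}^{1})=\bar\lambda b(\Psi,\mathbf{U}^{1})$ for all $\Psi\in\mathbb{V}$; this is exactly (\ref{s2.10}) with $\lambda^{*}=\lambda$ (so that $\bar\lambda^{*}=\bar\lambda$) and $\mathbf{U}^{*}=\mathbf{U}^{1}$, hence $\lambda$ is an eigenvalue of the adjoint problem with eigenfunction $\mathbf{U}^{1}$. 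Since complex conjugation is an involution, the same computation read backwards gives the converse.

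For $j>1$ I would argue by induction on $j$, the base case $j=1$ being settled above. Writing out the recursive characterization of a generalized eigenfunction of order $j$ for the adjoint problem, $a(\Psi,\mathbf{U}^{j})=\bar\lambda b(\Psi,\mathbf{U}^{j})+\bar\lambda a(\Psi,\mathbf{U}^{j-1})$ for all $\Psi\in\mathbb{V}$, where $\mathbf{U}^{j-1}$ is a generalized eigenfunction of order $j-1$ for the adjoint, I would take the complex conjugate of the corresponding primal identity $a(\mathbf{U}^{j},\Psi)=\lambda b(\mathbf{U}^{j},\Psi)+\lambda a(\mathbf{U}^{j-1},\Psi)$ and apply the Hermitian property of $a$ and $b$; this yields precisely the adjoint identity above, provided $\mathbf{U}^{j-1}$ is already known to be a generalized eigenfunction of order $j-1$ for the adjoint, which is the induction hypothesis. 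The converse again follows by reversing the conjugation.

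The argument is essentially bookkeeping. The one point that requires care is keeping the recursion for the adjoint problem consistent with the stated recursion for the primal problem and tracking the conjugates correctly — in particular, checking that the adjoint eigenvalue comes out as $\lambda^{*}=\lambda$ (not $\bar\lambda$), which is forced by the $\bar\lambda^{*}$ appearing in (\ref{s2.10}). I do not anticipate a genuine obstacle beyond this.
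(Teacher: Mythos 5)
Your proposal is correct and follows essentially the same route as the paper's own proof: both rely on the Hermitian (self-adjoint) character of $a(\cdot,\cdot)$ and $b(\cdot,\cdot)$, take complex conjugates of the eigenvalue/generalized-eigenfunction identities, and induct on the order $j$, with the converse obtained by reversing the conjugation. The only difference is that you verify the Hermitian property term by term from the real symmetric $A$ and real $n$, a step the paper simply asserts.
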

\begin{proof}
The proof is completed by induction.
Let $\lambda$ be the eigenvalue of (\ref{s2.9}) and $\mathbf{U}$ be a generalized eigenfunction of order $1$
corresponding to $\lambda$.
Since $a(\cdot,\cdot)$ and $b(\cdot,\cdot)$ are both self-adjoint, we get
\begin{eqnarray*}
a(\Psi, \mathbf{U})=\overline{a(\mathbf{U},\Psi)} =\overline{ \lambda b(\mathbf{U},\Psi)}
=\overline{ \lambda} b(\Psi, \mathbf{U})\quad \forall \Psi\in\mathbb{V}.
\end{eqnarray*}
This shows $\lambda$ is the eigenvalue of (\ref{s2.10}) and $\mathbf{U}$ is a generalized eigenfunction of order $1$
corresponding to $\lambda$.\\
\indent Suppose that the conclusion 2.1 holds for $j-1$.
Let $\mathbf{U}^{j}$ be a generalized eigenfunction of (\ref{s2.9}) of order $j$
corresponding $\lambda$, then
\begin{eqnarray*}
a(\mathbf{U}^{j},\Psi)=\lambda b(\mathbf{U}^{j},\Psi)+a(\mathbf{U}^{j-1},\Psi)\quad\forall \Psi\in\mathbb{V}.
\end{eqnarray*}
Again from the fact that $a(\cdot,\cdot)$ and $b(\cdot,\cdot)$ are self-adjoint, we deduce
\begin{eqnarray*}
&&a(\Psi, \mathbf{U}^{j})= \overline{a(\mathbf{U}^{j},\Psi)}=\overline{ \lambda b(\mathbf{U}^{j},\Psi)+a(\mathbf{U}^{j-1},\Psi)}\\
&&~~~=\overline{ \lambda} b(\Psi,\mathbf{U}^{j})+a(\Psi, \mathbf{U}^{j-1})\quad\forall \Psi\in\mathbb{V}.
\end{eqnarray*}
This indicates $\lambda$ is the eigenvalue of (\ref{s2.10}) and $\mathbf{U}^{j}$ is a generalized eigenfunction of order $j$ corresponding to  $\lambda$.\\
\indent Conversely, we can prove the other part of the theorem is valid.
\end{proof}
\begin{remark}
When $\lambda$ is a complex eigenvalue and $\mathbf{U}$ is an eigenfunction corresponding to $\lambda$, we have
$$\lambda b(\mathbf{U},\mathbf{U}) = a(\mathbf{U},\mathbf{U}).$$
Note that $b(\mathbf{U},\mathbf{U})$ and $a(\mathbf{U},\mathbf{U})$
are both real numbers, then there must hold $b(\mathbf{U},\mathbf{U}) = 0$ and $a(\mathbf{U},\mathbf{U})=0$.
\end{remark}
\indent In order to analyze the eigenvalue problem (\ref{s2.9}), we introduce the  $\mathbb{T}$-coercivity for the sesquilinear  form $a(\cdot,\cdot)$.
We use $\mathbb{T}$ to denote an isomorphic operator from $\mathbb{V}$ to $\mathbb{V}$ which is defined as follows:
\begin{equation}\label{s2.11}
\mathbb{T}\Psi = (\varphi,2\varphi-\psi) \quad \forall \Psi=(\varphi,\psi)\in\mathbb{V}.
\end{equation}
\indent Thanks to \cite{dhia2011,xie2017}, we know that $a(\cdot,\cdot)$ is $\mathbb{T}$-coercive:
\begin{equation}\label{s2.12}
| a(\Psi,\mathbb{T}\Psi)|\gtrsim \|\Psi\|_{\mathbb{V}}^{2} \quad \forall \Psi=(\varphi,\psi)\in\mathbb{V}.
\end{equation}
In fact,
\begin{eqnarray*}
&&| a(\Psi,\mathbb{T}\Psi)|=|(A\nabla\varphi,\nabla\varphi )_{0}+(n(x)\varphi,\varphi )_{0}
-(\nabla\mathbf{\psi}, \nabla(2\varphi-\psi))_{0}-(\mathbf{\psi}, 2\varphi-\psi)_{0}|\nonumber\\
&&~~~\geq\gamma\|\varphi\|_{1}^{2}+\|\psi\|_{1}^{2}-2|(\nabla\psi,\nabla\varphi)_{0}|
-2|(\psi,\varphi)_{0}|\nonumber\\
&&~~~\geq (\gamma-\delta^{-1})\|\varphi\|_{1}^{2}+(1-\delta)\|\psi\|_{1}^{2},
\end{eqnarray*}
thus we can choose $\delta\in (1/\gamma,1)$ such that (\ref{s2.12}) holds.\\
\indent Theorem 1 in \cite{ciarlet2012} and Theorem 1 in \cite{xie2017} show that the $\mathbb{T}$-coercivity of the form $a(\cdot,\cdot)$ is equivalent to the inf-sup condition of the form $a(\cdot,\cdot)$, namely, there hold
\begin{align}
\label{s2.13}
& \inf_{0\neq\Phi\in\mathbb{V}}\sup_{0\neq\Psi\in\mathbb{V}}\frac{|a(\Phi,\Psi)|}{\|\Phi\|_{\mathbb{V}}\|\Psi\|_{\mathbb{V}}}
\geq\mu_a, \\
\label{s2.14}
& \inf_{0\neq\Phi\in\mathbb{V}}\sup_{0\neq\Psi\in\mathbb{V}}\frac{|a(\Psi,\Phi)|}{\|\Psi\|_{\mathbb{V}}\|\Phi\|_{\mathbb{V}}}
\geq\mu_a,
\end{align}
for some positive constant $\mu_a$.\\
\indent Thanks to Section 8 in \cite{babuska1991} and Chapter 5 in \cite{babuska1973} we know that there are the solution operators
$\mathbb{K}:\mathbb{W}\to\mathbb{V}$ and $\mathbb{K}_{*}:\mathbb{W}\to\mathbb{V}$
satisfying
\begin{eqnarray}\label{s2.15}
&&a(\mathbb{K}\mathbf{F},\Psi) = b(\mathbf{F},\Psi) \quad \forall \Psi\in\mathbb{V},\\\label{s2.16}
&&a(\Psi, \mathbb{K}_{*}\mathbf{F}) = b(\Psi,\mathbf{F}) \quad \forall \Psi\in\mathbb{V},
\end{eqnarray}
and it is valid that
\begin{eqnarray}\label{s2.17}
&&\|\mathbb{K}\mathbf{F}\|_{\mathbb{V}} \lesssim  \|\mathbf{F}\|_{\mathbb{W}}\quad \forall \mathbf{F}\in\mathbb{W},\\\label{s2.18}
&&\|\mathbb{K}_{*}\mathbf{F}\|_{\mathbb{V}} \lesssim  \|\mathbf{F}\|_{\mathbb{W}}\quad \forall \mathbf{F}\in\mathbb{W}.
\end{eqnarray}
Thus, since $\mathbb{V}\subset \mathbb{W}$ with a compact imbedding,
$\mathbb{K}:\mathbb{W}\to \mathbb{W} $ is compact and $\mathbb{K}:\mathbb{V}\to \mathbb{V} $ is compact.\\
\indent From \cite{babuska1991}, we know that (\ref{s2.9}) and (\ref{s2.10}) have the equivalent operator form
\begin{equation}\label{s2.19}
\lambda\mathbb{K}\mathbf{U}= \mathbf{U}~~\mathrm{and}~~\lambda^{*}\mathbb{K}_{*}\mathbf{U}= \mathbf{U},\end{equation}
respectively.\\
\indent Let $\{\mathcal{T}_h\}$ be a family of regular triangulation of $\Omega$ with the mesh diameter $h$,
let $S^{h}\subset H^{1}(\Omega)$ be the conforming Lagrange finite element space on $\mathcal{T}_h$ which consists of piecewise polynomials of degree $m$.
Denote
\begin{equation*}
\mathbb{V}_h=\{(w_{h},v_{h})\in S^{h}\times S^{h}\,|\, w_{h}=v_{h}~\text{on}~\partial\Omega\}.
\end{equation*}
Then
 $\mathbb{V}_h\subset \mathbb{V}$.\\
Thus we also have the following discrete inf-sup conditions:
\begin{equation}\label{s2.20}
\|\Phi_h\|_{\mathbb{V}}
\lesssim
\sup_{\Psi_h\in\mathbb{V}_h}\frac{|a(\Phi_h,\Psi_h)|}{\|\Psi_h\|_{\mathbb{V}}}
\quad \mathrm{and}\quad
\|\Phi_h\|_{\mathbb{V}}
\lesssim
\sup_{\Psi_h\in\mathbb{V}_h}\frac{|a(\Psi_h,\Phi_h)|}{\|\Psi_h\|_{\mathbb{V}}}.
\end{equation}
\indent The finite element approximation of (\ref{s2.9}) is given by: Find $\lambda_h \in \mathbb{C}$ and $\mathbf{U}_h\in\mathbb{V}_{h}$, $\mathbf{U}_h\neq 0$, such that
\begin{equation}\label{s2.21}
a(\mathbf{U}_h,\Psi) = \lambda_hb(\mathbf{U}_h,\Psi) \quad \forall  \Psi\in\mathbb{V}_h.
\end{equation}
The finite element approximation of (\ref{s2.10}) is given by: Find $\lambda_h^{*} \in \mathbb{C}$ and $\mathbf{U}_h^{*}\in\mathbb{V}_{h}$, $\mathbf{U}_h^{*}\neq 0$, such that
\begin{equation}\label{s2.22}
a(\Psi,\mathbf{U}_h^{*}) = \lambda_h^{*}b(\Psi,\mathbf{U}_h^{*}) \quad \forall  \Psi\in\mathbb{V}_h.
\end{equation}
It is clear that Theorem 2.1 also holds for (\ref{s2.21}) and (\ref{s2.22}).\\
\indent Since (\ref{s2.20}) holds, there are
 the solution operators $\mathbb{K}_{h}:\mathbb{W}\to\mathbb{V}_{h}$ and $\mathbb{K}_{h*}:\mathbb{W}\to\mathbb{V}_{h}$ satisfying
\begin{eqnarray}\label{s2.23}
&&a(\mathbb{K}_{h}\mathbf{F},\Psi) = b(\mathbf{F},\Psi)\quad \forall \Psi\in\mathbb{V}_{h},\\\label{s2.24}
&&a(\Psi, \mathbb{K}_{h*}\mathbf{F}) = b(\Psi,\mathbf{F})\quad \forall \Psi\in\mathbb{V}_{h},
\end{eqnarray}
and it is valid that
\begin{eqnarray}\label{s2.25}
&&\|\mathbb{K}_{h}\mathbf{F}\|_{\mathbb{V}} \lesssim  \|\mathbf{F}\|_{\mathbb{W}}\quad \forall \mathbf{F}\in\mathbb{W},\\\label{s2.26}
&&\|\mathbb{K}_{h*}\mathbf{F}\|_{\mathbb{V}} \lesssim  \|\mathbf{F}\|_{\mathbb{W}}\quad \forall \mathbf{F}\in\mathbb{W}.
\end{eqnarray}
\indent From \cite{babuska1991}, we know that (\ref{s2.21}) and (\ref{s2.22}) have the equivalent operator form
\begin{equation}\label{s2.27}
\lambda_h\mathbb{K}_h\mathbf{U}_h= \mathbf{U}_h~~\mathrm{and}~~\lambda_h^{*}\mathbb{K}_h^{*}\mathbf{U}_h=\mathbf{U}_h,
\end{equation}
respectively.\\
\indent Let $P_h: \mathbb{V}\to\mathbb{V}_h$ and $P_h^*: \mathbb{V}\to\mathbb{V}_h$ be the projection operators defined by
\begin{align}\label{s2.28}
a(\Phi - P_h\Phi,\Psi_h)    &= 0\quad \forall\, \Psi_h\in\mathbb{V}_h, \\\label{s2.29}
a(\Psi_h,\Phi-P_h^*\Phi) &= 0\quad \forall\, \Psi_h\in\mathbb{V}_h.
\end{align}
Then $\mathbb{K}_{h}=P_{h}\mathbb{K}$ and $\mathbb{K}_{h*}=P_{h}^{*}\mathbb{K}^{*}$.
\begin{lemma}
Assume that (\ref{s2.1}) is valid, then
\begin{eqnarray*}
\|\mathbb{K}_{h}-\mathbb{K}\|_{\mathbb{V}}\to 0~as~h\to~0~~and~~\|\mathbb{K}_{h*}-\mathbb{K}_{*}\|_{\mathbb{V}}\to 0~as~h\to~0.
\end{eqnarray*}
\end{lemma}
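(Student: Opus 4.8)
The plan is to prove the norm convergence $\|\mathbb{K}_h-\mathbb{K}\|_{\mathbb{V}}\to 0$ by combining the projection identity $\mathbb{K}_h=P_h\mathbb{K}$ with the compactness of $\mathbb{K}:\mathbb{W}\to\mathbb{V}$ and a standard ``compactness upgrades pointwise to uniform'' argument. First I would record the basic pointwise (best-approximation) estimate: for any $\mathbf{F}\in\mathbb{W}$, using the discrete inf-sup condition (\ref{s2.20}) together with Galerkin orthogonality (\ref{s2.28}) one gets the Céa-type bound
\begin{equation*}
\|\mathbb{K}\mathbf{F}-\mathbb{K}_h\mathbf{F}\|_{\mathbb{V}}=\|\mathbb{K}\mathbf{F}-P_h\mathbb{K}\mathbf{F}\|_{\mathbb{V}}\lesssim \inf_{\Psi_h\in\mathbb{V}_h}\|\mathbb{K}\mathbf{F}-\Psi_h\|_{\mathbb{V}}.
\end{equation*}
Since $S^h$ is a conforming Lagrange space of degree $m\geq 1$ and $\mathbb{K}\mathbf{F}\in\mathbb{V}\subset H^1(\Omega)\times H^1(\Omega)$, the density of finite element spaces in $H^1$ (i.e. $\inf_{\Psi_h\in\mathbb{V}_h}\|\Phi-\Psi_h\|_{\mathbb{V}}\to 0$ as $h\to 0$ for every fixed $\Phi\in\mathbb{V}$, which is compatible with the boundary constraint since $\Phi$'s two components agree on $\partial\Omega$) gives $\|\mathbb{K}\mathbf{F}-\mathbb{K}_h\mathbf{F}\|_{\mathbb{V}}\to 0$ for each fixed $\mathbf{F}$; that is, $\mathbb{K}_h\to\mathbb{K}$ pointwise on $\mathbb{W}$.

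Next I would promote this to uniform convergence on the unit ball of $\mathbb{W}$. The key observation is that $\mathbb{K}$ maps the unit ball $B=\{\mathbf{F}\in\mathbb{W}:\|\mathbf{F}\|_{\mathbb{W}}\leq 1\}$ into a relatively compact subset $\mathcal{C}=\overline{\mathbb{K}(B)}$ of $\mathbb{V}$, because $\mathbb{K}:\mathbb{W}\to\mathbb{V}$ is bounded by (\ref{s2.17}) and $\mathbb{V}\hookrightarrow\mathbb{W}$ is compact — so $\mathbb{K}:\mathbb{W}\to\mathbb{W}$ is compact, and in fact one checks $\mathbb{K}:\mathbb{W}\to\mathbb{V}$ is compact as well (the image of $B$ in $\mathbb{V}$ is bounded, hence precompact in $\mathbb{W}$; one then uses that a bounded sequence in $\mathbb{V}$ has a weakly convergent subsequence whose limit lies in $\mathbb{V}$, and the continuity of $\mathbb{K}$ to see the convergence is strong in $\mathbb{V}$; alternatively invoke elliptic regularity giving $\mathbb{K}\mathbf{F}\in H^{1+r}$). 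Given a compact set $\mathcal{C}\subset\mathbb{V}$, finite-element approximation is uniform over $\mathcal{C}$: for $\varepsilon>0$ cover $\mathcal{C}$ by finitely many balls $B_{\mathbb{V}}(\Phi_i,\varepsilon)$, $i=1,\dots,N$; for each $i$ choose $h_0$ small enough that $\|\Phi_i-P_h\Phi_i\|_{\mathbb{V}}\lesssim \varepsilon$ for all $h\leq h_0$ (finitely many pointwise statements); then for arbitrary $\Phi\in\mathcal{C}$, pick $i$ with $\|\Phi-\Phi_i\|_{\mathbb{V}}<\varepsilon$ and estimate
\begin{equation*}
\|\Phi-P_h\Phi\|_{\mathbb{V}}\leq \|\Phi-\Phi_i\|_{\mathbb{V}}+\|P_h(\Phi-\Phi_i)\|_{\mathbb{V}}+\|\Phi_i-P_h\Phi_i\|_{\mathbb{V}}\lesssim \varepsilon,
\end{equation*}
using the uniform boundedness $\|P_h\|_{\mathcal{L}(\mathbb{V})}\lesssim 1$, which itself follows from the discrete inf-sup condition (\ref{s2.20}) and the continuity (\ref{s2.7}) of $a(\cdot,\cdot)$.

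Combining the two parts: for $h\leq h_0$,
\begin{equation*}
\|\mathbb{K}-\mathbb{K}_h\|_{\mathbb{V}}=\sup_{\|\mathbf{F}\|_{\mathbb{W}}\leq 1}\|(\mathbb{K}-P_h\mathbb{K})\mathbf{F}\|_{\mathbb{V}}=\sup_{\Phi\in\mathbb{K}(B)}\|\Phi-P_h\Phi\|_{\mathbb{V}}\leq\sup_{\Phi\in\mathcal{C}}\|\Phi-P_h\Phi\|_{\mathbb{V}}\lesssim\varepsilon,
\end{equation*}
which proves $\|\mathbb{K}_h-\mathbb{K}\|_{\mathbb{V}}\to 0$. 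The argument for $\|\mathbb{K}_{h*}-\mathbb{K}_*\|_{\mathbb{V}}\to 0$ is identical, working with the adjoint form $a(\Psi,\cdot)$, the second inf-sup condition in (\ref{s2.20}), the projection $P_h^*$ from (\ref{s2.29}), and the identity $\mathbb{K}_{h*}=P_h^*\mathbb{K}_*$; one uses that $\mathbb{K}_*:\mathbb{W}\to\mathbb{V}$ is likewise compact by (\ref{s2.18}) and the compact imbedding. I expect the main obstacle to be the precompactness-in-$\mathbb{V}$ step for $\mathbb{K}(B)$: because the full $H^{1+r}$ regularity estimate (\ref{s1.2}) is not available (as the authors emphasize in the introduction), one cannot simply quote a compact Sobolev embedding $H^{1+r}\hookrightarrow H^1$; instead the compactness of $\mathbb{K}:\mathbb{W}\to\mathbb{V}$ must be extracted from the compact imbedding $\mathbb{V}\hookrightarrow\mathbb{W}$ via a subsequence/weak-limit argument, carefully using the inf-sup stability of the continuous problem to identify the weak limit as a strong $\mathbb{V}$-limit. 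Everything else — the Céa estimate, the uniform bound on $P_h$, and the covering argument — is routine.
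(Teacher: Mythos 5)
Your proposal is correct and rests on the same mechanism as the paper's proof: a C\'ea-type quasi-optimality bound for the Galerkin projection $P_h$ (you derive it from the discrete inf-sup condition (\ref{s2.20}); the paper derives it directly from the $\mathbb{T}$-coercivity (\ref{s2.12}) --- equivalent routes here), pointwise convergence $\|\Psi-P_h\Psi\|_{\mathbb{V}}\to 0$ by density of $\mathbb{V}_h$, and then the upgrade to operator-norm convergence of $\mathbb{K}_h=P_h\mathbb{K}$ via compactness of $\mathbb{K}$ together with the uniform boundedness of $P_h$. The one genuine difference is the choice of operator norm. You prove convergence in $\mathcal{L}(\mathbb{W},\mathbb{V})$, taking the supremum over the $\mathbb{W}$-unit ball, which forces you to show that $\mathbb{K}:\mathbb{W}\to\mathbb{V}$ is compact; you correctly flag this as the delicate point, and your weak-limit idea does go through: if $\mathbf{F}_n\rightharpoonup\mathbf{F}$ in $\mathbb{W}$, then by (\ref{s2.13}) one has $\|\mathbb{K}(\mathbf{F}_n-\mathbf{F})\|_{\mathbb{V}}\lesssim\sup_{\|\Psi\|_{\mathbb{V}}=1}|b(\mathbf{F}_n-\mathbf{F},\Psi)|$, and this supremum tends to zero because $b$ is $\mathbb{W}$-bounded and the $\mathbb{V}$-unit ball of test functions is precompact in $\mathbb{W}$ (so weak convergence is uniform over it); this is cleaner than the subsequence sketch in your parenthesis, which as written is vague about how strong $\mathbb{V}$-convergence is recovered. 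The paper sidesteps this issue entirely: it reads $\|\mathbb{K}_h-\mathbb{K}\|_{\mathbb{V}}$ as the norm of operators on $\mathbb{V}$, where compactness of $\mathbb{K}:\mathbb{V}\to\mathbb{V}$ is immediate (the bounded map $\mathbb{K}:\mathbb{W}\to\mathbb{V}$ of (\ref{s2.17}) composed with the compact embedding $\mathbb{V}\hookrightarrow\mathbb{W}$), and this weaker statement is all that the Babu\v{s}ka--Osborn theory used afterwards requires. So your ``main obstacle'' simply does not arise in the paper's argument; your version buys the stronger $\mathbb{W}\to\mathbb{V}$ convergence (which indeed implies the paper's statement) at the price of an extra compactness lemma, while the covering and uniform-boundedness argument you spell out for the pointwise-to-uniform step is the standard one the paper invokes implicitly.
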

\begin{proof}
From $\mathbb{T}$-coercivity of $a(\cdot,\cdot)$ we deduce that
\begin{eqnarray*}
&&\|\Psi-P_{h}\Psi\|_{\mathbb{V}}^{2}\lesssim a(\Psi-P_{h}\Psi, \mathbb{T}(\Psi-P_{h}\Psi))\\
&&~~~=Ca(\Psi-P_{h}\Psi, \mathbb{T}(\Psi-\Phi_{h}))\\
&&~~~\lesssim\|\Psi-P_{h}\Psi\|_{\mathbb{V}}\|\Psi-\Phi_{h}\|_{\mathbb{V}} \quad\forall \Phi_{h}\in \mathbb{V}_{h},
\end{eqnarray*}
thus, from the interpolation estimate we obtain
\begin{eqnarray}\label{s2.30}
\|\Psi-P_{h}\Psi\|_{\mathbb{V}}\lesssim\inf\limits_{\Phi_{h}\in\mathbb{V}_{h}}\|\Psi-\Phi_{h}\|_{\mathbb{V}}\to 0~~(h\to 0)\quad\forall \Psi\in \mathbb{V}.
\end{eqnarray}
\noindent Since $\mathbb{K}$ is compact, $\mathbb{K}_{h}=P_{h}\mathbb{K}$ converges to $\mathbb{K}$ in norm $||\cdot||_{\mathbb{V}}$ as $h\to 0$.
Similarly, we have $\mathbb{K}_{h*}=P_{h}^{*}\mathbb{K}_{*}$ converges to $\mathbb{K}_{*}$ in norm $||\cdot||_{\mathbb{V}}$ as $h\to 0$.
\end{proof}
\indent Suppose that $\lambda$ and $\lambda_h$ are the $k$th eigenvalue of (\ref{s2.9}) and (\ref{s2.21}), respectively. Let $q$ and $\alpha$ be the algebraic multiplicity and the ascent of $\lambda$, respectively, $\lambda = \lambda_k = \lambda_{k+1} = \cdots = \lambda_{k+q-1}$. Let $M(\lambda)$ be the generalized eigenfunction space of (\ref{s2.9}) corresponding to $\lambda$ and $M_h(\lambda)$ be the direct sum of the generalized eigenfunctions corresponding to all eigenvalues of (\ref{s2.21}) that converge to $\lambda$.
And let $M^{*}(\lambda)$ be the generalized eigenfunction space of (\ref{s2.10}) corresponding to $\lambda^{*}$.
Let $\hat{M}(\lambda) = \{\Psi|\Psi\in M(\lambda), \|\Psi\|_{\mathbb{V}}=1\}$
and $\hat{M}^*(\lambda) = \{\Psi|\Psi\in M^*(\lambda), \|\Psi\|_{\mathbb{V}}=1\}$.
From Theorem 2.1 we know $\lambda^{*}=\lambda$, $M^{*}(\lambda)=M(\lambda)$ and $\hat{M}^*(\lambda)=\hat{M}(\lambda)$.\\
\indent Denote
\begin{equation*}
\delta_h(\lambda) = \sup_{\mathbf{U}\in \hat{M}(\lambda)}\inf_{\Psi_h\in\mathbb{V}_h} \|\mathbf{U}-\Psi_h\|_{\mathbb{V}},\,\delta_h^*(\lambda) = \sup_{\mathbf{U}\in \hat{M}^*(\lambda)}\inf_{\Psi_h\in\mathbb{V}_h} \|\mathbf{U}-\Psi_h\|_{\mathbb{V}}.
\end{equation*}
Then $\delta_h(\lambda) =\delta_h^{*}(\lambda)$.\\
\indent For two closed subspaces $A$ and $B$ of $\mathbb{V}$, we denote
\[
\hat{\Theta}(A,B) = \sup_{\Phi\in A,\|\Phi\|_{\mathbb{V}}=1}\inf_{\Psi\in B}\|\Phi-\Psi\|_{\mathbb{V}},\,
\]
and define the gap between $A$ and $B$ in norm $\|\cdot\|_{\mathbb{V}}$ as
\begin{equation}\label{s2.31}
\Theta(A,B) = \max\{\hat{\Theta}(A,B),\hat{\Theta}(B,A)\}.
\end{equation}
Similarly, we can define the gap $\Theta_{\mathbb{W}}(A,B)$ between two closed subspaces $A$ and $B$ of $\mathbb{W}$
in the sense of norm $\|\cdot\|_{\mathbb{W}}$.\\
\indent Since (\ref{s2.7}), (\ref{s2.8}), (\ref{s2.13}) and (\ref{s2.14}) hold and $\mathbb{V}\subset \mathbb{W}$ with a compact imbedding,
thanks to Theorems 8.1-8.4 in \cite{babuska1991} we obtain the following result.
\begin{theorem}
Assume that $\lambda$ is the $k$th eigenvalue of (\ref{s2.9}), $\hat{\lambda}_h=\frac{1}{q}\sum\limits_{j=k}^{k+q-1}\lambda_{j,h}$ denotes the arithmetic mean of $q$ discrete eigenvalues of (\ref{s2.21}) that converge to $\lambda$, and $h$ is small enough. Then
\begin{align}\label{s2.32}
& \Theta(M(\lambda),M_h(\lambda))\lesssim \delta_h(\lambda),\\\label{s2.33}
&|\lambda - \hat{\lambda}_h|\lesssim \delta_h(\lambda)^{2},\\\label{s2.34}
&|\lambda - \lambda_{j,h}|\lesssim \delta_h(\lambda)^{\frac{2}{\alpha}},~~~j=k,k+1,\cdots,k+q-1.
\end{align}\label{s2.35}
Assume $\mathbf{U}_h\in \mathrm{ker}(\mu_h-\mathbb{K}_h)^\rho$ with $\|\mathbf{U}_h\|_{\mathbb{V}}=1$ for some positive integer $\rho\leq\alpha$. Then for any integer $l$ with $\rho\leq l\leq\alpha$, there exists a generalized eigenfunction $\mathbf{U}$ of (\ref{s2.21}) such that
$(\mu-\mathbb{K})^{l}\mathbf{U}=0$ and
\begin{equation}
\|\mathbf{U}_{h} - \mathbf{U}\|_{\mathbb{V}}\lesssim \delta_h(\lambda)^{(l-\rho+1)/\alpha}.~~~~~~~~~~~~~~~~~~~~~~~~~~
\end{equation}
\end{theorem}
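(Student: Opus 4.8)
The plan is to read the statement as the specialization of the Babu\v{s}ka--Osborn spectral approximation theory (Theorems 8.1--8.4 in \cite{babuska1991}) to the operator pair $(\mathbb{K},\mathbb{K}_h)$; the work then splits into checking the abstract hypotheses and rewriting the abstract error quantities in terms of $\delta_h(\lambda)$. First I would pass to operator form: by the equivalence (\ref{s2.19}), $\mu:=1/\lambda$ is an eigenvalue of the compact operator $\mathbb{K}:\mathbb{V}\to\mathbb{V}$ with algebraic multiplicity $q$, ascent $\alpha$, and $M(\lambda)=\mathrm{ker}(\mu-\mathbb{K})^{\alpha}$, while by (\ref{s2.27}) the discrete eigenvalues $\lambda_{j,h}$ correspond to eigenvalues $\mu_{j,h}=1/\lambda_{j,h}$ of $\mathbb{K}_h$. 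The hypotheses of the abstract theory are all at hand: boundedness of $a(\cdot,\cdot)$ and $b(\cdot,\cdot)$ is (\ref{s2.7})--(\ref{s2.8}); the continuous and discrete inf--sup conditions are (\ref{s2.13})--(\ref{s2.14}) and (\ref{s2.20}); compactness of $\mathbb{K}$ was noted after (\ref{s2.18}); and the operator-norm convergence $\|\mathbb{K}_h-\mathbb{K}\|_{\mathbb{V}}\to0$ is Lemma 2.1. Using Theorem 2.1 to identify $\lambda^{*}=\lambda$, $M^{*}(\lambda)=M(\lambda)$ and $\delta_h^{*}(\lambda)=\delta_h(\lambda)$, these ingredients guarantee that for $h$ small there are exactly $q$ eigenvalues $\mu_{k,h},\dots,\mu_{k+q-1,h}$ of $\mathbb{K}_h$, counted with multiplicity, inside a fixed small disc about $\mu$, that $\dim M_h(\lambda)=q$, and that the associated Riesz spectral projectors converge in $\|\cdot\|_{\mathbb{V}}$.

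Next I would identify $\delta_h(\lambda)$ as the quantity that governs all the rates. Since $\mathbb{K}_h=P_h\mathbb{K}$ (and, dually, $\mathbb{K}_{h*}=P_h^{*}\mathbb{K}_{*}$) and $M(\lambda)$ is $\mathbb{K}$-invariant, for $\mathbf{U}\in M(\lambda)$ one has $(\mathbb{K}-\mathbb{K}_h)\mathbf{U}=(I-P_h)\mathbb{K}\mathbf{U}$ with $\mathbb{K}\mathbf{U}\in M(\lambda)$, so the quasi-optimality (\ref{s2.30}) and the definition of $\delta_h(\lambda)$ give
\[
\|(\mathbb{K}-\mathbb{K}_h)\mathbf{U}\|_{\mathbb{V}}\lesssim\inf_{\Psi_h\in\mathbb{V}_h}\|\mathbb{K}\mathbf{U}-\Psi_h\|_{\mathbb{V}}\leq\|\mathbb{K}\mathbf{U}\|_{\mathbb{V}}\,\delta_h(\lambda)\lesssim\|\mathbf{U}\|_{\mathbb{V}}\,\delta_h(\lambda),
\]
the last step using $\|\mathbb{K}|_{M(\lambda)}\|<\infty$. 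Hence $\varepsilon_h:=\|(\mathbb{K}-\mathbb{K}_h)|_{M(\lambda)}\|_{\mathbb{V}}\lesssim\delta_h(\lambda)$, and the same argument gives $\varepsilon_h^{*}:=\|(\mathbb{K}_{*}-\mathbb{K}_{h*})|_{M^{*}(\lambda)}\|_{\mathbb{V}}\lesssim\delta_h^{*}(\lambda)=\delta_h(\lambda)$. Feeding $\varepsilon_h\lesssim\delta_h(\lambda)$ into the abstract gap estimate yields $\hat{\Theta}(M(\lambda),M_h(\lambda))\lesssim\varepsilon_h\lesssim\delta_h(\lambda)$; the reverse inequality $\hat{\Theta}(M_h(\lambda),M(\lambda))\lesssim\delta_h(\lambda)$ follows from the norm convergence of the spectral projectors, and together they give (\ref{s2.32}). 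In the same way, the generalized-eigenvector perturbation part of the theory produces, for $\mathbf{U}_h\in\mathrm{ker}(\mu_h-\mathbb{K}_h)^{\rho}$ with $\|\mathbf{U}_h\|_{\mathbb{V}}=1$ and $\rho\leq l\leq\alpha$, a generalized eigenfunction $\mathbf{U}$ with $(\mu-\mathbb{K})^{l}\mathbf{U}=0$ and $\|\mathbf{U}_h-\mathbf{U}\|_{\mathbb{V}}\lesssim\varepsilon_h^{(l-\rho+1)/\alpha}\lesssim\delta_h(\lambda)^{(l-\rho+1)/\alpha}$, which is the final estimate of the theorem.

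The delicate point, and the only place where more than bookkeeping is needed, is the extra power of $\delta_h(\lambda)$ in (\ref{s2.33}) and in the exponent $2/\alpha$ of (\ref{s2.34}): it would ordinarily be furnished by an Aubin--Nitsche duality argument in the $\mathbb{W}$-norm, which is unavailable here because the regularity estimate (\ref{s1.2}) fails for anisotropic $A$. Instead I would extract it from the Galerkin orthogonality built into $P_h$ and $P_h^{*}$. Choose a basis $\{\phi_i\}_{i=1}^{q}$ of $M(\lambda)$ and the $b$-dual basis $\{\phi_j^{*}\}_{j=1}^{q}$ of $M^{*}(\lambda)=M(\lambda)$; the Babu\v{s}ka--Osborn eigenvalue formulas express $\lambda-\hat{\lambda}_h$ and $(\lambda-\lambda_{j,h})^{\alpha}$, up to a remainder of order $\varepsilon_h\varepsilon_h^{*}\lesssim\delta_h(\lambda)^{2}$, through duality pairings of the form $b\big((\mathbb{K}-\mathbb{K}_h)\phi_i,\phi_j^{*}\big)$. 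For each such pairing, writing $(\mathbb{K}-\mathbb{K}_h)\phi_i=(I-P_h)\mathbb{K}\phi_i$, using (\ref{s2.16}) in the form $b(\Phi,\phi_j^{*})=a(\Phi,\mathbb{K}_{*}\phi_j^{*})$, and then subtracting $P_h^{*}\mathbb{K}_{*}\phi_j^{*}\in\mathbb{V}_h$ (admissible by (\ref{s2.28})), one obtains
\[
b\big((\mathbb{K}-\mathbb{K}_h)\phi_i,\phi_j^{*}\big)=a\big((I-P_h)\mathbb{K}\phi_i,\,(I-P_h^{*})\mathbb{K}_{*}\phi_j^{*}\big),
\]
whose modulus is $\lesssim\|(I-P_h)\mathbb{K}\phi_i\|_{\mathbb{V}}\,\|(I-P_h^{*})\mathbb{K}_{*}\phi_j^{*}\|_{\mathbb{V}}\lesssim\delta_h(\lambda)^{2}$ by (\ref{s2.7}), (\ref{s2.30}) and $\mathbb{K}\phi_i,\mathbb{K}_{*}\phi_j^{*}\in M(\lambda)$, exactly as in the previous step. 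This forces $|\lambda-\hat{\lambda}_h|\lesssim\delta_h(\lambda)^{2}$, i.e., (\ref{s2.33}); taking $\alpha$-th roots and using $|\lambda-\lambda_{j,h}|=|\mu-\mu_{j,h}|\,|\lambda\lambda_{j,h}|$ with the $\lambda_{j,h}$ uniformly bounded then gives $|\lambda-\lambda_{j,h}|\lesssim\delta_h(\lambda)^{2/\alpha}$, i.e., (\ref{s2.34}). Collecting these with (\ref{s2.32}) and the eigenfunction estimate completes the argument; the one subtlety to keep in mind is verifying that the Babu\v{s}ka--Osborn remainders are genuinely $O(\varepsilon_h\varepsilon_h^{*})$ rather than only $O(\varepsilon_h)$, so that the squaring really persists.
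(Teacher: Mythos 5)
Your proposal is correct and follows essentially the same route as the paper: the paper proves Theorem 2.2 simply by verifying the hypotheses of the Babu\v{s}ka--Osborn theory (continuity (\ref{s2.7})--(\ref{s2.8}), the inf--sup conditions (\ref{s2.13})--(\ref{s2.14}) and (\ref{s2.20}), compactness, the norm convergence of Lemma 2.1, and $\delta_h^{*}(\lambda)=\delta_h(\lambda)$ via Theorem 2.1) and then citing Theorems 8.1--8.4 of \cite{babuska1991}. Your additional unpacking of those abstract theorems---bounding $\|(\mathbb{K}-\mathbb{K}_h)|_{M(\lambda)}\|_{\mathbb{V}}$ by $\delta_h(\lambda)$ through $\mathbb{K}_h=P_h\mathbb{K}$ and (\ref{s2.30}), and obtaining the squared rate from the pairing $a\big((I-P_h)\mathbb{K}\phi_i,(I-P_h^{*})\mathbb{K}_{*}\phi_j^{*}\big)$---is a faithful reconstruction of what the cited results deliver, not a different method.
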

\indent The above Theorem 2.2  was first proved by Xie and Wu for the transmission eigenvalue problem (\ref{s1.1}) (see Theorem 2 in \cite{xie2017}).
\begin{theorem}
Under the conditions of Theorem 2.2, there hold
\begin{eqnarray}\label{s2.36}
&&\Theta_{\mathbb{W}}(M(\lambda),M_{h}(\lambda))\lesssim \|\mathbb{K}_{*}-P_{h}^{*}\mathbb{K}_{*}\|_{\mathbb{V}}^{\frac{1}{2}} \delta_{h}(\lambda),\\\label{s2.37}
&&\|\mathbf{U}_{h}-\mathbf{U}\|_{\mathbb{W
}}\lesssim  (\|\mathbb{K}_{*}-P_{h}^{*}\mathbb{K}_{*}\|_{\mathbb{V}}^{\frac{1}{2}}\delta_{h}(\lambda))^{(l-\rho+1)/\alpha};
\end{eqnarray}
further let $\alpha=1$, then
\begin{eqnarray}\label{s2.38}
\|\mathbf{U}_{h}-\mathbf{U}\|_{\mathbb{W
}}\lesssim  \|\mathbb{K}_{*}-P_{h}^{*}\mathbb{K}_{*}\|_{\mathbb{V}}^{\frac{1}{2}}\|\mathbf{U}_{h}-\mathbf{U}\|_{\mathbb{V
}}.
\end{eqnarray}
\end{theorem}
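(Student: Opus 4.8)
The plan is to use a duality (Aubin--Nitsche) argument combined with the already-established $\mathbb{V}$-norm estimates from Theorem 2.2, exploiting the $\mathbb{T}$-coercivity and the self-adjointness relations collected above. The starting point is the identity characterizing the error in the $\mathbb{W}$-norm via the adjoint solution operator $\mathbb{K}_*$. Concretely, for $\Phi\in\mathbb{V}$ one has
\begin{equation*}
\|\Phi\|_{\mathbb{W}} \lesssim \sup_{0\neq\mathbf{G}\in\mathbb{W}}\frac{|b(\Phi,\mathbf{G})|}{\|\mathbf{G}\|_{\mathbb{W}}}
= \sup_{0\neq\mathbf{G}\in\mathbb{W}}\frac{|a(\Phi,\mathbb{K}_*\mathbf{G})|}{\|\mathbf{G}\|_{\mathbb{W}}},
\end{equation*}
using (\ref{s2.16}). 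I would first prove (\ref{s2.38}) (the case $\alpha=1$), since it is the cleanest and the general estimates (\ref{s2.36})--(\ref{s2.37}) follow the same template with bookkeeping on the ascent. For $\alpha=1$, write $\Phi=\mathbf{U}_h-\mathbf{U}$ with $\mathbf{U}$ an eigenfunction, $\mathbf{U}=\lambda\mathbb{K}\mathbf{U}$. Insert $P_h^*\mathbb{K}_*\mathbf{G}\in\mathbb{V}_h$ into $a(\Phi,\mathbb{K}_*\mathbf{G})$ and use the Galerkin-type orthogonality: from (\ref{s2.21}), (\ref{s2.9}) one gets, for $\Psi_h\in\mathbb{V}_h$,
\begin{equation*}
a(\mathbf{U}_h-\mathbf{U},\Psi_h) = \lambda_h b(\mathbf{U}_h,\Psi_h) - \lambda b(\mathbf{U},\Psi_h)
= \lambda b(\mathbf{U}_h-\mathbf{U},\Psi_h) + (\lambda_h-\lambda)b(\mathbf{U}_h,\Psi_h).
\end{equation*}
Hence
\begin{equation*}
a(\mathbf{U}_h-\mathbf{U},\mathbb{K}_*\mathbf{G}) = a(\mathbf{U}_h-\mathbf{U},\mathbb{K}_*\mathbf{G}-P_h^*\mathbb{K}_*\mathbf{G}) + \lambda b(\mathbf{U}_h-\mathbf{U},P_h^*\mathbb{K}_*\mathbf{G}) + (\lambda_h-\lambda)b(\mathbf{U}_h,P_h^*\mathbb{K}_*\mathbf{G}).
\end{equation*}
The first term is bounded by $\|\mathbf{U}_h-\mathbf{U}\|_{\mathbb{V}}\,\|\mathbb{K}_*\mathbf{G}-P_h^*\mathbb{K}_*\mathbf{G}\|_{\mathbb{V}}\lesssim \|\mathbb{K}_*-P_h^*\mathbb{K}_*\|_{\mathbb{V}}\,\|\mathbf{U}_h-\mathbf{U}\|_{\mathbb{V}}\,\|\mathbf{G}\|_{\mathbb{W}}$ via (\ref{s2.7}), (\ref{s2.17}); the second is controlled by (\ref{s2.8}) as $\lesssim \|\mathbf{U}_h-\mathbf{U}\|_{\mathbb{W}}\,\|\mathbf{G}\|_{\mathbb{W}}$; the third, using $|\lambda-\lambda_h|\lesssim\delta_h(\lambda)^2\lesssim\|\mathbb{K}_*-P_h^*\mathbb{K}_*\|_{\mathbb{V}}^{1/2}\delta_h(\lambda)$ for $h$ small (since $\delta_h(\lambda)\to0$), is absorbed into the right-hand side. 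Dividing by $\|\mathbf{G}\|_{\mathbb{W}}$, taking the supremum, and absorbing the $\|\mathbf{U}_h-\mathbf{U}\|_{\mathbb{W}}$ term on the right into the left (legitimate for $h$ small, since its coefficient is $\lesssim\|\mathbb{K}_*-P_h^*\mathbb{K}_*\|_{\mathbb{V}}^{1/2}\to0$) yields (\ref{s2.38}).

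For the gap estimate (\ref{s2.36}) and the generalized-eigenfunction estimate (\ref{s2.37}), I would run the same duality computation but track the ascent $\alpha$: replace the eigenrelation by the generalized-eigenfunction relations $(\mu-\mathbb{K})^l\mathbf{U}=0$ recalled before Theorem 2.1, iterate the insertion of $b(\cdot,\cdot)$ against generalized eigenfunctions (which costs powers of $\delta_h(\lambda)$ at each level), and invoke (\ref{s2.34}) for the eigenvalue error contributions. The combinatorial part — organizing the telescoping through the Jordan chain so that exactly $(l-\rho+1)/\alpha$ appears as the exponent — is where Theorem 2.2's bound (\ref{s2.35}) feeds in directly: one bounds the $\mathbb{V}$-norm factor by $\delta_h(\lambda)^{(l-\rho+1)/\alpha}$ and the duality step extracts the extra factor $\|\mathbb{K}_*-P_h^*\mathbb{K}_*\|_{\mathbb{V}}^{1/2}$. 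Finally, since by Theorem 2.1 and the surrounding discussion $M^*(\lambda)=M(\lambda)$ and $\delta_h^*(\lambda)=\delta_h(\lambda)$, there is no separate adjoint analysis to perform — the adjoint quantities appearing are just the primal ones.

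The main obstacle I anticipate is the bootstrap/absorption step: to turn the raw bound into the clean form (\ref{s2.38}), where the right-hand side is $\|\mathbb{K}_*-P_h^*\mathbb{K}_*\|_{\mathbb{V}}^{1/2}\|\mathbf{U}_h-\mathbf{U}\|_{\mathbb{V}}$ rather than $\|\mathbb{K}_*-P_h^*\mathbb{K}_*\|_{\mathbb{V}}^{1/2}\delta_h(\lambda)$, one must be careful that $\|\mathbf{U}_h-\mathbf{U}\|_{\mathbb{W}}$ never gets hidden inside a term that is only $O(\delta_h)$ rather than $o(1)$ relative to $\|\mathbf{U}_h-\mathbf{U}\|_{\mathbb{V}}$; this forces the choice of the particular generalized eigenfunction $\mathbf{U}$ (the one realizing the $\mathbb{V}$-estimate of Theorem 2.2) and the use of $|\lambda-\lambda_h|\lesssim\delta_h^2$ rather than the weaker $\delta_h^{2/\alpha}$. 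A secondary subtlety is that the duality exponent $1/2$ is exactly what one gets because only the projection error of $\mathbb{K}_*$ (not the square of it) enters; matching this against the general exponent $(l-\rho+1)/\alpha$ in (\ref{s2.37}) requires checking the two extreme cases $l=\rho$ and $l=\alpha$ by hand.
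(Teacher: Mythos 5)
Your plan shares the paper's main ingredients (duality with $\mathbb{K}_{*}$, $\mathbb{T}$-coercivity, Theorem~2.1 to avoid a separate adjoint analysis), but the way you run the duality \emph{directly on the eigenfunction error} breaks down at the absorption step. From your identity
\begin{equation*}
b(\mathbf{U}_h-\mathbf{U},\mathbf{G})
= a\bigl(\mathbf{U}_h-\mathbf{U},(\mathbb{I}-P_h^{*})\mathbb{K}_{*}\mathbf{G}\bigr)
+ \lambda\, b\bigl(\mathbf{U}_h-\mathbf{U},P_h^{*}\mathbb{K}_{*}\mathbf{G}\bigr)
+ (\lambda_h-\lambda)\, b\bigl(\mathbf{U}_h,P_h^{*}\mathbb{K}_{*}\mathbf{G}\bigr),
\end{equation*}
the middle term is only bounded by $C\,|\lambda|\,\|\mathbf{U}_h-\mathbf{U}\|_{\mathbb{W}}\|\mathbf{G}\|_{\mathbb{W}}$, where $C$ involves the continuity constant of $b$ and the uniform stability of $P_h^{*}\mathbb{K}_{*}$; this coefficient is $O(1)$, not $o(1)$, so your claim that it is ``$\lesssim\|\mathbb{K}_{*}-P_h^{*}\mathbb{K}_{*}\|_{\mathbb{V}}^{1/2}\to0$'' is unsupported and the term cannot be absorbed into the left-hand side. (The same obstruction persists even if you test with $\mathbf{G}=\mathbb{T}(\mathbf{U}_h-\mathbf{U})$.) A second gap: bounding the first term for \emph{arbitrary} $\mathbf{G}\in\mathbb{W}$ requires the $\mathbb{W}\to\mathbb{V}$ operator norm of $(\mathbb{I}-P_h^{*})\mathbb{K}_{*}$, which is neither dominated by the $\mathbb{V}\to\mathbb{V}$ norm appearing in (\ref{s2.36})--(\ref{s2.38}) nor controllable without precisely the regularity estimate (\ref{s1.2}) that the paper says is unavailable; a symptom that the bookkeeping is off is that, were your bounds valid, you would obtain the exponent $1$ instead of $1/2$, i.e.\ a statement stronger than the theorem. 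Finally, your treatment of (\ref{s2.36})--(\ref{s2.37}) via ``telescoping through the Jordan chain'' is only a sketch and inherits the same absorption problem.

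The paper sidesteps both difficulties by performing the duality at the level of the \emph{source} problem, where Galerkin orthogonality is exact: it first proves that $b$ is $\mathbb{T}$-coercive on $\mathbb{W}$, then for $e=(\mathbb{I}-P_h)\mathbb{K}\mathbf{F}$ writes $C_b\|e\|_{\mathbb{W}}^{2}\le|b(e,\mathbb{T}e)|=|a(e,(\mathbb{I}-P_h^{*})\mathbb{K}_{*}\mathbb{T}e)|\lesssim\|\mathbb{K}_{*}-P_h^{*}\mathbb{K}_{*}\|_{\mathbb{V}}\|e\|_{\mathbb{V}}^{2}$; no eigenvalue terms appear, the test function $\mathbb{T}e$ lies in $\mathbb{V}$ so only the $\mathbb{V}$ operator norm is needed, and the exponent $1/2$ arises from the quadratic left-hand side. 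This operator estimate (the paper's (\ref{s2.40})) then yields (\ref{s2.36})--(\ref{s2.38}) by applying the Babu\v{s}ka--Osborn spectral approximation theorems in $\mathbb{W}$, which also handles the generalized eigenfunctions and the gap without any Jordan-chain combinatorics. To repair your proof you would essentially have to replace the eigenfunction-level duality by this operator-level one.
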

\begin{proof}
From (\ref{s2.6}), (\ref{s2.1}) and Young's inequality, we deduce
 \begin{eqnarray*}
 &&|b(\Psi, \mathbb{T}\Psi)|
 =|(n(x)\varphi,\varphi )_{0}
-(\mathbf{\varphi}, 2\psi-\varphi)_{0}|\nonumber\\
&&~~~\geq\gamma\|\varphi\|_{0}^{2} +\|\psi\|_{0}^{2}
-2|(\psi,\varphi)_{0}|\nonumber\\
&&~~~\geq (\gamma-\delta^{-1})\|\varphi\|_{0}^{2}+(1-\delta)\|\psi\|_{0}^{2}.
 \end{eqnarray*}
Then we can choose $\delta\in(1/\gamma, 1)$ and $C_{b}=\min\{(\gamma-\delta^{-1}), (1-\delta)\}>0$ such that
\begin{eqnarray}\label{s2.39}
|b(\Psi, \mathbb{T}\Psi)|\geq C_{b}\|\Psi\|_{\mathbb{W}}^{2}\quad\forall \Psi\in \mathbb{W},
\end{eqnarray}
which means $b(\cdot,\cdot)$ is  $\mathbb{T}$-coercive on $\mathbb{W}$.\\
\indent For any $\mathbf{F}\in \mathbb{W}$,
from (\ref{s2.39}), (\ref{s2.16}) and (\ref{s2.29}) we deduce
\begin{eqnarray*}
&&C_{b}\|\mathbb{K}\mathbf{F}-P_{h}\mathbb{K}\mathbf{F}\|_{\mathbb{W}}^{2}\leq | b(\mathbb{K}\mathbf{F}-P_{h}\mathbb{K}\mathbf{f}, \mathbb{T}(\mathbb{K}\mathbf{F}-P_{h}\mathbb{K}\mathbf{F}))|\nonumber\\
&&~~~=|a(\mathbb{K}\mathbf{F}-P_{h}\mathbb{K}\mathbf{F}, \mathbb{K}_{*}\mathbb{T}(\mathbb{K}\mathbf{F}-P_{h}\mathbb{K}\mathbf{F}))|\nonumber\\
&&~~~=|a(\mathbb{K}\mathbf{F}-P_{h}\mathbb{K}\mathbf{F}, \mathbb{K}_{*}\mathbb{T}(\mathbb{K}\mathbf{F}-P_{h}\mathbb{K}\mathbf{F})-P_{h}^{*}\mathbb{K}_{*}\mathbb{T}(\mathbb{K}\mathbf{F}-P_{h}\mathbb{K}\mathbf{F}))|\nonumber\\
&&~~~\lesssim \|\mathbb{K}_{*}-P_{h}^{*}\mathbb{K}_{*}\|_{\mathbb{V}}\|\mathbb{K}\mathbf{F}-P_{h}\mathbb{K}\mathbf{F}\|_{\mathbb{V}}^{2},
\end{eqnarray*}
then,
\begin{eqnarray}\label{s2.40}
\|\mathbb{K}\mathbf{F}-P_{h}\mathbb{K}\mathbf{F}\|_{\mathbb{W}}\lesssim \|\mathbb{K}_{*}-P_{h}^{*}\mathbb{K}_{*}\|_{\mathbb{V}}^{\frac{1}{2}}\|\mathbb{K}\mathbf{F}-P_{h}\mathbb{K}\mathbf{F}\|_{\mathbb{V}}.
\end{eqnarray}
\indent From (\ref{s2.40}), (\ref{s2.17}) and (\ref{s2.25}) we deduce
\begin{eqnarray}\label{s2.41}
&&\|\mathbb{K}-P_{h}\mathbb{K}\|_{\mathbb{W}}=\sup\limits_{\mathbf{F}\in \mathbb{W}, \|\mathbf{F}\|_{\mathbb{W}}=1}
\|\mathbb{K}\mathbf{F}-P_{h}\mathbb{K}\mathbf{F}\|_{\mathbb{W}}\nonumber\\
&&~~~\lesssim \|\mathbb{K}_{*}-P_{h}^{*}\mathbb{K}_{*}\|_{\mathbb{V}}^{\frac{1}{2}}\sup\limits_{\mathbf{F}\in \mathbb{W}, \|\mathbf{F}\|_{\mathbb{W}}=1}\|\mathbb{K}\mathbf{F}-P_{h}\mathbb{K}\mathbf{F}\|_{\mathbb{V}}\nonumber\\
&&~~~\lesssim \|\mathbb{K}_{*}-P_{h}^{*}\mathbb{K}_{*}\|_{\mathbb{V}}^{\frac{1}{2}}\to 0~~as~~h\to 0.
\end{eqnarray}
Thus from Theorem 7.1
in \cite{babuska1991}, (\ref{s2.17}), (\ref{s2.40}) and (\ref{s2.30}) we derive (\ref{s2.36}) as follows:
\begin{eqnarray*}
&&{\Theta}_{\mathbb{W}}(M(\lambda),M_{h}(\lambda))\lesssim \|(\mathbb{K}-P_{h}\mathbb{K})|_{M(\lambda)}\|_{\mathbb{W}}\\
&&~~~=\sup\limits_{\Psi\in M(\lambda)}\frac{\|(\mathbb{K}-P_{h}\mathbb{K})\Psi\|_{\mathbb{W}}}{\|\Psi\|_{\mathbb{\mathbb{W}}}}\lesssim
\sup\limits_{\Psi\in M(\lambda)}\frac{\|(\mathbb{K}-P_{h}\mathbb{K})\Psi\|_{\mathbb{W}}}{\|\mathbb{K}\Psi\|_{\mathbb{\mathbb{V}}}}\nonumber\\
&&~~~\lesssim\|\mathbb{K}_{*}-P_{h}^{*}\mathbb{K}_{*}\|_{\mathbb{V}}^{\frac{1}{2}}
\sup\limits_{\Psi\in M(\lambda)}\frac{\|(\mathbb{I}-P_{h})\mathbb{K}\Psi\|_{\mathbb{V}}}{\|\mathbb{K}\Psi\|_{\mathbb{\mathbb{V}}}}\nonumber\\
&&~~~\lesssim \|\mathbb{K}_{*}-P_{h}^{*}\mathbb{K}_{*}\|_{\mathbb{V}}^{\frac{1}{2}}\delta_{h}(\lambda),
\end{eqnarray*}
where $\mathbb{I}$ is an identity operator. \\ 
\indent From Theorem 7.4 in \cite{babuska1991}
 and (\ref{s2.40}) we obtain (\ref{s2.37}).\\
\indent When $\alpha=1$, by the spectral approximation theory we have
\begin{eqnarray*}
\|\mathbb{U}_{h}-\mathbb{U}\|_{\mathbb{W
}}\lesssim  \|(\mathbb{K}-P_{h}\mathbb{K})\mathbb{U}\|_{\mathbb{W}}.
\end{eqnarray*}
Thus, from (\ref{s2.40}) and (\ref{s2.30}) we get (\ref{s2.38}).
The proof is completed.
\end{proof}

\begin{remark}
We define another function spaces as follows.
$$ \tilde{\mathbb{V}} = \bigg\{\Psi=(\psi,\varphi)\in \mathbb{V}:\, \int_{\Omega}(n\psi-\varphi)dx = 0 \bigg\},$$
\begin{equation*}
\tilde{\mathbb{V}}_h=\left\{(\psi_h,\varphi_h)\in \mathbb{V}_h: \int_{\Omega}(n\psi_h-\varphi_h)dx = 0 \right\}.
\end{equation*}

From \cite{dhia2011}, let $\lambda=\mathrm{k}^2$, then the variational formulation of (\ref{s1.1}) is as follows: Find $\lambda\in \mathbb{C}$ and $U \in \tilde{\mathbb{V}}$, $U \neq 0$, such that
\begin{equation}\label{s2.42}
\tilde{a}(U ,\Psi) = \lambda b(U ,\Psi)\quad \forall \Psi\in\tilde{\mathbb{V}},
\end{equation}
where
$
\tilde{a}(U ,\Psi) = (A\nabla u,\nabla \psi) - (\nabla v,\nabla \varphi).
$
It is clear that the eigenpair of (\ref{s2.42}) is the same with the one of its dual problem.

\indent The finite element approximation of  (\ref{s2.42}) is to find $\lambda_{h}=\mathrm{k}^2_{h}\in \mathbb{C}$, $U_h \in \tilde{\mathbb{V}}_{h}$, $U_{h} \neq 0$, such that
\begin{align}\label{s2.43}
\tilde{a}(U_h ,\Psi_h )=\lambda_{h} b(U_h ,\Psi_h )\quad \forall \Psi_h \in \tilde{\mathbb{V}}_{h}.
\end{align}

Referring to \cite{yang2022}, we can prove that $(\lambda, \mathbf{U})$
 is the eigenpair of (\ref{s1.1}) and $\lambda\not=0$ if and only if $(\lambda, \mathbf{U})$ is the eigenpair of (\ref{s2.42}) and that  $(\lambda_{h}, U_h)\in \mathbb{C}\times \mathbb{V}_{h}$, $\lambda_{h}\neq 0$, satisfying
\begin{align}\label{s2.44}
\tilde{a}(U_h ,\Psi_h )=\lambda_{h} b(U_h ,\Psi_h )\quad \forall \Psi_h \in \mathbb{V}_{h},
\end{align}
 if and only if $(\lambda_h, \mathbf{U}_h)$ is the eigenpair of (\ref{s2.43}).
Moreover, if $\int_{\Omega} (n-1)d x\not= 0$, then Theorem 2.2 holds. Further, assume that $n>n_{\mathrm{min}}>1$ or $n<n_{\max}<1$, then Theorem 2.3 holds.
\end{remark}

\section{A posteriori error estimation}\label{sec:sec3}
\subsection{The a posteriori error estimators and their reliability for eigenpair}
\indent Let $\mathcal{E}_h$ denote the set of all faces (or edges when $d=2$) in the mesh. We decompose $\mathcal{E}_h$
into disjoint sets $\mathcal{E}_h^i$ and $\mathcal{E}_h^b$ which consists of inner faces and those on the boundary, respectively.
Let $\kappa$ denote the element and $e$ the face of element in the mesh.
For $e\in\mathcal{E}_h^i$ which is the common side of elements ${\kappa}^+$ and ${\kappa}^-$ with unit outward normal $\nu^+$ and $\nu^-$, respectively, we fix $\nu = \nu^-$ on the boundary $\mathcal{E}_h^b$. Let $\mathbf{U}_h=(w_h,v_h)$ be the solution of (\ref{s2.21}), define the jumps
$[\![ A\nabla w_h]\!]$ and $[\![ \nabla v_h]\!]$ across $e\in \mathcal{E}_h^i$
 as follows:
\begin{align*}
[\![ A\nabla w_h]\!] &= A\nabla w_h|{\kappa}^+\cdot\nu^+ + A\nabla w_h|{\kappa}^-\cdot\nu^-, \\
[\![ \nabla v_h]\!] &= \nabla v_h|{\kappa}^+\cdot\nu^+ + \nabla v_h|{\kappa}^-\cdot\nu^-.
\end{align*}
\indent For any ${\kappa}\in\mathcal{T}_h$, we define the local error estimator $\eta_{\kappa}$ as
\begin{equation}\label{s3.1}
\eta_{\kappa}(\mathbf{U}_h) =\bigg( h^2_{\kappa}R_{\kappa}(\mathbf{U}_h) + \frac12\sum_{e\subset (\partial {\kappa}\cap \mathcal{E}_h^i)}h_e J_{e}^i(\mathbf{U}_h) + \sum_{e\subset (\partial {\kappa}\cap \mathcal{E}_h^b)}h_eJ_{e}^b(\mathbf{U}_h)\bigg)^{1/2},
\end{equation}
where
\begin{align*}
& \, R_{\kappa}(\mathbf{U}_h) = \|\nabla\cdot(A\nabla w_h) + (\lambda-1)n(x)w_h\|_{0,{\kappa}}^2 +\|\Delta v_h + (\lambda-1)v_h\|_{0,{\kappa}}^2,\\
& \, J_{e}^i(\mathbf{U}_h) = \|[\![ A\nabla w_h]\!]\|^2_{0,e}+\|[\![ \nabla v_h]\!]\|^2_{0,e}\quad \mathrm{for}\,e\in\mathcal{E}_h^i,\\
& \, J_{e}^b(\mathbf{U}_h) = \|(A\nabla w_h-\nabla v_h)\cdot\nu\|^2_{0,e} \quad \mathrm{for}\,e\in\mathcal{E}_h^b.
\end{align*}
The global error estimator is then given by
\begin{equation}\label{s3.2}
\eta(\mathbf{U}_h) = \bigg(\sum_{{\kappa}\in\mathcal{T}_h}\eta_{\kappa}(\mathbf{U}_h)^{2}\bigg)^{1/2}.
\end{equation}
\indent Let $\Pi_h:\,H^1(\Omega)\to S^h$ be the Scott-Zhang interpolation operator, then, from \cite{scott1990} we know that there hold the local interpolation estimates: for any $\psi\in H^1(\Omega)$,
\begin{align}\label{s3.3}
 & \, \|\psi-\Pi_h\psi\|_{0,{\kappa}} \lesssim h_{{\kappa}}\|\nabla\psi\|_{0,\tilde{\omega}_{\kappa}}\quad\forall {\kappa}\in \mathcal{T}_{h},\\\label{s3.4}
 & \, \|\psi-\Pi_h\psi\|_{0,e} \lesssim h_{e}^{1/2}\|\nabla\psi\|_{0,\tilde{\omega}_e}\quad\forall e\in \mathcal{E}_{h}.
\end{align}
where $\tilde{\omega}_{\kappa}$ and $\tilde{\omega}_e$ denote the set of all elements that share at least a vertex with ${\kappa}$ and $e$, respectively.\\
\indent We extend Theorem 1.5.2 in \cite{shi2013} to the following a posteriori error formulas.
\begin{lemma}
Let $(\lambda,\mathbf{U})$  be the eigenpair of  (\ref{s2.9}) and $(\lambda_{h},\mathbf{U}_h)$ be the eigenpair of (\ref{s2.21}). Then
\begin{eqnarray}\label{s3.5}
&&\|\mathbf{U}-\mathbf{U}_h\|_{\mathbb{V}}
\lesssim
\sup_{0\neq \Psi\in \mathbb{V}} \frac{|b(\lambda_h \mathbf{U}_h,\Psi) - a(\mathbf{U}_h,\Psi)|}{\|\Psi\|_{\mathbb{V}} }
+ \|\lambda \mathbf{U}-\lambda_h \mathbf{U}_h\|_{\mathbb{W}},\\\label{s3.6}
&&\|\mathbf{U}-\mathbf{U}_h\|_{\mathbb{V}}
\gtrsim
\sup_{0\neq \Psi\in \mathbb{V}} \frac{|b(\lambda_h \mathbf{U}_h,\Psi) - a(\mathbf{U}_h,\Psi)|}{\|\Psi\|_{\mathbb{V}} }- \|\lambda \mathbf{U}-\lambda_h \mathbf{U}_h\|_{\mathbb{W}}.
\end{eqnarray}
\end{lemma}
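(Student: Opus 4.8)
The plan is to exploit the $\mathbb{T}$-coercivity of $a(\cdot,\cdot)$ on $\mathbb{V}$ (equivalently, the inf-sup condition \eqref{s2.13}) to control $\|\mathbf{U}-\mathbf{U}_h\|_{\mathbb{V}}$ by a dual norm of the residual functional $\Psi\mapsto a(\mathbf{U}-\mathbf{U}_h,\Psi)$, and then to rewrite that residual using the two eigenvalue equations \eqref{s2.9} and \eqref{s2.21}. First I would start from the inf-sup bound: since $\mathbf{U}-\mathbf{U}_h\in\mathbb{V}$, \eqref{s2.13} gives
\begin{equation*}
\|\mathbf{U}-\mathbf{U}_h\|_{\mathbb{V}}\lesssim \sup_{0\neq\Psi\in\mathbb{V}}\frac{|a(\mathbf{U}-\mathbf{U}_h,\Psi)|}{\|\Psi\|_{\mathbb{V}}}.
\end{equation*}
Now use $a(\mathbf{U},\Psi)=\lambda b(\mathbf{U},\Psi)$ from \eqref{s2.9} to substitute for the exact part:
\begin{equation*}
a(\mathbf{U}-\mathbf{U}_h,\Psi)=\lambda b(\mathbf{U},\Psi)-a(\mathbf{U}_h,\Psi)
=\bigl(\lambda b(\mathbf{U},\Psi)-\lambda_h b(\mathbf{U}_h,\Psi)\bigr)+\bigl(\lambda_h b(\mathbf{U}_h,\Psi)-a(\mathbf{U}_h,\Psi)\bigr).
\end{equation*}
The second bracket is exactly the computable residual appearing in \eqref{s3.5}. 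For the first bracket, I would use the boundedness \eqref{s2.8} of $b(\cdot,\cdot)$ on $\mathbb{W}$ together with the identity $\lambda b(\mathbf{U},\Psi)-\lambda_h b(\mathbf{U}_h,\Psi)=b(\lambda\mathbf{U}-\lambda_h\mathbf{U}_h,\Psi)$, whence $|b(\lambda\mathbf{U}-\lambda_h\mathbf{U}_h,\Psi)|\lesssim\|\lambda\mathbf{U}-\lambda_h\mathbf{U}_h\|_{\mathbb{W}}\|\Psi\|_{\mathbb{W}}\lesssim\|\lambda\mathbf{U}-\lambda_h\mathbf{U}_h\|_{\mathbb{W}}\|\Psi\|_{\mathbb{V}}$, using the continuous imbedding $\mathbb{V}\subset\mathbb{W}$. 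Dividing by $\|\Psi\|_{\mathbb{V}}$ and taking the supremum yields \eqref{s3.5}.

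For the lower bound \eqref{s3.6} I would run the triangle inequality in the opposite direction: from the same algebraic identity,
\begin{equation*}
|b(\lambda_h\mathbf{U}_h,\Psi)-a(\mathbf{U}_h,\Psi)|\leq |a(\mathbf{U}-\mathbf{U}_h,\Psi)|+|b(\lambda\mathbf{U}-\lambda_h\mathbf{U}_h,\Psi)|
\lesssim \|\mathbf{U}-\mathbf{U}_h\|_{\mathbb{V}}\|\Psi\|_{\mathbb{V}}+\|\lambda\mathbf{U}-\lambda_h\mathbf{U}_h\|_{\mathbb{W}}\|\Psi\|_{\mathbb{V}},
\end{equation*}
using the continuity \eqref{s2.7} of $a(\cdot,\cdot)$ for the first term. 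Dividing by $\|\Psi\|_{\mathbb{V}}$, taking the supremum over $\Psi$, and rearranging gives \eqref{s3.6}. This direction is essentially trivial once the algebraic decomposition is in place.

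I do not expect a serious obstacle here; the statement is a fairly direct ``residual equation plus inf-sup'' argument in the spirit of Theorem 1.5.2 of \cite{shi2013}, and the two terms on the right are precisely the ones that decouple the computable part (the residual, which will later be bounded by the estimator $\eta(\mathbf{U}_h)$ via the Scott-Zhang interpolant and the estimates \eqref{s3.3}--\eqref{s3.4}) from the higher-order perturbation $\|\lambda\mathbf{U}-\lambda_h\mathbf{U}_h\|_{\mathbb{W}}$ (controlled by Theorem 2.3). The only points requiring a little care are: making sure the hidden constants in $\lesssim$/$\gtrsim$ are mesh-independent, which follows because \eqref{s2.13}, \eqref{s2.7}, \eqref{s2.8} and the imbedding constant are all fixed; and noting that since $b$ extends continuously to $\mathbb{W}$, the term $b(\lambda\mathbf{U}-\lambda_h\mathbf{U}_h,\Psi)$ is legitimately estimated in the $\mathbb{W}$-norm even though $\Psi$ ranges over $\mathbb{V}$. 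Both are immediate given the results already established in Section~\ref{sec:sec2}.
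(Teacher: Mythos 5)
Your proposal is correct and follows essentially the same route as the paper: the inf-sup bound \eqref{s2.13} applied to $\mathbf{U}-\mathbf{U}_h$, the decomposition of $a(\mathbf{U}-\mathbf{U}_h,\Psi)$ into the computable residual plus $b(\lambda\mathbf{U}-\lambda_h\mathbf{U}_h,\Psi)$ bounded via \eqref{s2.8}, and the reverse triangle-inequality argument with the continuity \eqref{s2.7} for \eqref{s3.6}. No gaps.
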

\begin{proof}
Using the inf-sup condition (\ref{s2.13}), we derive
\begin{equation}\label{s3.7}
\mu_a\|\mathbf{U}-\mathbf{U}_h\|_{\mathbb{V}}
\leq \sup_{0\neq \Psi\in \mathbb{V}} \frac{|a(\mathbf{U}-\mathbf{U}_h,\Psi)|}{\|\Psi\|_{\mathbb{V}} }.
\end{equation}
From (\ref{s2.9}) and (\ref{s2.8}), we deduce
\begin{eqnarray}\label{s3.8}
&&|a(\mathbf{U}-\mathbf{U}_h,\Psi)| =| b(\lambda \mathbf{U},\Psi) - a(\mathbf{U}_h,\Psi)|\nonumber\\
&&~~~=|b(\lambda_h \mathbf{U}_h,\Psi) + b(\lambda \mathbf{U}-\lambda_h \mathbf{U}_h,\Psi)- a(\mathbf{U}_h,\Psi)|\nonumber\\
&&~~~\leq |b(\lambda_h \mathbf{U}_h,\Psi)- a(\mathbf{U}_h,\Psi)| + C\|\lambda \mathbf{U}-\lambda_h \mathbf{U}_h\|_{\mathbb{W}}\|\Psi\|_{\mathbb{W}}.
\end{eqnarray}
Substituting (\ref{s3.8}) into (\ref{s3.7}), we obtain (\ref{s3.5}).\\
On the other hand,
\begin{eqnarray*}
&& |b(\lambda_h \mathbf{U}_h,\Psi) - a(\mathbf{U}_h,\Psi)|
=  |b(\lambda_h \mathbf{U}_h - \lambda \mathbf{U},\Psi) + a(\mathbf{U} - \mathbf{U}_h,\Psi)|\\
&&~~~\lesssim \|\lambda_h \mathbf{U}_h - \lambda \mathbf{U}\|_{\mathbb{W}}\|\Psi\|_{\mathbb{W}}+\|\mathbf{U} - \mathbf{U}_h\|_{\mathbb{V}}\|\Psi\|_{\mathbb{V}}.
\end{eqnarray*}
From the above estimate we immediately get (\ref{s3.6}).
\end{proof}
\begin{theorem}
Let $(\lambda,\mathbf{U})$ and $(\lambda_h,\mathbf{U}_h)$ be the eigenpairs  of  (\ref{s2.9}) and (\ref{s2.21}), respectively. Then
\begin{equation}\label{s3.9}
\|\mathbf{U}-\mathbf{U}_h\|_{\mathbb{V}}
\lesssim  \eta(\mathbf{U}_h)
+ \|\lambda \mathbf{U}-\lambda_h \mathbf{U}_h\|_{\mathbb{W}}.
\end{equation}
\end{theorem}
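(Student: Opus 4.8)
The plan is to combine the a posteriori error formula of Lemma~3.1 with a standard residual-based upper bound for the dual norm appearing on its right-hand side. By (\ref{s3.5}) it suffices to prove
\[
\sup_{0\neq\Psi\in\mathbb{V}}\frac{|b(\lambda_h\mathbf{U}_h,\Psi)-a(\mathbf{U}_h,\Psi)|}{\|\Psi\|_{\mathbb{V}}}\lesssim\eta(\mathbf{U}_h),
\]
since the term $\|\lambda\mathbf{U}-\lambda_h\mathbf{U}_h\|_{\mathbb{W}}$ is already carried along in (\ref{s3.9}). So fix $\Psi=(\varphi,\psi)\in\mathbb{V}$ with $\|\Psi\|_{\mathbb{V}}=1$ and expand the residual. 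Using the definitions (\ref{s2.5})--(\ref{s2.6}) and writing $\lambda_h-1=\mathrm{k}_h^2$, the quantity $b(\lambda_h\mathbf{U}_h,\Psi)-a(\mathbf{U}_h,\Psi)$ equals
\[
\big((\lambda_h-1)n(x)w_h-n(x)w_h, \varphi\big)_0 + \ldots = -\big(A\nabla w_h,\nabla\varphi\big)_0+\big((\lambda_h-1)n(x)w_h,\varphi\big)_0+\big(\nabla v_h,\nabla\psi\big)_0-\big((\lambda_h-1)v_h,\psi\big)_0,
\]
i.e. exactly the weak residual of the two PDEs $-\nabla\cdot(A\nabla w_h)+n(x)w_h=\lambda_h n(x)w_h$ and $-\triangle v_h+v_h=\lambda_h v_h$ tested against $\varphi$ and $-\psi$ respectively, with the natural boundary contribution $\big((A\nabla w_h-\nabla v_h)\cdot\nu,\varphi\big)_{\partial\Omega}$ coming from the fact that $\varphi-\psi\in H_0^1(\Omega)$ so $\varphi$ and $\psi$ have the same trace on $\partial\Omega$.

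The key step is Galerkin orthogonality followed by elementwise integration by parts. Because $\mathbf{U}_h$ solves (\ref{s2.21}), the residual vanishes against any $\Psi_h\in\mathbb{V}_h$; in particular I subtract $(\Pi_h\varphi,\Pi_h\psi)$ — which lies in $\mathbb{V}_h$ since $\Pi_h$ preserves traces and hence $\Pi_h\varphi-\Pi_h\psi\in H_0^1(\Omega)$ — and then replace $\varphi,\psi$ by $\varphi-\Pi_h\varphi,\psi-\Pi_h\psi$ throughout. Now break each volume integral into a sum over $\kappa\in\mathcal{T}_h$ and integrate by parts on each element: this produces the element residuals $\nabla\cdot(A\nabla w_h)+(\lambda_h-1)n(x)w_h$ and $\triangle v_h+(\lambda_h-1)v_h$ tested against $\varphi-\Pi_h\varphi$ on $\kappa$, together with face terms. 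On interior faces the face terms collapse into the jumps $[\![A\nabla w_h]\!]$ and $[\![\nabla v_h]\!]$ tested against $\varphi-\Pi_h\varphi$, $\psi-\Pi_h\psi$; on boundary faces, since $\varphi-\Pi_h\varphi=\psi-\Pi_h\psi$ on $\partial\Omega$, the two boundary contributions combine into $\big((A\nabla w_h-\nabla v_h)\cdot\nu,\varphi-\Pi_h\varphi\big)_e$, which is exactly $J_e^b$. Then apply Cauchy--Schwarz on each $\kappa$ and each $e$, insert the Scott--Zhang local estimates (\ref{s3.3})--(\ref{s3.4}) to bound $\|\varphi-\Pi_h\varphi\|_{0,\kappa}\lesssim h_\kappa\|\nabla\varphi\|_{0,\tilde\omega_\kappa}$ and $\|\varphi-\Pi_h\varphi\|_{0,e}\lesssim h_e^{1/2}\|\nabla\varphi\|_{0,\tilde\omega_e}$ (and likewise for $\psi$), and finish with a discrete Cauchy--Schwarz over elements and faces, using the finite overlap of the patches $\tilde\omega_\kappa,\tilde\omega_e$ to absorb the $\|\nabla\varphi\|_{0,\tilde\omega}$, $\|\nabla\psi\|_{0,\tilde\omega}$ factors into $\|\Psi\|_{\mathbb{V}}=1$. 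Collecting the weights $h_\kappa^2$, $h_e$ reproduces precisely $\eta(\mathbf{U}_h)^2$ from (\ref{s3.1})--(\ref{s3.2}).

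The main obstacle, and the only genuinely non-routine point, is the correct handling of the boundary faces: one must verify that the conforming space $\mathbb{V}_h$ and the interpolant are compatible with the constraint $\varphi-\psi\in H_0^1(\Omega)$, so that $(\Pi_h\varphi,\Pi_h\psi)\in\mathbb{V}_h$ is a legitimate test function, and that the two separately-uncontrolled normal-derivative traces $A\nabla w_h\cdot\nu$ and $\nabla v_h\cdot\nu$ only ever appear in the controllable combination $(A\nabla w_h-\nabla v_h)\cdot\nu$ — this is exactly why the boundary condition $\partial w/\partial\nu_A=\partial v/\partial\nu$ enters the estimator through $J_e^b$ rather than through two separate jump terms. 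Everything else (the interior-face telescoping, the scaling arguments, the finite-overlap counting) is standard residual-estimator bookkeeping and need not be spelled out in detail.
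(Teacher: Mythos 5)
Your proposal is correct and follows essentially the same route as the paper: invoke the a posteriori formula (\ref{s3.5}) of Lemma 3.1, use Galerkin orthogonality with the Scott--Zhang interpolant $\Pi_h\Psi\in\mathbb{V}_h$, integrate by parts elementwise so that the interior jumps and the combined boundary term $(A\nabla w_h-\nabla v_h)\cdot\nu$ appear (exactly as in the paper's identities (\ref{s3.10})--(\ref{s3.11})), and conclude with Cauchy--Schwarz and the local estimates (\ref{s3.3})--(\ref{s3.4}). The only difference is cosmetic (you subtract the interpolant before integrating by parts, the paper after), and your emphasis on the trace compatibility ensuring $\Pi_h\Psi\in\mathbb{V}_h$ and the merging of the two boundary fluxes matches the paper's treatment.
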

\begin{proof}
An integration by parts elementwise yields for any $\Psi\in\mathbb{V}$,
\begin{equation}\label{s3.10}
\begin{aligned}
&\lambda_hb( \mathbf{U}_h,\Psi) - a(\mathbf{U}_h,\Psi)\\
=& (\lambda_h n(x)w_h,\varphi)_0 - (\lambda_h v_h,\psi)_0 - (A\nabla w_h,\nabla\varphi)_0 - (n(x)w_h,\varphi)_0 + (\nabla v_h,\nabla\psi)_0 + (v_h,\psi)_0\\
=& \sum_{{\kappa}\in \mathcal{T}_{h}}\int_{{\kappa}}( \nabla\cdot(A\nabla w_h) + (\lambda_h-1)n(x)w_h   )\bar{\varphi}-( \Delta v_h + (\lambda_h-1) v_h  )\bar{\psi}\,\mathrm{d}x \\
&-\sum_{{\kappa}\in \mathcal{T}_{h}}\int_{\partial {\kappa}}( A\nabla w_h\cdot\nu )\bar{\varphi}
-( \nabla v_h \cdot \nu )\bar{\psi}\,\mathrm{d}s\\
=& \sum_{{\kappa}\in \mathcal{T}_{h}}\int_{{\kappa}}( \nabla\cdot(A\nabla w_h) + (\lambda_h-1)n(x)w_h   )\bar{\varphi}-( \Delta v_h + (\lambda_h-1) v_h  )\bar{\psi}\,\mathrm{d}x \\
&
-\frac12\sum_{\kappa\in \mathcal{T}_{h}}\sum_{e\subset (\partial {\kappa}\cap \mathcal{E}_h^i)}\int_{e}( [\![ A\nabla w_h]\!] )\bar{\varphi}-( [\![ \nabla v_h]\!] )\bar{\psi}\,\mathrm{d}s \\
&
-\sum_{\kappa\in \mathcal{T}_{h}}\sum_{e\subset (\partial {\kappa}\cap \mathcal{E}_h^b)}\int_{e}( ( A\nabla w_h-\nabla v_h)\cdot\nu )\bar{\varphi} \,\mathrm{d}s.
\end{aligned}
\end{equation}
Note $\Pi_h\Psi = (\Pi_h\varphi,\Pi_h\psi)$ and $\Psi-\Pi_h\Psi\in \mathbb{V}$, from
(\ref{s2.21}) and (\ref{s3.10}) we deduce
\begin{eqnarray}\label{s3.11}
&&\lambda_hb( \mathbf{U}_h,\Psi) - a(\mathbf{U}_h,\Psi)
=\lambda_hb( \mathbf{U}_h,\Psi-\Pi_h\Psi) - a(\mathbf{U}_h,\Psi-\Pi_h\Psi)\nonumber\\
&&~~~= \sum_{{\kappa}\in \mathcal{T}_{h}}\int_{{\kappa}}( \nabla\cdot(A\nabla w_h) + (\lambda_h-1)n(x)w_h   )(\overline{  \varphi - \Pi_h\varphi  })\nonumber\\
&&~~~-( \Delta v_h + (\lambda_h-1) v_h  )(\overline{\psi - \Pi_h\psi})\,\mathrm{d}x \nonumber\\
&&~~~
-\frac12\sum_{\kappa\in \mathcal{T}_{h}}\sum_{e\subset (\partial {\kappa}\cap \mathcal{E}_h^i)}\int_{e}( [\![ A\nabla w_h]\!] )(\overline{  \varphi - \Pi_h\varphi  })-( [\![ \nabla v_h]\!] )(\overline{\psi - \Pi_h\psi})\,\mathrm{d}s \nonumber\\
&&~~~
-\sum_{\kappa\in \mathcal{T}_{h}}\sum_{e\subset (\partial {\kappa}\cap \mathcal{E}_h^b)}\int_{e}( ( A\nabla w_h-\nabla v_h)\cdot\nu )(\overline{  \varphi - \Pi_h\varphi  }) \,\mathrm{d}s.
\end{eqnarray}
Using the Cauchy-Schwarz inequality and (\ref{s3.11}) yields
\begin{equation}\label{s3.12}
\begin{aligned}
&|\lambda_h b( \mathbf{U}_h,\Psi) - a(\mathbf{U}_h,\Psi)|\\
\lesssim & \sum_{{\kappa}\in \mathcal{T}_{h}}\bigg(\| (\lambda_h-1)n(x)w_h + \nabla\cdot(A\nabla w_h) \|_{0,{\kappa}}\|  \overline{\varphi - \Pi_h\varphi}   \|_{0,{\kappa}} \\
&+\frac12\sum_{e\subset (\partial {\kappa}\cap \mathcal{E}_h^i)}\|  [\![ A\nabla w_h]\!] \|_{0,e}\|  \overline{\varphi - \Pi_h\varphi}  \|_{0,e}
+\sum_{e\subset (\partial {\kappa}\cap \mathcal{E}_h^b)}\|( A\nabla w_h-\nabla v_h)\cdot\nu\|_{0,e}\|  \overline{\varphi - \Pi_h\varphi}  \|_{0,e}\bigg) \\
&+\sum_{{\kappa}\in \mathcal{T}_{h}}\bigg(\|  (\lambda_h-1) v_h + \Delta v_h \|_{0,{\kappa}}\|  \overline{\psi - \Pi_h\psi}  \|_{0,{\kappa}}
+\frac12\sum_{e\subset (\partial {\kappa}\cap \mathcal{E}_h^i)}\|  [\![ \nabla v_h]\!] \|_{0,e}\|  \overline{\psi - \Pi_h\psi}  \|_{0,e}\bigg).\\
\end{aligned}
\end{equation}
From the interpolation estimates (\ref{s3.3}) and (\ref{s3.4})  and inverse estimates, we deduce
\begin{equation}
\begin{aligned}
&|\lambda_hb( \mathbf{U}_h,\Psi) - a(\mathbf{U}_h,\Psi)|\\
\lesssim & \sum_{{\kappa}\in \mathcal{T}_{h}}\bigg(h_{{\kappa}}^2\| (\lambda_h-1)n(x)w_h + \nabla\cdot(A\nabla w_h) \|_{0,{\kappa}}^2 \\
&+\frac12\sum_{e\subset (\partial {\kappa}\cap \mathcal{E}_h^i)}h_{e}\|  [\![ A\nabla w_h]\!] \|_{0,e}^2
+\sum_{e\subset (\partial {\kappa}\cap \mathcal{E}_h^b)}h_{e}\|( A\nabla w_h-\nabla v_h)\cdot\nu\|^2_{0,e}
\bigg)^{1/2}\|\nabla\varphi\|_{0,\omega_{\kappa}}\\
&+\sum_{{\kappa}\in \mathcal{T}_{h}}\bigg(h_{{\kappa}}^2\|  (\lambda_h-1) v_h + \Delta v_h \|_{0,{\kappa}}^2
+\frac12\sum_{e\subset (\partial {\kappa}\cap \mathcal{E}_h^i)}h_{e}\|  [\![ \nabla v_h]\!] \|_{0,e}^2\bigg)^{1/2}\|\nabla\psi\|_{0,\omega_{\kappa}}\nonumber\\
&\lesssim \eta(\mathbf{U}_h)\|\Psi\|_{\mathbb{V}}.
\end{aligned}
\end{equation}
Substituting the above inequality into (\ref{s3.5}), we obtain (\ref{s3.9}). This completes the proof.
\end{proof}

\begin{remark}
A simple calculation shows that
\begin{equation*}
\|\lambda\mathbf{U}-\lambda_{h}\mathbf{U}_{h}\|_{\mathbb{W}}\lesssim |\lambda-\lambda_{h}|+\|\mathbf{U}-\mathbf{U}_{h}\|_{\mathbb{W}}.
\end{equation*}
From Theorems 2.2 and 2.3 we know that $|\lambda-\lambda_{h}|$ and $\|\mathbf{U}-\mathbf{U}_{h}\|_{\mathbb{W}}$ are both
small quantities of higher order compared with $\|\mathbf{U}-\mathbf{U}_{h}\|_{\mathbb{V}}$,
then
$\|\lambda\mathbf{U}-\lambda_{h}\mathbf{U}_{h}\|_{\mathbb{W}}$ is a small quantity of higher order compared with $\|\mathbf{U}-\mathbf{U}_{h}\|_{\mathbb{V}}$.
Hence, Theorem 3.1 shows that when $h$ is small enough, the error estimator $\eta(\mathbf{U}_h)$ is reliable for eigenfunction $\mathbf{U}_{h}$ up to the higher order term $ \|\lambda \mathbf{U}-\lambda_h \mathbf{U}_h\|_{\mathbb{W}}$.
\end{remark}
\indent Referring to \cite{dai2014},
we give the following a posteriori error estimate for  transmission eigenvalues.
\begin{theorem}
Let $\lambda$ and  $\lambda_h$ be the $k$th eigenvalue of  (\ref{s2.9}) and (\ref{s2.21}), respectively.
Assume that
$\alpha=1$, the set of eigenfunctions $\{\mathbf{U}_{j,h}\}_{j=k}^{k+q-1}$ is
 an orthonormal basis of $M_{h}(\lambda)$ satisfying $\|\mathbf{U}_{j,h}\|_{\mathbb{V}}=1$,
and $h$ is small enough, then
\begin{equation}\label{s3.13}
|\lambda_h-\lambda|\lesssim \sum\limits_{j=k}^{k+q-1}
\eta_{h}(\mathbf{U}_{j,h})^{2}.
\end{equation}
\end{theorem}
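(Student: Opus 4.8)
The plan is to express each cluster eigenvalue error $\lambda_{j,h}-\lambda$ through a ratio whose numerator is \emph{quadratic} in the eigenfunction error and whose denominator stays bounded away from zero, and then to control the numerator by the estimator via Theorem~3.1. First, fix $j\in\{k,\dots,k+q-1\}$. Since $\alpha=1$, the space $M(\lambda)$ consists only of genuine eigenfunctions, so by Theorem~2.2 (its last estimate, with $\rho=l=\alpha=1$) there is an eigenfunction $\mathbf{U}_j\in M(\lambda)$ of~(\ref{s2.9}) such that, setting $e:=\mathbf{U}_j-\mathbf{U}_{j,h}$, one has $\|e\|_{\mathbb{V}}\lesssim\delta_h(\lambda)\to0$. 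Consequently $\|e\|_{\mathbb{W}}\to0$, and by Theorem~2.3 with $\alpha=1$, $\|e\|_{\mathbb{W}}\lesssim\varepsilon_h\|e\|_{\mathbb{V}}$ with $\varepsilon_h:=\|\mathbb{K}_{*}-P_h^{*}\mathbb{K}_{*}\|_{\mathbb{V}}^{1/2}\to0$ (Lemma~2.1).

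Next, I would derive an algebraic identity. Using that $a(\cdot,\cdot)$ and $b(\cdot,\cdot)$ are self-adjoint, that $a(\mathbf{U}_j,\Psi)=\lambda b(\mathbf{U}_j,\Psi)$ for all $\Psi\in\mathbb{V}$ and $a(\mathbf{U}_{j,h},\Psi)=\lambda_{j,h}b(\mathbf{U}_{j,h},\Psi)$ for all $\Psi\in\mathbb{V}_h$, and that $\lambda,\lambda_{j,h}$ are real, one expands $a(e,e)-\lambda_{j,h}b(e,e)$ and collects terms to obtain
\begin{equation*}
\lambda_{j,h}-\lambda=\frac{a(e,e)-\lambda_{j,h}\,b(e,e)}{b(\mathbf{U}_{j,h},\mathbf{U}_{j,h})-b(e,e)}.
\end{equation*}
By~(\ref{s2.8}) and $\|e\|_{\mathbb{W}}\to0$ the denominator tends to $b$ evaluated on a unit eigenfunction of the real eigenvalue $\lambda$, which does not vanish (this is where the reality of $\lambda$ is used; cf.\ Remark~2.1), so it is bounded away from zero for $h$ small. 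Together with the continuity bounds~(\ref{s2.7})--(\ref{s2.8}) and the boundedness of $\lambda_{j,h}$ this yields $|\lambda_{j,h}-\lambda|\lesssim\|e\|_{\mathbb{V}}^{2}+\|e\|_{\mathbb{W}}^{2}$.

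It remains to bound $\|e\|_{\mathbb{V}}$ by the estimator. Applying Theorem~3.1 to the eigenpairs $(\lambda,\mathbf{U}_j)$ and $(\lambda_{j,h},\mathbf{U}_{j,h})$, followed by the estimate of Remark~3.1, gives $\|e\|_{\mathbb{V}}\lesssim\eta(\mathbf{U}_{j,h})+|\lambda-\lambda_{j,h}|+\|e\|_{\mathbb{W}}$. Substituting the bound just obtained for $|\lambda-\lambda_{j,h}|$ and $\|e\|_{\mathbb{W}}\lesssim\varepsilon_h\|e\|_{\mathbb{V}}$, we get $\|e\|_{\mathbb{V}}\lesssim\eta(\mathbf{U}_{j,h})+\|e\|_{\mathbb{V}}^{2}+\varepsilon_h\|e\|_{\mathbb{V}}$; since $\|e\|_{\mathbb{V}}\to0$ and $\varepsilon_h\to0$, for $h$ small the last two terms are absorbed into the left-hand side, so $\|e\|_{\mathbb{V}}\lesssim\eta(\mathbf{U}_{j,h})$ and hence $|\lambda_{j,h}-\lambda|\lesssim\eta(\mathbf{U}_{j,h})^{2}$. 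Since $\lambda_h=\lambda_{k,h}$ is an eigenvalue of the cluster, summing over $j=k,\dots,k+q-1$ yields~(\ref{s3.13}).

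I expect the main obstacle to be the bookkeeping that makes the absorption in the last step legitimate: one must verify that $|\lambda-\lambda_{j,h}|$ and $\|e\|_{\mathbb{W}}$ are of strictly higher order than $\eta(\mathbf{U}_{j,h})$ --- equivalently, than $\|e\|_{\mathbb{V}}$ --- which rests on the operator-norm convergence $\|\mathbb{K}_{*}-P_h^{*}\mathbb{K}_{*}\|_{\mathbb{V}}\to0$ (Lemma~2.1) and the superconvergence estimate~(\ref{s2.38}) of Theorem~2.3, and one must also guarantee that the denominator of the identity does not degenerate, which is precisely where the restriction to real transmission eigenvalues enters.
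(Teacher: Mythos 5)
Your route differs from the paper's, and it has a genuine gap at the point you yourself flag: the nondegeneracy of the denominator in the identity $\lambda_{j,h}-\lambda=\bigl(a(e,e)-\lambda_{j,h}b(e,e)\bigr)/\bigl(b(\mathbf{U}_{j,h},\mathbf{U}_{j,h})-b(e,e)\bigr)$. First, deriving this identity already uses $\bar\lambda=\lambda$ (and, implicitly, reality of $\lambda_{j,h}$), whereas Theorem~3.2 is stated without any reality assumption; so at best you prove a restricted version. Second, even for a real $\lambda$, Remark~2.1 only gives the implication ``$\lambda$ complex $\Rightarrow b(\mathbf{U},\mathbf{U})=0$''; it does \emph{not} give ``$\lambda$ real $\Rightarrow b(\mathbf{U}_j,\mathbf{U}_j)\neq0$'' for the particular limit eigenfunctions produced by Theorem~2.2. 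Since $b$ is indefinite, a real eigenvalue of multiplicity $q\geq2$ with ascent one only guarantees that $b$ is nondegenerate on $M(\lambda)$, not that the specific directions $\mathbf{U}_j$ are non-isotropic; worse, the discrete cluster approximating a multiple real $\lambda$ may consist of complex-conjugate pairs (the forms are real), and then the discrete analogue of Remark~2.1 gives $b(\mathbf{U}_{j,h},\mathbf{U}_{j,h})=0$ exactly, so your denominator is $-b(e,e)\to0$ and no quadratic bound for $|\lambda_{j,h}-\lambda|$ follows. For a simple real eigenvalue ($q=1$) your argument can be repaired (there $\alpha=1$ forces $b(\mathbf{U},\mathbf{U})\neq0$ and $\lambda_h$ is real for small $h$), but not under the hypotheses of the theorem as stated.

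The paper avoids this entirely: it never forms a Rayleigh-quotient identity. Instead it bounds the approximability quantity $\delta_h(\lambda)$ by the estimators, using the spectral projection $E$ associated with $\mathbb{K}$ and $\lambda$ — the functions $E\mathbf{U}_{j,h}$ span $M(\lambda)$ and $\|E\mathbf{U}_{j,h}-\mathbf{U}_{j,h}\|_{\mathbb{V}}\lesssim\eta(\mathbf{U}_{j,h})$ by Theorem~3.1 (with the higher-order terms absorbed, the part of your argument that is sound) — and then inserts $\delta_h(\lambda)\lesssim\sum_j\eta(\mathbf{U}_{j,h})$ into the a priori estimate (\ref{s2.34}) with $\alpha=1$, i.e. $|\lambda-\lambda_{j,h}|\lesssim\delta_h(\lambda)^2$. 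That step supplies the quadratic dependence without any reality assumption or any lower bound on $b$ restricted to the eigenspace, which is exactly what your denominator argument cannot provide.
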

\begin{proof}
Let $E$ be the spectral projection associated with $\mathbb{K}$ and $\lambda$ (see Section 6 in \cite{babuska1991}).
Then $\mathbf{U}_{j,h}-E\mathbf{U}_{j,h}$ satisfies Theorem 2.2,
and $\{E\mathbf{U}_{j,h}\}_{j=k}^{k+q-1}$ is a basis of $M(\lambda)$.
Thus
\begin{eqnarray}\label{s3.14}
&&\delta_h(\lambda) = \sup\limits_{\mathbf{U}\in \hat{M}(\lambda)}\inf_{\Psi_h\in\mathbb{V}_h} \|\mathbf{U}-\Psi_h\|_{\mathbb{V}}\nonumber\\
&&~~~\lesssim \sum\limits_{j=k}^{k+q-1}\inf_{\Psi_h\in\mathbb{V}_h} \|\frac{E\mathbf{U}_{j,h}}{\|E\mathbf{U}_{j,h}\|_{\mathbb{V}}}-\Psi_h\|_{\mathbb{V}}
\lesssim \sum\limits_{j=k}^{k+q-1}\inf_{\Psi_h\in\mathbb{V}_h} \|\frac{E\mathbf{U}_{j,h}}{\|E\mathbf{U}_{j,h}\|_{\mathbb{V}}}-\frac{\Psi_h}{\|\Psi_h\|_{\mathbb{V}}}\|_{\mathbb{V}}\nonumber\\
&&~~~\lesssim \sum\limits_{j=k}^{k+q-1}\inf_{\Psi_h\in\mathbb{V}_h} \|E\mathbf{U}_{j,h}-\Psi_h\|_{\mathbb{V}}
\lesssim \sum\limits_{j=k}^{k+q-1} \|E\mathbf{U}_{j,h}-\mathbf{U}_{j,h}\|_{\mathbb{V}}\nonumber\\
&&~~~\lesssim \sum\limits_{j=k}^{k+q-1} \eta_{h}(\mathbf{U}_{j,h}).
\end{eqnarray}
Substituting (\ref{s3.14}) into (\ref{s2.34}), we get (\ref{s3.13}) immediately.
\end{proof}
\subsection{The local lower bound of the error for eigenfunction}
In this subsection,
we will use the bubble function techniques developed by Verfürth to prove that
the local error estimator $\eta_{\kappa}$ provides a local lower bound for the error on a neighborhood of $\kappa$.\\
\indent For  $\kappa \in \mathcal{T}_h$, let $\mathfrak{b}_{\kappa}\in H^{1}(\kappa)$ satisfying $\mathfrak{b}_{\kappa}|_{\Omega\backslash\kappa}=0$ be the element bubble function, and
for $e\in \mathcal{E}_h$, let $\mathfrak{b}_{e}\in H^{1}(\omega_{e})$ satisfying $\mathfrak{b}_{e}|_{\Omega\backslash\omega_{e}}=0$ be the face bubble function,
where $\omega_e$ is the union of all elements that share $e$.
Then the following lemma holds (see \cite{verfurth1996,verfurth1998}).
\begin{lemma}
For any $\kappa\in\mathcal{T}_h$ and for any $\phi\in P_m(\kappa)$, there hold
\begin{eqnarray}\label{s3.15}
&&\|\phi\|_{0,\kappa}^{2}
\lesssim\int_{\kappa}(\mathfrak{b}_{\kappa}\phi)\bar{\phi}\,\mathrm{d}x,\\\label{s3.16}
&&\|\mathfrak{b}_{\kappa}\phi\|_{0,\kappa}
\lesssim\|\phi\|_{0,\kappa}.
\end{eqnarray}
For any $e\in\mathcal{E}_h$ and for any $\phi\in P_m|_{e}$, there holds
 \begin{eqnarray}\label{s3.17}
\|\phi \|_{0,e} \lesssim  \|\mathfrak{b}_{e}^{1/2}\phi\|_{0,e},
 \end{eqnarray}
and for each $\mathfrak{b}_{e}\phi|_{e}$, there exists an extension $\psi_{e}$ on $\omega_{e}$ satisfying $\psi_{e}|_{e}=\mathfrak{b}_{e}\phi$, $\psi_{e}|_{ \partial\omega_{e}\backslash\partial\Omega}=0$ and
 \begin{eqnarray}\label{s3.18}
\|\psi_{e}\|_{0,\omega_e}\lesssim
 h_{e}^{1/2}\|\phi\|_{0,e}.
\end{eqnarray}
\end{lemma}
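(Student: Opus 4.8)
The plan is to reduce each of \eqref{s3.15}--\eqref{s3.18} to a norm‑equivalence statement on a fixed reference element (or reference face, or reference patch), where everything lives in a finite‑dimensional polynomial space, and then transfer back by an affine change of variables, carefully tracking the powers of $h$ carried by the Jacobians and invoking the shape‑regularity of $\{\mathcal{T}_h\}$ to keep all hidden constants uniform. This is the classical bubble‑function argument of Verf\"urth; I only sketch the steps.

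First, for \eqref{s3.15}--\eqref{s3.16}: let $F_\kappa:\hat\kappa\to\kappa$ be the affine map from the reference simplex, and take $\mathfrak{b}_\kappa$ to be the pullback under $F_\kappa$ of the fixed reference bubble $\hat{\mathfrak{b}}$ (the normalized product of the barycentric coordinates of $\hat\kappa$), which is strictly positive in the interior of $\hat\kappa$ and vanishes on $\partial\hat\kappa$. On the finite‑dimensional space $P_m(\hat\kappa)$ the functional $\hat\phi\mapsto\bigl(\int_{\hat\kappa}\hat{\mathfrak{b}}\,\hat\phi\,\bar{\hat\phi}\,\mathrm{d}\hat x\bigr)^{1/2}$ is a norm (positive‑definiteness holds because $\hat{\mathfrak{b}}>0$ on an open set and $\hat\phi$ is a polynomial), hence it is equivalent to $\|\cdot\|_{0,\hat\kappa}$; this yields $\|\hat\phi\|_{0,\hat\kappa}^2\lesssim\int_{\hat\kappa}\hat{\mathfrak{b}}\,\hat\phi\,\bar{\hat\phi}$ and $\|\hat{\mathfrak{b}}\hat\phi\|_{0,\hat\kappa}\le\|\hat{\mathfrak{b}}\|_{L^\infty}\|\hat\phi\|_{0,\hat\kappa}\lesssim\|\hat\phi\|_{0,\hat\kappa}$. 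Pushing forward with $\mathrm{d}x=|\det DF_\kappa|\,\mathrm{d}\hat x$, both sides of \eqref{s3.15} carry the same factor $|\det DF_\kappa|$ and both sides of \eqref{s3.16} the factor $|\det DF_\kappa|^{1/2}$, so the Jacobians cancel, and shape‑regularity ($h_\kappa^d\simeq|\det DF_\kappa|$ with uniform constants) gives \eqref{s3.15}--\eqref{s3.16}. Inequality \eqref{s3.17} is obtained by the same recipe on a reference face $\hat e$, with $\hat{\mathfrak{b}}_{\hat e}$ the restriction to $\hat e$ of the reference face bubble (positive in the relative interior of $\hat e$, vanishing on $\partial\hat e$): the map $\hat\phi\mapsto\|\hat{\mathfrak{b}}_{\hat e}^{1/2}\hat\phi\|_{0,\hat e}$ is a norm on $P_m(\hat e)$ equivalent to $\|\cdot\|_{0,\hat e}$, and mapping back multiplies each side of the squared inequality by the $(d-1)$‑dimensional surface Jacobian, which cancels.

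Second, for \eqref{s3.18} I would build the extension explicitly. Let $\kappa$ be one of the (at most two) elements of $\omega_e$ having $e$ as a face, $F_\kappa:\hat\kappa\to\kappa$ the affine map and $\hat e=F_\kappa^{-1}(e)$; extend $(\phi\circ F_\kappa)|_{\hat e}$ to a polynomial $\widehat{P\phi}\in P_m(\hat\kappa)$ that is constant along the height direction perpendicular to $\hat e$, and set $\hat\psi_{\hat e}=\hat{\mathfrak{b}}_{\hat e}\,\widehat{P\phi}$ on $\hat\kappa$. Since $\hat{\mathfrak{b}}_{\hat e}$ vanishes on $\partial\hat\kappa\setminus\hat e$ and its trace on $\hat e$ is the same from both sides, gluing these pieces over the elements of $\omega_e$ produces $\psi_e\in H^1(\omega_e)$ with $\psi_e|_e=\mathfrak{b}_e\phi$ and $\psi_e|_{\partial\omega_e\setminus\partial\Omega}=0$. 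The bound $\|\hat\psi_{\hat e}\|_{0,\hat\kappa}\le\|\hat{\mathfrak{b}}_{\hat e}\|_{L^\infty}\|\widehat{P\phi}\|_{0,\hat\kappa}\lesssim\|(\phi\circ F_\kappa)|_{\hat e}\|_{0,\hat e}$ uses the finite‑dimensional equivalence of $\|\widehat{P\phi}\|_{0,\hat\kappa}$ and $\|\widehat{P\phi}|_{\hat e}\|_{0,\hat e}$ on $P_m(\hat\kappa)$. Transferring back, $\|\psi_e\|_{0,\kappa}^2\simeq h_\kappa^{d}\|\hat\psi_{\hat e}\|_{0,\hat\kappa}^2$ while $\|\phi\|_{0,e}^2\simeq h_e^{d-1}\|(\phi\circ F_\kappa)|_{\hat e}\|_{0,\hat e}^2$, and since $h_\kappa\simeq h_e$ by shape‑regularity the ratio of the volume scaling to the surface scaling is exactly the extra $h_e$, i.e. $\|\psi_e\|_{0,\omega_e}\lesssim h_e^{1/2}\|\phi\|_{0,e}$.

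The routine ingredients here are the finite‑dimensional norm equivalences and the bookkeeping of Jacobians, which are standard once shape‑regularity is used. The one place needing care is \eqref{s3.18}: one must choose the polynomial extension $\widehat{P\phi}$ so that it simultaneously matches $\phi$ on $e$, is controlled by $\|\phi\|_{0,e}$ rather than by a stronger norm, and — after multiplication by the face bubble and gluing across $\omega_e$ — yields a function that is genuinely $H^1$ on the whole patch and vanishes on the exterior part of $\partial\omega_e$; this last interface compatibility is exactly where the vanishing of $\mathfrak{b}_e$ on $\partial\hat\kappa\setminus\hat e$ is essential.
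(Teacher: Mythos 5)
Your sketch is correct: it is the classical Verf\"urth scaling argument (norm equivalence on finite-dimensional polynomial spaces over a reference element/face, Jacobian bookkeeping under shape-regularity, and the constant-in-height polynomial extension multiplied by the face bubble for \eqref{s3.18}), and you correctly flag the only delicate point, namely that the extension in \eqref{s3.18} must be $L^2(e)$-controlled and glue to an $H^1$ function vanishing on $\partial\omega_e\setminus\partial\Omega$. The paper does not prove this lemma at all but simply cites Verf\"urth's monograph and paper, where exactly this argument is carried out, so your proposal matches the intended (referenced) proof.
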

\indent In what follows, for the sake of simplicity, we shall restrict ourselves to the case
that $A(x)$ is a matrix function whose elements are polynomials and $n(x)$ is a polynomial.
 The general case requires only technical modifications.
\begin{theorem}
Let $(\lambda,\mathbf{U})$  be the eigenpair  of  (\ref{s2.9}) and let  $(\lambda_h,\mathbf{U}_h)$ be the eigenpair of (\ref{s2.21}). Then
\begin{equation}\label{s3.19}
\eta_{\kappa}(\mathbf{U}_{h})
\lesssim\bigg(
\|\mathbf{U}-\mathbf{U}_h\|_{1,\omega_{\kappa}}
+ h.o.t.
\bigg)
\end{equation}
with the high order terms
\[
\begin{aligned}
h.o.t. =& \sum_{\kappa\in\omega_{\kappa} }h_{\kappa}\bigg(
\|n(x)((\lambda_h-1)w_h - (\lambda-1)w  )\|_{0,\kappa}
+\|(\lambda_h-1)v_h - (\lambda-1)v \|^2_{0,\kappa}
\bigg),\\
\end{aligned}
\]
where $\omega_{\kappa}$ denotes the union of all elements sharing at least one common face with $\kappa$.
\end{theorem}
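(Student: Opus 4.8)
The plan is to follow Verf\"urth's standard bubble-function machinery, treating the volume residual term and the two jump terms separately, and absorbing everything coming from the difference $(\lambda_h-1)\mathbf{U}_h - (\lambda-1)\mathbf{U}$ into the higher-order term. First I would introduce the abbreviations $R_\kappa := \nabla\cdot(A\nabla w_h)+(\lambda_h-1)n(x)w_h$ and $\tilde R_\kappa := \Delta v_h+(\lambda_h-1)v_h$ on each element, and analogously for the interior jumps $[\![A\nabla w_h]\!]$, $[\![\nabla v_h]\!]$ and the boundary residual $(A\nabla w_h-\nabla v_h)\cdot\nu$. Since $A$ has polynomial entries and $n$ is a polynomial, each of these quantities is a polynomial on $\kappa$ (resp.\ on $e$), so Lemma~3.2 applies directly.

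For the volume term, I would test the weak residual identity (\ref{s3.10}) with the single function $\Psi=(\mathfrak{b}_\kappa R_\kappa,0)$ extended by zero, and separately with $(0,-\mathfrak{b}_\kappa \tilde R_\kappa)$; because $\mathfrak{b}_\kappa$ vanishes on $\partial\kappa$, only the element integral over $\kappa$ survives. Using (\ref{s3.15}) on the left and (\ref{s3.16}) together with an inverse estimate $\|\nabla(\mathfrak{b}_\kappa R_\kappa)\|_{0,\kappa}\lesssim h_\kappa^{-1}\|R_\kappa\|_{0,\kappa}$ on the right, and recalling from (\ref{s2.9}) that $\lambda b(\mathbf{U},\Psi)-a(\mathbf{U},\Psi)=0$, one rewrites $\lambda_h b(\mathbf{U}_h,\Psi)-a(\mathbf{U}_h,\Psi) = a(\mathbf{U}-\mathbf{U}_h,\Psi) + b(\lambda_h\mathbf{U}_h-\lambda\mathbf{U},\Psi)$; the first piece is bounded by $\|\mathbf{U}-\mathbf{U}_h\|_{1,\kappa}\|\Psi\|_{\mathbb V}$ (using (\ref{s2.7})), the second by $\|(\lambda_h-1)\mathbf{U}_h-(\lambda-1)\mathbf{U}\|_{0,\kappa}\,\|\Psi\|_{\mathbb W}$ after noting the shift $\lambda b(\mathbf U,\Psi)=((\lambda-1)n w,\varphi)_0+(nw,\varphi)_0-\ldots$ lines up the residual form. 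Dividing through by $\|R_\kappa\|_{0,\kappa}$ (and $\|\tilde R_\kappa\|_{0,\kappa}$) yields $h_\kappa\|R_\kappa\|_{0,\kappa}\lesssim \|\mathbf{U}-\mathbf{U}_h\|_{1,\kappa}+h_\kappa\|(\lambda_h-1)\mathbf{U}_h-(\lambda-1)\mathbf{U}\|_{0,\kappa}$, which is exactly the volume part of (\ref{s3.19}) with the stated $h.o.t.$

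For the interior jump terms I would test (\ref{s3.10}) with the extension $\Psi=(\psi_e,0)$ from (\ref{s3.18}) with $\phi=[\![A\nabla w_h]\!]$ (and separately the $\psi$-component with $[\![\nabla v_h]\!]$), where $\psi_e$ is supported on $\omega_e$. Now both the element integrals over the two elements in $\omega_e$ and the single edge integral over $e$ appear; the edge integral is estimated from below by (\ref{s3.17}), while the element integrals are controlled by the already-established volume bound plus the $a(\mathbf U-\mathbf U_h,\cdot)$ and $b$-difference terms, using (\ref{s3.18}) to trade the $h_e^{1/2}$ factor. The boundary jump $J_e^b$ is handled identically using an extension of $\mathfrak b_e(A\nabla w_h-\nabla v_h)\cdot\nu$ vanishing on $\partial\omega_e\setminus\partial\Omega$, which is legitimate because $\Psi-$component need only lie in $H^1$, not in $H^1_0$, along $\partial\Omega$. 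Summing the local contributions over the faces of $\kappa$ and combining with the volume estimate gives (\ref{s3.19}), after absorbing the finitely-overlapping patches $\omega_e\subset\omega_\kappa$.

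The main obstacle I anticipate is bookkeeping the $\lambda$ versus $\lambda-1$ shift consistently: the residuals in $\eta_\kappa$ are written with $(\lambda_h-1)$ to match the PDE form (\ref{s2.2}), whereas the weak form (\ref{s2.9}) carries the raw $\lambda$, so when I subtract the exact relation $a(\mathbf U,\Psi)=\lambda b(\mathbf U,\Psi)$ I must carefully track that the difference of data terms collapses to $\|n(x)((\lambda_h-1)w_h-(\lambda-1)w)\|_{0,\kappa}$ and $\|(\lambda_h-1)v_h-(\lambda-1)v\|_{0,\kappa}$ rather than something with a stray $\|\mathbf U\|$. A secondary technical point is that the inverse estimates implicitly require $\mathbf U_h$ to be a finite element function so that $R_\kappa$ is a polynomial of bounded degree on $\kappa$; this is why the theorem restricts to polynomial $A$ and $n$, and the general case needs only that one approximates $A,n$ by polynomials and absorbs the approximation error into the higher-order term, as remarked before the statement. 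Everything else is the routine Verf\"urth argument.
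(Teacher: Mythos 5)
Your treatment of the volume residual and of the interior jump terms is sound and is essentially the paper's argument in weak-form clothing: testing the residual identity (\ref{s3.10}) with $(\mathfrak{b}_\kappa R_\kappa,0)$, $(0,-\mathfrak{b}_\kappa \tilde R_\kappa)$ and with $(\psi_e,0)$ for interior faces, and replacing $\lambda_h b(\mathbf U_h,\Psi)-a(\mathbf U_h,\Psi)$ by $a(\mathbf U-\mathbf U_h,\Psi)+b(\lambda_h\mathbf U_h-\lambda\mathbf U,\Psi)$, reproduces exactly the paper's elementwise Green's-formula computation against the strong equations in (\ref{s2.2}); your bookkeeping of the $\lambda$ versus $\lambda-1$ shift is also correct, since the zeroth-order parts of $a$ recombine with the $b$-difference to give precisely $n(x)((\lambda_h-1)w_h-(\lambda-1)w)$ and $(\lambda_h-1)v_h-(\lambda-1)v$.

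The genuine gap is in the boundary-face term. The test pair you propose there, $\Psi=(\psi_e,0)$ with $\psi_e$ an extension of $\mathfrak{b}_e(A\nabla w_h-\nabla v_h)\cdot\nu$ that does not vanish on $e\subset\partial\Omega$, is not admissible: membership in $\mathbb{V}$ requires $\varphi-\psi\in H_0^1(\Omega)$, i.e.\ equal traces of the two components on $\partial\Omega$, and here $\varphi-\psi=\psi_e\neq 0$ on $e$. Your justification (``the $\Psi$-component need only lie in $H^1$, not $H_0^1$'') misses exactly this coupling constraint. Consequently neither (\ref{s3.10}) — whose boundary term $((A\nabla w_h-\nabla v_h)\cdot\nu)\bar\varphi$ is obtained precisely by setting $\varphi=\psi$ on $\partial\Omega$ — nor the exact relation $a(\mathbf U,\Psi)=\lambda b(\mathbf U,\Psi)$ may be invoked for this $\Psi$; and integrating by parts with a one-component test function only isolates $\int_e(A\nabla w_h\cdot\nu)\bar\psi_e\,\mathrm{d}s$, a quantity controlled by no equation of the exact solution (only the combined flux $(A\nabla w-\nabla v)\cdot\nu$ vanishes on $\partial\Omega$), so the step fails as written. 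The repair is to test with $\Psi=(\psi_e,\psi_e)\in\mathbb{V}$, $\psi_e$ vanishing on $\partial\omega_e\setminus\partial\Omega$: then the boundary integral that appears is exactly $\int_e((A\nabla w_h-\nabla v_h)\cdot\nu)\bar\psi_e\,\mathrm{d}s$, and the accompanying element residuals of \emph{both} equations on $\omega_e$ are absorbed by your already-proved volume bounds, yielding the paper's estimate (\ref{s3.24}). The paper avoids the issue by never leaving the strong form: it applies Green's formula on $\omega_e$ to $A\nabla(w_h-w)$ and $\nabla(v_h-v)$ simultaneously and uses the exact transmission condition $(A\nabla w-\nabla v)\cdot\nu=0$ on $\partial\Omega$, which is why both $w$- and $v$-differences appear in its boundary estimate.
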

\begin{proof}
We now estimate each term of the right hand side of (\ref{s3.1}).\\
(i)~ For $\kappa\in\mathcal{T}_h$, choose $\phi_{\kappa}= \mathfrak{b}_{\kappa}( \nabla\cdot(A\nabla w_h) + (\lambda_h-1)n(x)w_h   )|_{\kappa}$. Since $\mathrm{supp}(\phi_{\kappa})\subset\kappa$, from the first equation in (\ref{s2.2}) and Green's formula we deduce
\[
\begin{aligned}
&\int_{{\kappa}}( \nabla\cdot(A\nabla w_h) + (\lambda_h-1)n(x)w_h)\bar{\phi}_{\kappa}\,\mathrm{d}x\\
=&\int_{{\kappa}}n(x)((\lambda_h-1)w_h - (\lambda-1)w  )\bar{\phi}_{\kappa} \,\mathrm{d}x
+ \int_{{\kappa}}(A\nabla w)\cdot\nabla\bar{\phi}_{\kappa}\,\mathrm{d}x
+ \int_{{\kappa}}( \nabla\cdot(A\nabla w_h)\bar{\phi}_{\kappa} \,\mathrm{d}x \\
\lesssim
& \left(\|n(x)((\lambda_h-1)w_h - (\lambda-1)w  )\|_{0,\kappa}
+h_{\kappa}^{-1}\|A\nabla (w-w_h)\|_{0,\kappa}\right)\|\bar{\phi}_{\kappa}\|_{0,\kappa}.
\end{aligned}
\]
Using (\ref{s3.15}) and (\ref{s3.16}) in Lemma 3.2,
we deduce
\begin{eqnarray}\label{s3.20}
&&~~~h_{\kappa}\|\nabla\cdot(A\nabla w_h) + (\lambda_h-1)n(x)w_h\|_{0,\kappa}\nonumber\\
&&\lesssim h_{\kappa}\|n(x)((\lambda_h-1)w_h - (\lambda-1)w  )\|_{0,\kappa}
+
\|A\nabla (w-w_h)\|_{0,\kappa}.
\end{eqnarray}
Similarly, we can deduce
\begin{equation}\label{s3.21}
h_{\kappa}\|\Delta v_h + (\lambda_h-1)v_h\|_{0,\kappa}
\lesssim h_{\kappa}\|(\lambda_h-1)v_h - (\lambda-1)v \|_{0,\kappa}
+\|\nabla (v-v_h)\|_{0,\kappa}.
\end{equation}
(ii)~ For $e\in\mathcal{E}_h^i$, let $\psi_{e}\in H_{0}^{1}(\omega_e)$ be an extension of $\mathfrak{b}_{e}([\![ A\nabla w_h]\!] )|_e$ satisfying
(\ref{s3.17}) and (\ref{s3.18}).
Then
\[
\begin{aligned}
&\int_{{e}} ([\![ A\nabla w_h]\!])\bar{\psi}_{e}\,\mathrm{d}s=\int_{{e}} ([\![ A\nabla (w_h-w)]\!])\bar{\psi}_{e}\,\mathrm{d}s\\
=&\int_{\omega_e}A\nabla (w_h-w)\cdot \overline{\nabla \psi_{e}}\,\mathrm{d}x
+ \int_{\omega_e}\nabla\cdot (A\nabla (w_h-w))\bar{\psi}_{e} \,\mathrm{d}x \\
=&\int_{\omega_e}A\nabla (w_h - w) \cdot \overline{\nabla \psi_{e}}\,\mathrm{d}x
+ \int_{\omega_e}n(x)(\lambda-1)  w  \bar{\psi}_{e}\,\mathrm{d}x
+ \int_{\omega_e}\nabla\cdot (A\nabla w_h)\bar{\psi}_{e} \,\mathrm{d}x \\
\lesssim & \left(h_{e}^{-1}\|A\nabla (w_h - w)  \|_{0,\omega_{e}}+\|
n(x)(\lambda-1) w  + \nabla\cdot (A\nabla w_h)\|_{0,\omega_{e}}\right)
\|\bar{\psi}_{e}\|_{0,\omega_{e}}.
\end{aligned}
\]
Using (\ref{s3.17}) and (\ref{s3.18}), we have
\[
\begin{aligned}
&\|[\![ A\nabla w_h]\!]\|_{0,e}^2\\
\lesssim & \left(h_{e}^{-1/2}\|A\nabla (w_h - w)  \|_{0,\omega_{e}}+h_{e}^{1/2}\|n(x)(\lambda-1) w  + \nabla\cdot (A\nabla w_h)\|_{0,\omega_{e}}\right)
\|[\![ A\nabla w_h]\!]\|_{0,e}.\\
\end{aligned}
\]
Thus,
\begin{eqnarray*}
&&h_{e}^{1/2}\|[\![ A\nabla w_h]\!]\|_{0,e}
\lesssim\|A\nabla (w_h - w) \|_{0,\omega_{e}}+h_{e}\|n(x)(\lambda_{h}-1) w_{h}  + \nabla\cdot (A\nabla w_h)\|_{0,\omega_{e}}\nonumber\\
&&~~~~~~+h_{e}\|n(x)(\lambda-1) w -n(x)(\lambda_{h}-1) w_{h} \|_{0,\omega_{e}}.
\end{eqnarray*}
Combining the above estimate and (\ref{s3.20}) we obtain
\begin{eqnarray}\label{s3.22}
h_{e}^{1/2}\|[\![ A\nabla w_h]\!]\|_{0,e}
\lesssim \|A\nabla (w_h - w) \|_{0,\omega_{e}}+
h_{e}\|n(x)(\lambda-1) w -n(x)(\lambda_{h}-1) w_{h} \|_{0,\omega_{e}}.\nonumber\\
\end{eqnarray}
Similarly, we can deduce
\begin{eqnarray}\label{s3.23}
h_{e}^{1/2}\|[\![ \nabla v_h]\!]\|_{0,e}
\lesssim \|\nabla (v_h - v) \|_{0,\omega_{e}}+
h_{e}\|(\lambda-1) v -(\lambda_{h}-1)v_{h} \|_{0,\omega_{e}}.\nonumber\\
\end{eqnarray}
(iii)~ For $e\in\mathcal{E}_h^b$,
let $\psi_{e}$ with $\mathrm{supp}(\psi_{e})\subset\omega_{e}$ be an extension of $\mathfrak{b}_{e}( A\nabla w_h- \nabla v_h )|_e$ satisfying
(\ref{s3.17}) and (\ref{s3.18}).
Then
\[
\begin{aligned}
&\int_{{e}} (( A\nabla w_h-\nabla v_h)\cdot\nu )\bar{\psi}_{e}\,\mathrm{d}s=\int_{{e}} (( A\nabla( w_h-w)-\nabla (v_h-v))\cdot\nu )\bar{\psi}_{e}\,\mathrm{d}s\\
=&\int_{\omega_{e}}A\nabla (w_h-w)\cdot \overline{\nabla \psi_{e}}\,\mathrm{d}x
+ \int_{\omega_{e}}\nabla\cdot (A\nabla( w_h-w))\bar{\psi}_{e} \,\mathrm{d}x \\
&~~~+\int_{\omega_{e}}\nabla (v_h-v)\cdot \overline{\nabla \psi_{e}}\,\mathrm{d}x
+ \int_{\omega_{e}}\nabla\cdot (\nabla(v_h-v))\bar{\psi}_{e} \,\mathrm{d}x \\
=&\int_{\omega_{e}}A\nabla (w_h - w) \cdot \overline{\nabla \psi_{e}}\,\mathrm{d}x
+ \int_{\omega_{e}}n(x)(\lambda-1)  w  \bar{\psi}_{e}\,\mathrm{d}x
+ \int_{\omega_{e}}\nabla\cdot (A\nabla w_h)\bar{\psi}_{e} \,\mathrm{d}x \\
&~~~+\int_{\omega_{e}}\nabla (v_h - v) \cdot \overline{\nabla \psi_{e}}\,\mathrm{d}x
+ \int_{\omega_{e}}(\lambda-1)v  \bar{\psi}_{e}\,\mathrm{d}x
+ \int_{\omega_{e}}\nabla\cdot (\nabla v_h)\bar{\psi}_{e} \,\mathrm{d}x \\
\lesssim & \left( h_{e}^{-1}\|A\nabla (w_h - w)  \|_{0,\omega_e}+\|
n(x)(\lambda-1) w  + \nabla\cdot (A\nabla w_h)\|_{0,\omega_e}\right)
\|\bar{\psi}_{e}\|_{0,\omega_e}\\
&~~~+ \left( h_{e}^{-1}\|\nabla (v_h -v)  \|_{0,\omega_e}+\|
(\lambda-1)v + \nabla\cdot (\nabla v_h)\|_{0,\omega_e} \right)
\|\bar{\psi}_{e}\|_{0,\omega_e}.
\end{aligned}
\]
Using (\ref{s3.17}) and (\ref{s3.18}), we obtain
\[
\begin{aligned}
&~~~\|( A\nabla w_h-\nabla v_h)\cdot\nu \|_{0,e}^2\\
& \lesssim(h_{e}^{-1/2}\|A\nabla (w_h - w)  \|_{0,\omega_e}+h_{e}^{1/2}\|n(x)(\lambda-1) w  + \nabla\cdot (A\nabla w_h)\|_{0,\omega_e})\|( A\nabla w_h-\nabla v_h)\cdot\nu \|_{0,e},\\
&~~~+
(h_{e}^{-1/2}\|\nabla (v_h - v)\|_{0,\omega_e}+h_{e}^{1/2}\|(\lambda-1)v  + \nabla\cdot (\nabla v_h)\|_{0,\omega_e})\|( A\nabla w_h-\nabla v_h)\cdot\nu \|_{0,e}.\\
\end{aligned}
\]
Thus, by (\ref{s3.20}) and (\ref{s3.21}) we deduce
\begin{eqnarray}\label{s3.24}
&&h_{e}^{1/2}\|( A\nabla w_h-\nabla v_h)\cdot\nu \|_{0,e}\lesssim\|A\nabla (w_h - w) \|_{0,\omega_e}+\|\nabla (v_h - v) \|_{0,\omega_e}\nonumber\\
&&~~~+h_{e}\|n(x)(\lambda-1) w  -n(x)(\lambda_{h}-1) w_h)\|_{0,\omega_{e}}\nonumber\\
&&~~~+h_{e}\|(\lambda-1)v -(\lambda_{h}-1) v_h)\|_{0,\omega_e}).
\end{eqnarray}
The proof is completed by substituting (\ref{s3.20})-(\ref{s3.24}) into (\ref{s3.1}).
\end{proof}

\section{Numerical experiments}\label{sec:sec4}
\indent Using the a posteriori error estimators in this paper and consulting the existing standard algorithms (see, e.g., \cite{dai1}), we present the following algorithm.\\
\noindent{\bf Algorithm 1}\\
\indent Choose the parameter $\theta\in(0,1)$.\\
\noindent{\bf Step 1.} Set $l=0$ and pick any initial mesh $\mathcal{T}_{h_l}$ with the mesh size $h_{l}$.\\
\noindent{\bf Step 2.} Solve (\ref{s2.21}) on $\mathcal{T}_{h_l}$ for discrete solution $\{(\lambda_{j,h_l},\mathbf{U}_{j,h_l})\}_{k}^{k+q-1}$ with
$\|\mathbf{U}_{j,h_l}\|_{\mathbb{V}}$
$=1$.\\
\noindent{\bf Step 3.} Compute the local estimators
$ \eta_{\kappa}(\mathbf{U}_{j,h_{l}})~(j=k,\cdots,k+q-1)$.\\
\noindent{\bf Step 4.} Construct $\hat{\mathcal{T}}_{h_l}\subset \mathcal{T}_{h_l}$ by {\bf Marking strategy E}.\\
\noindent{\bf Step 5.} Refine $\mathcal{T}_{h_l}$ to get a new mesh $\mathcal{T}_{h_{l+1}}$ by procedure {\bf Refine}.\\
\noindent{\bf Step 6.} $l\Leftarrow l+1$ and goto Step 2.\\

\noindent{\bf Marking Strategy E}\\
\indent Given parameter  $\theta\in(0,1)$.\\
\noindent{\bf Step 1.} Construct a minimal subset
$\widehat{\mathcal{T}}_{h_{l}}$ of $\mathcal{T}_{h_{l}}$ by selecting some elements
in $\mathcal{T}_{h_{l}}$ such that
\begin{equation*}
 \sum\limits_{\kappa\in\widehat{\mathcal{T}}_{h_{l}}}\sum\limits_{j=k}^{k+q-1}\eta^2_{\kappa}(\mathbf{U}_{j,h_{l}})
\geq
\theta\sum\limits_{\kappa\in\mathcal{T}_{h_l}}\sum\limits_{j=k}^{k+q-1}\eta^2_{\kappa}(\mathbf{U}_{j,h_{l}}).
\end{equation*}
\noindent{\bf Step 2.} Mark all the elements in
$\widehat{\mathcal{T}}_{h_{l}}$.\\
\noindent The above marking strategy was introduced by D$\ddot{o}$rfler \cite{dorfler}.
Our algorithm is easily realized under the common packages of the FEM, e.g., \cite{ifem,scikit-fem}, etc.\\
\indent Next we will provide some numerical examples to verify the theoretical convergence rates of our adaptive algorithm.\\
\indent Our program is partly completed under the Python package of scikit-fem \cite{scikit-fem} (version 5.2.0), then the discrete algebraic eigenvalue problems are solved by the command 'eigs' of MATLAB 2021b on a Lenovo xiaoxin Pro13.3 laptop with 16G memory.\\
\indent For discretizations, we use the standard Lagrange finite elements.
Let $DoF$ denote the number of degrees of freedom.
Let $S_h\subset H^1(\Omega)$ be the finite element space of degree $m$ on $\Omega$, and let $S_h^0=S_h\cap H_0^1(\Omega)$ and $S_h^B$ denote the subspace of functions in $S_h$ with vanishing $DoF$ on $\partial\Omega$ and the subspace of functions in $S_h$ with vanishing $DoF$ in $\Omega$, respectively.\\
\indent Let $N_h = \dim{(S_h)}, N_h^0=\dim{(S_h^0)}$ and $N_h^B=\dim{(S_h^B)}$. Let $\{\xi_i\}_{i=1}^{N_h}$ be a basis of $S_h$.
We set $\xi_i^0=\xi_i~(i=1,\cdots,N_h^0)$ and let $\{\xi_i^0\}_{i=1}^{N_{h}^0}$ be a basis of $S^{0}_h$,
and set $\xi_j^B=\xi_i~(i=N_h^0+1,\cdots,N_h,j=1,\cdots,N_h^B)$ and let $\{\xi_i^B\}_{i=1}^{N_h^B}$ be a basis of $S^{B}_h$, then for any $(w_h,v_h)\in\mathbb{V}_h$,
$$w_h = \sum_{i=1}^{N_{h}^0}w_i\xi_i + \sum_{i=1}^{N_{h}^B}w_{N_h^0+i}\xi_i,\quad
v_h = \sum_{i=1}^{N_h^0}v_i\xi_i  + \sum_{i=1}^{N_{h}^B}w_{N_h^0+i}\xi_i.$$
Denote $\boldsymbol{w}^0=(w_1,\cdots,w_{N_{h}^0})$,
$\boldsymbol{w}^B=(w_1,\cdots,w_{N_{h}^B})$
and
$\boldsymbol{v}^0=(v_1,\cdots,v_{N_{h}^0})$.\\
We specify the following matrices in the discrete case.
\begin{center} \footnotesize
\begin{tabular}{lllll}\hline
Matrix&Dimension&Definition\\\hline
$S_A$&$N_h\times N_h$&$s_{li}=\int_{\Omega} (A\nabla\xi_{i})\cdot \nabla\xi_{l}\,\mathrm{d}x$\\
$S$&$N_{h}\times N_{h}$&$s_{li}=\int_{\Omega}\nabla\xi_{i}\cdot \nabla\xi_{l}\,\mathrm{d}x$\\
$M_n$&$N_{h}\times N_{h}$&$m_{li}=\int_{\Omega} n(x)\xi_i\xi_l\,\mathrm{d}x$\\
$M$&$N_{h}\times N_{h}$&$m_{li}=\int_{\Omega}  \xi_i\xi_l\,\mathrm{d}x$\\
\hline
\end{tabular}
\end{center}
\bigskip
Then the discrete variational form (\ref{s2.21})
can be written as a generalized matrix eigenvalue problem:
\begin{equation}\label{s4.1}
(\mathcal{S}+\mathcal{M})\boldsymbol{x} = \lambda_h\mathcal{M}\boldsymbol{x}
\end{equation}
where $\boldsymbol{x}=(\boldsymbol{w}^0,\boldsymbol{v}^0,\boldsymbol{w}^B)^T$ and the matrices $\mathcal{S}$ and $\mathcal{M}$ are given by
\begin{eqnarray*}
\mathcal{S}=
\left(
\begin{array}{ccc}
{S}_A^{N_h^0\times N_h^0}&\quad\boldsymbol{0}\quad&{S}_A^{N_h^0\times N_h^B}\\
\boldsymbol{0}&\quad-{S}^{N_h^0\times N_h^0}\quad&-{S}^{N_h^0\times N_h^B}\\
{S}_A^{N_h^B\times N_h^0}&\quad-{S}^{N_h^B\times N_h^0}\quad&{S}_A^{N_h^B\times N_h^B}-{S}^{N_h^B\times N_h^B}\\
\end{array}
\right)
\end{eqnarray*}
and
\begin{eqnarray*}
\mathcal{M}=
\left(
\begin{array}{ccc}
{M}_n^{N_h^0\times N_h^0}&\quad\boldsymbol{0}\quad&{M}_n^{N_h^0\times N_h^B}\\
\boldsymbol{0}&\quad-{M}^{N_h^0\times N_h^0}\quad&-{M}^{N_h^0\times N_h^B}\\
{M}_n^{N_h^B\times N_h^0}&\quad-{M}^{N_h^B\times N_h^0}\quad&{M}_n^{N_h^B\times N_h^B}-{M}^{N_h^B\times N_h^B}\\
\end{array}
\right).
\end{eqnarray*}
\indent In our computation, the test domains are set to be the unit square $\Omega = (0,1)^2$
 and the L-shaped domain $\Omega = (-1,1)^2\backslash [0,1)\times (-1,0]$
for the two-dimensional cases and the Fichera domain $(-1,1)^3\backslash [0,1)^3$ for the three-dimensional case.
The coefficient matrix $A(x)$ and the index of refraction $n(x)$ are chosen as follows:
$$\mathrm{Case~1:~}A(x) = \frac{1}{8}\left(
  \begin{array}{cc}
    4+x_1^2 & x_1x_2\\
    x_1x_2 & 4+x_2^2\\
  \end{array}
\right),\quad n(x)=\frac{1}{4}+\frac{1}{8}(x_1+x_2);$$
$$\mathrm{Case~2:~}A(x) = \left(
  \begin{array}{cc}
    2+x_1^2 & x_1x_2\\
    x_1x_2 & 2+x_2^2\\
  \end{array}
\right),\quad n(x)=2+|x_1+x_2|;$$
$$\mathrm{Case~3:~}A(x) = \left(
  \begin{array}{ccc}
    2& 0&0\\
    0& 3&0\\
    0& 0&4\\
  \end{array}
\right),\quad n(x)=3.$$

\indent We use the sparse solver $eigs$ to solve the generalized matrix
eigenvalue problem (\ref{s4.1}) for eigenvalues. We denote $\mathrm{k}_{j}=\sqrt{\lambda_{j}-1}$, $\lambda_{j,h_{l}}$ the $j$th eigenvalue derived from the $l$th iteration using Algorithm 1 and  $\mathrm{k}_{j,h_{l}}=\sqrt{\lambda_{j,h_{l}}-1}$,
and denote $DoF_{j,l}$ the $DoF$ for the $j$th eigenvalue after $l$ iterations in our tables and figures. For comparison, we also denote $\lambda_j^{h}$ the $j$th eigenvalue computed on the uniform mesh and $\mathrm{k}_j^h=\sqrt{\lambda_j^h-1}$.
We take the marking parameter $\theta=0.5$ for the two-dimensional cases and  $\theta=0.25$ for the three-dimensional case, respectively.\\
\indent For the two-dimensional computation cases,
we use Algorithm 1 using the $P_{4}$ element to compute the problem on triangle meshes, and the numerical results are shown in Tables 1-2.
Comparing the results in Tables 1-2 with those in Tables 4-5 of \cite{xie2017},
we can see that with our adaptive method by using high order elements the same accurate approximations are obtained by fewer $DoF$.\\
\indent Since the exact eigenvalues of the problem on all test domains are unknown,
in order to investigate the convergence behavior, on the square we use the $P_4$ element to compute both real and complex eigenvalues and get $\mathrm{k}_1\approx2.6785666416746796$ with $DoF=23346$ after $21$ adaptive iterations
and $\mathrm{k}_5\approx 5.8251046826673-0.8502179043080 \mathrm{i}$ with $DoF=31666$ after $19$ adaptive iterations as the reference values for Case 1.
On the L-shaped domain, we take $\mathrm{k}_1\approx0.8739706737685$ with $DoF=44978$ after $31$ adaptive iterations and
$\mathrm{k}_6\approx3.0448394512079-0.0824124793275\mathrm{i}$ with $DoF=78818$ after $24$ adaptive iterations as the reference values for Case 2.\\
\indent Starting with the initial mesh given in Fig. 1, some adaptive refined meshes are shown in Figs. 2-5 and the curves of the absolute error of numerical eigenvalues are depicted in Figs. 6-7 using Lagrange elements of degree $m~(m=1,2,3,4)$ on triangle meshes. From Figs. 2-5 we can see that the singularities or less regularities of the eigenfunctions are mainly around the corners.
From Figs. 6-7 it can be seen that the error curves of the real eigenvalues $\mathrm{k}_{1,h_l}$ are basically parallel to a line with slope $-m$ using Lagrange elements of degree $m~(m=1,2,3,4)$, which indicate the adaptive algorithm can reach the optimal convergence order $O({DoF}^{-2m/d})$.
We also observe from Figs. 6-7 that the accuracy of the numerical eigenvalues on adaptive meshes is better than that on uniform and quasi-uniform meshes.\\
\indent For the three-dimensional case, we use Algorithm 1 with the $P_{2}$ element to compute the first real eigenvalue of the problem on tetrahedral meshes, and the numerical results are listed in Table 3.
To depict the error curves, we use the $P_{2}$ element with $DoF=187504$ to get $\mathrm{k}_1\approx1.06768887$  as a reference value for Case 3 after $26$ adaptive iterations. 
It is easy to observe from Fig. 9  that the approximations of eigenvalue reach the optimal convergence order.

\begin{figure*}
\begin{minipage}[t]{0.50\textwidth}
\centering
\includegraphics[width=2.3in]{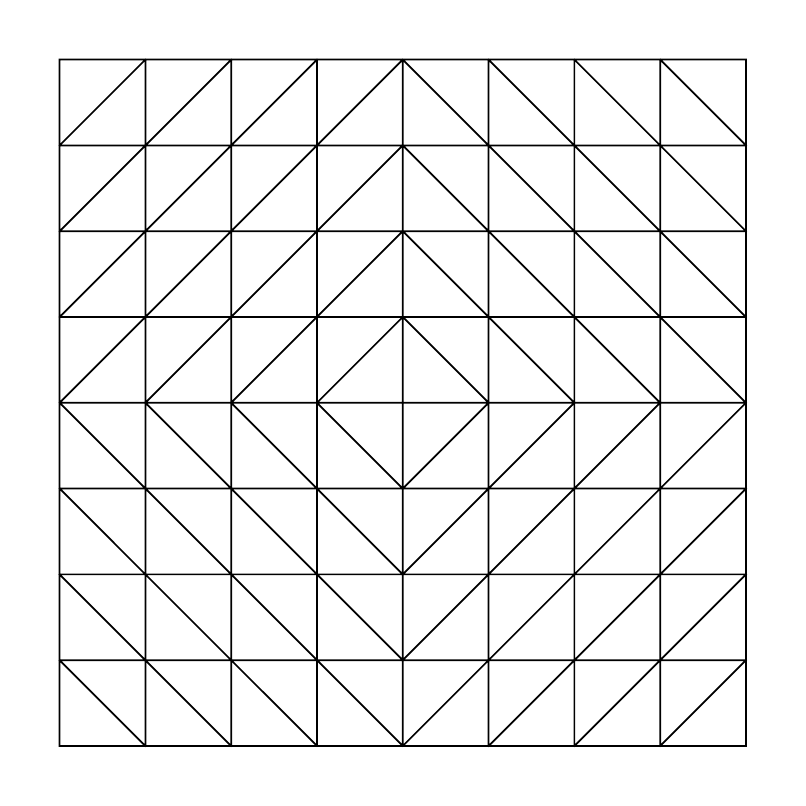}
\label{fig:side:a}
\end{minipage}%
\begin{minipage}[t]{0.50\textwidth}
\centering
\includegraphics[width=2.3in]{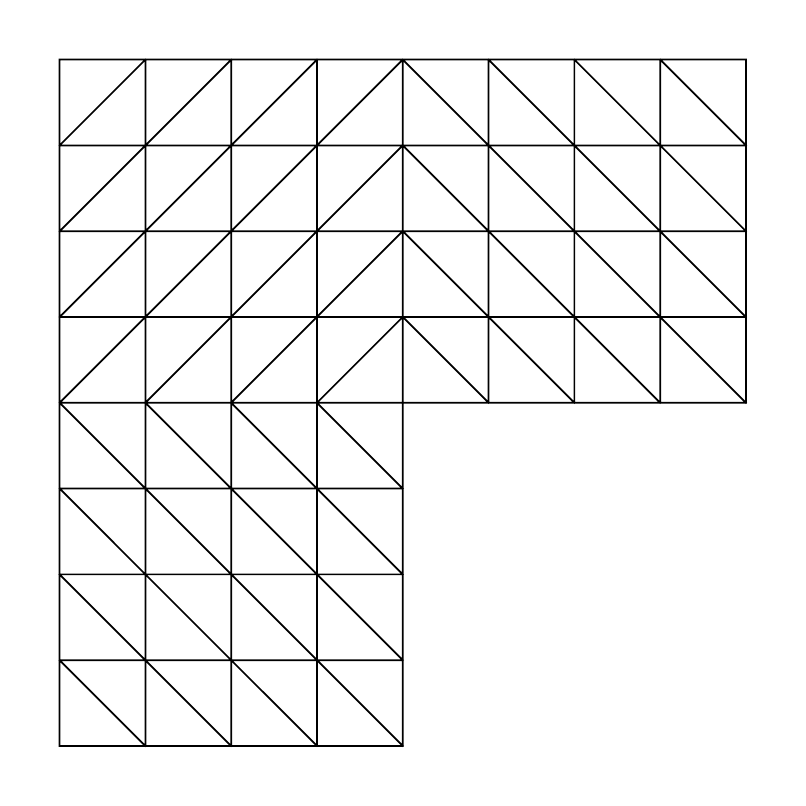}
\label{fig:side:b}
\end{minipage}
\caption{Initial mesh for adaptive method. The unit square (left) with $h=\sqrt{2}/8$ and the L-shaped domain(right) with $h=\sqrt{2}/4$.}
\end{figure*}

\begin{figure*}
\begin{minipage}[t]{0.50\textwidth}
\centering
\includegraphics[width=2.3in]{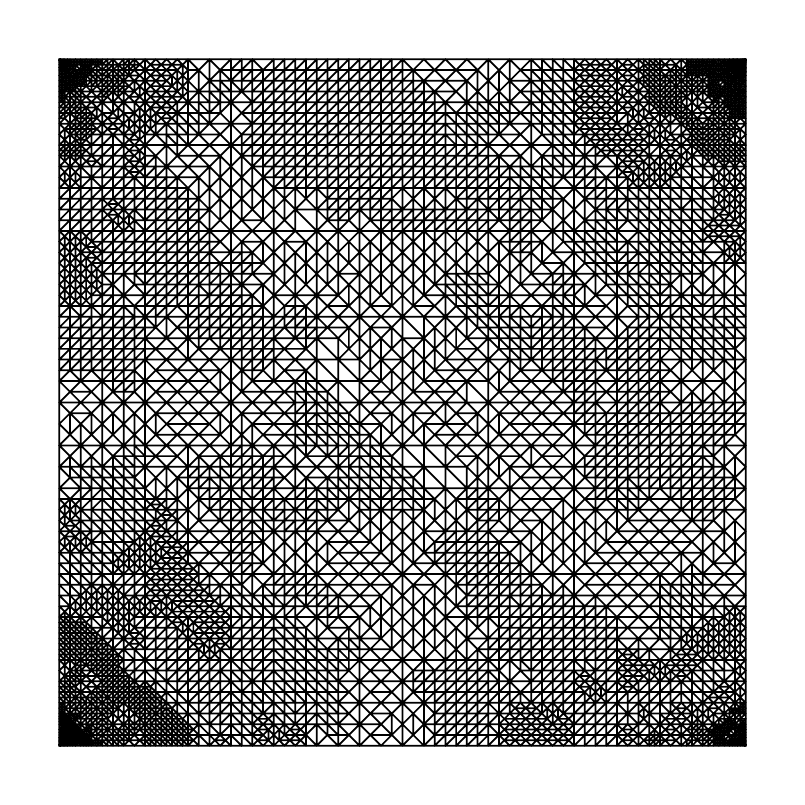}
\label{fig:side:a}
\end{minipage}%
\begin{minipage}[t]{0.50\textwidth}
\centering
\includegraphics[width=2.3in]{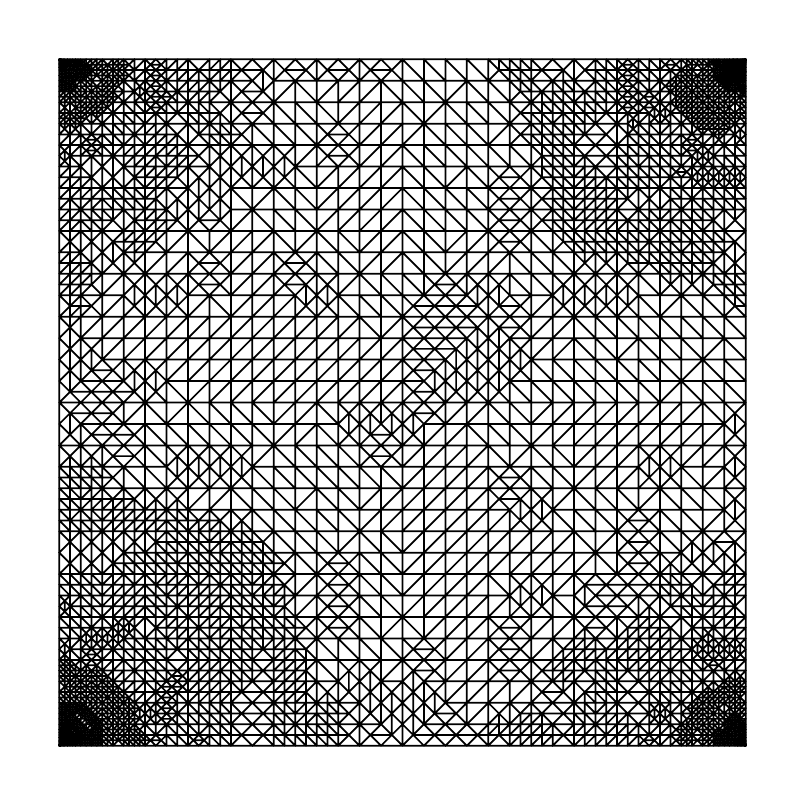}
\label{fig:side:b}
\end{minipage}
\caption{Adaptive meshes for the first eigenvalue on the unit square for Case 1 obtained by the $P_3$ element (left) and the $P_4$ element (right), respectively. }
\end{figure*}

\begin{figure*}
\begin{minipage}[t]{0.50\textwidth}
\centering
\includegraphics[width=2.3in]{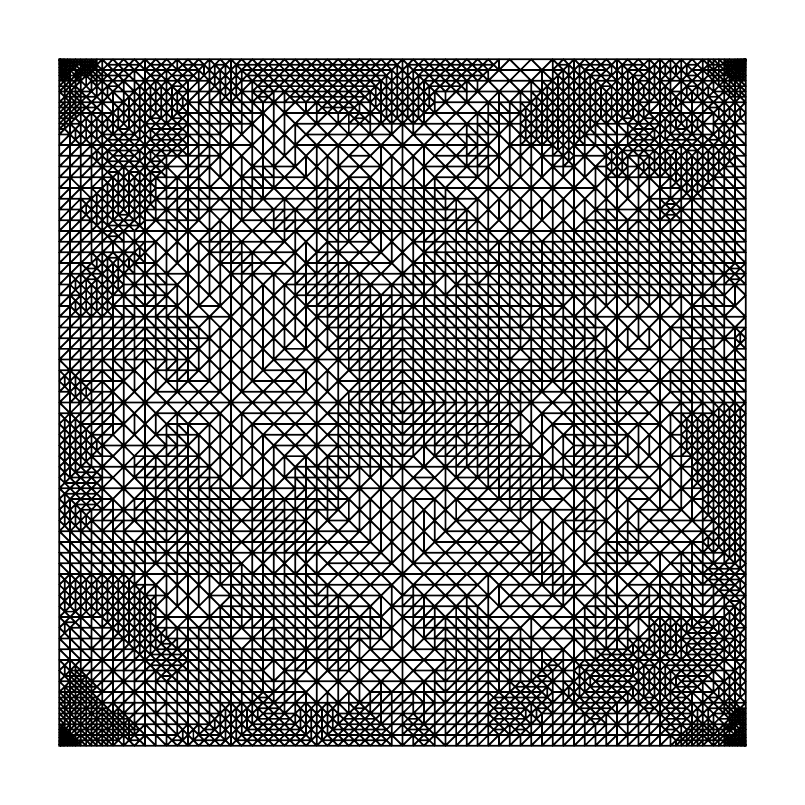}
\label{fig:side:a}
\end{minipage}%
\begin{minipage}[t]{0.50\textwidth}
\centering
\includegraphics[width=2.3in]{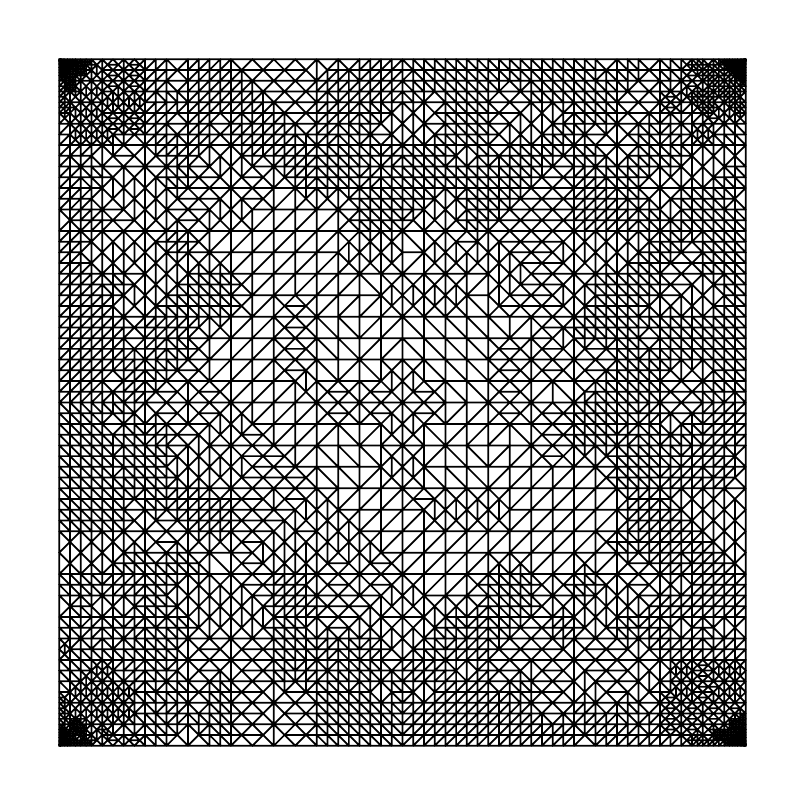}
\label{fig:side:b}
\end{minipage}
\caption{Adaptive meshes for the fifth eigenvalue on the unit square  for Case 1 obtained by the $P_3$ element (left) and the $P_4$ element (right), respectively. }
\end{figure*}

\begin{figure*}
\centering
\includegraphics[width=2.3in]{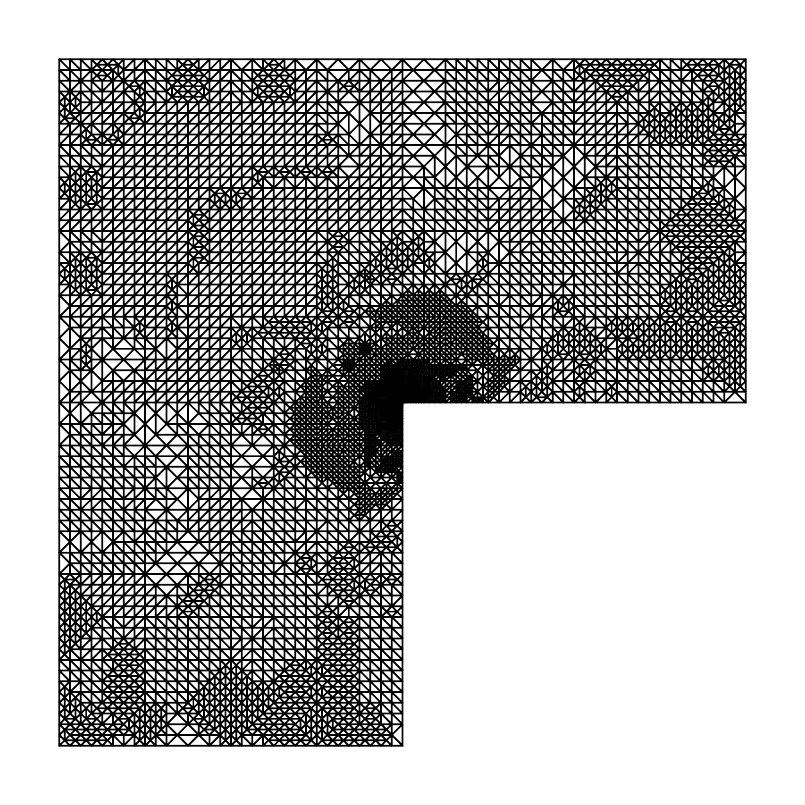}
\includegraphics[width=2.3in]{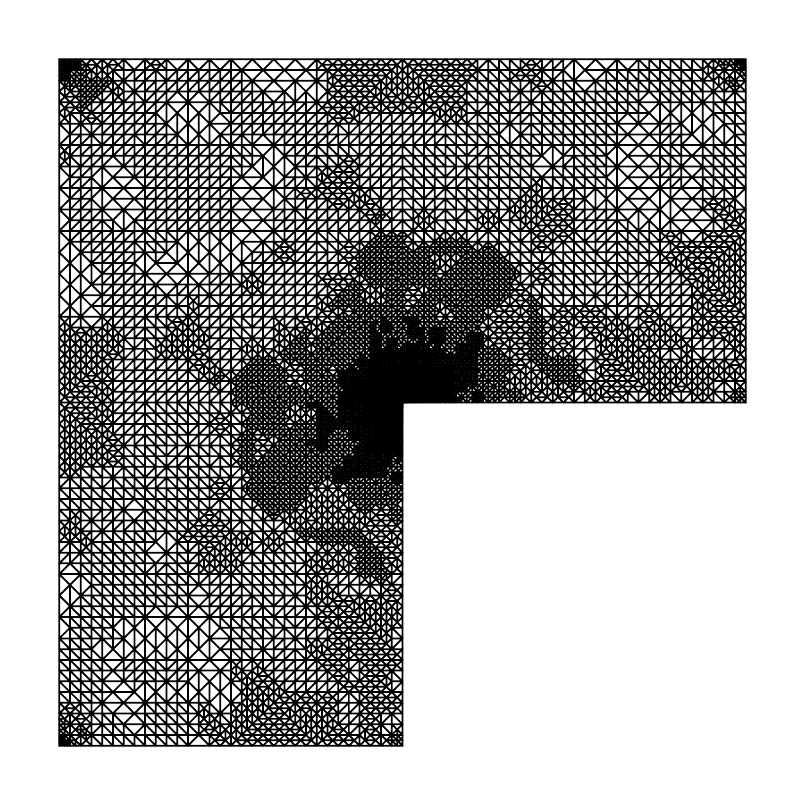}
\includegraphics[width=2.3in]{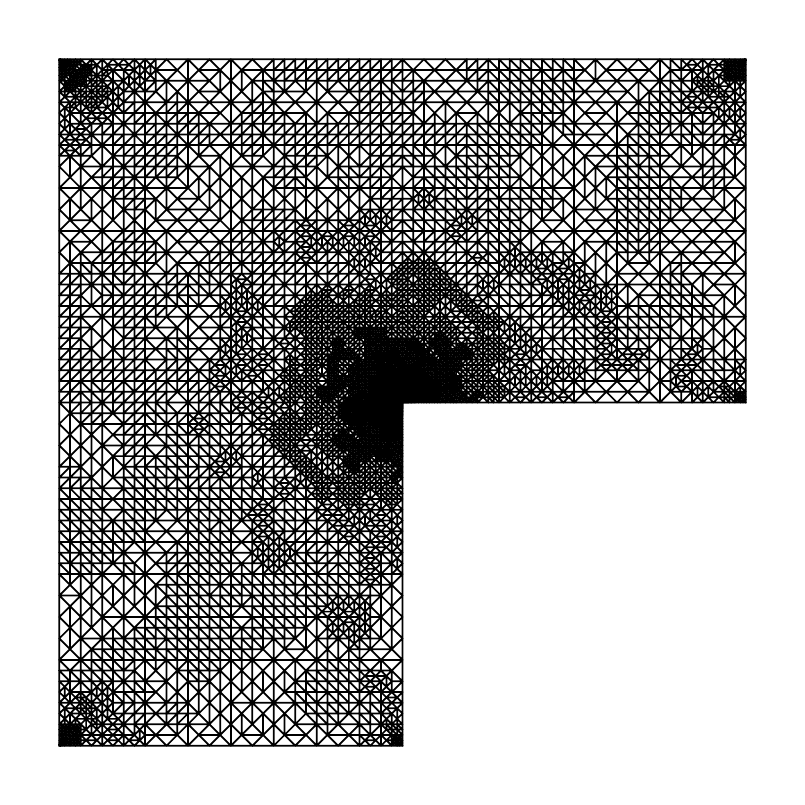}
\includegraphics[width=2.3in]{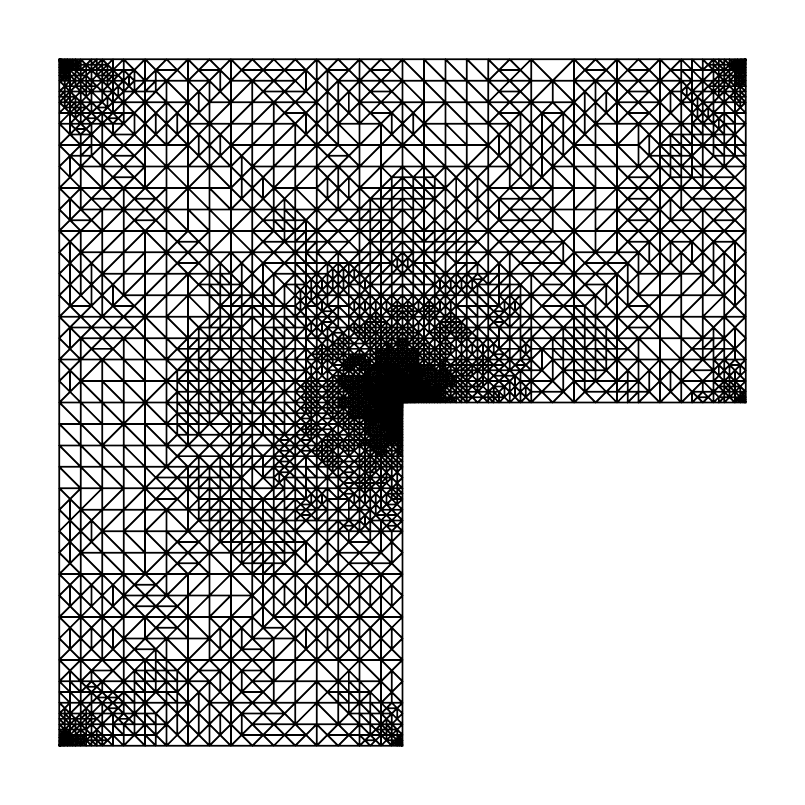}
\caption{Adaptive meshes for the first eigenvalue on the L-shaped domain for Case 2 obtained by the $P_1$ element (top left), the $P_2$ element (top right), the $P_3$ element (bottom left) and the $P_4$ element (bottom right) respectively.}
\end{figure*}

\begin{figure*}
\centering
\includegraphics[width=2.3in]{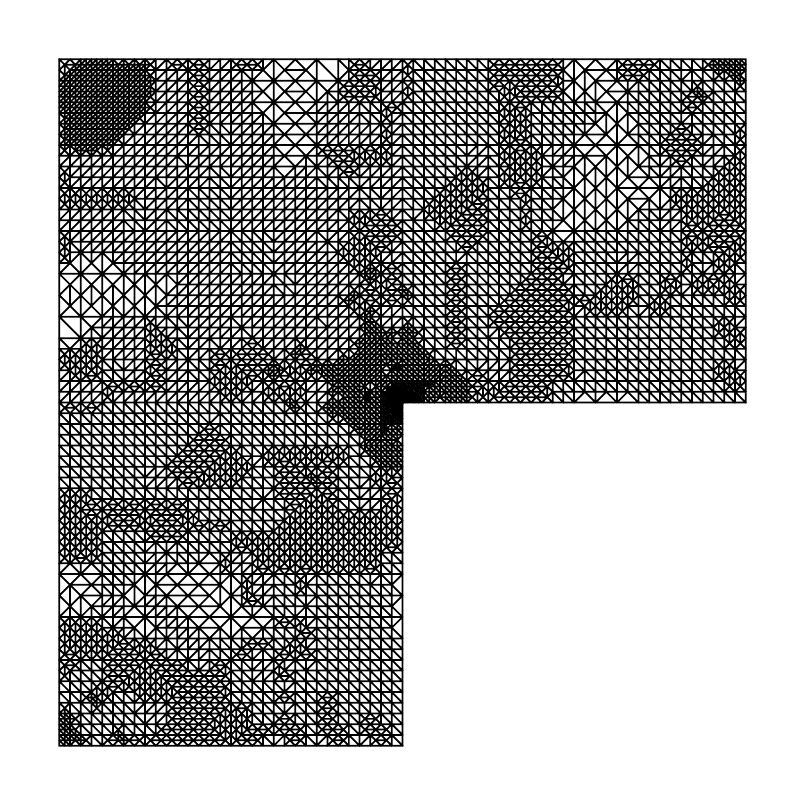}
\includegraphics[width=2.3in]{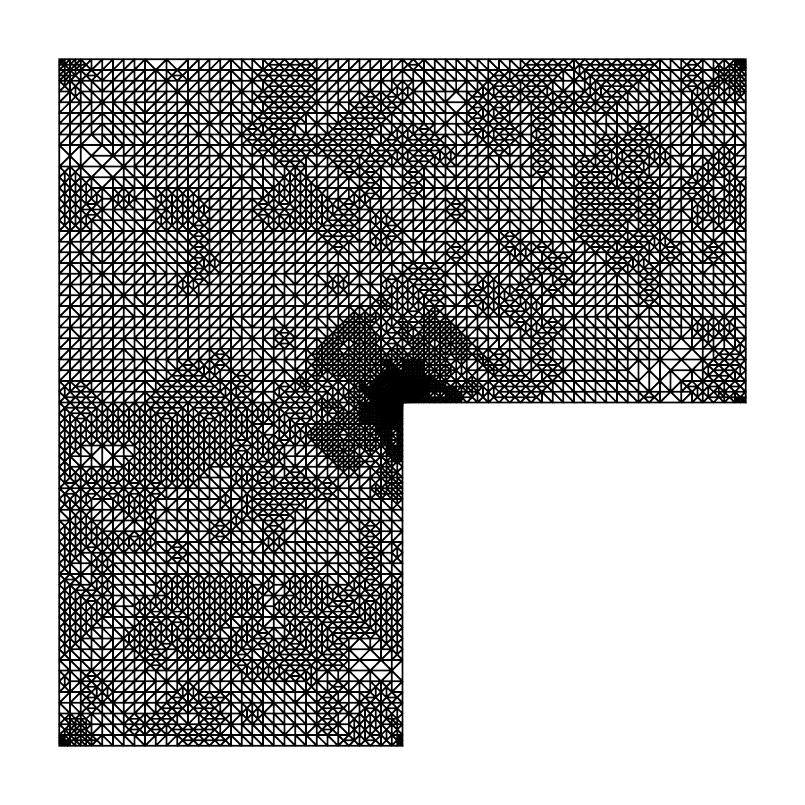}
\includegraphics[width=2.3in]{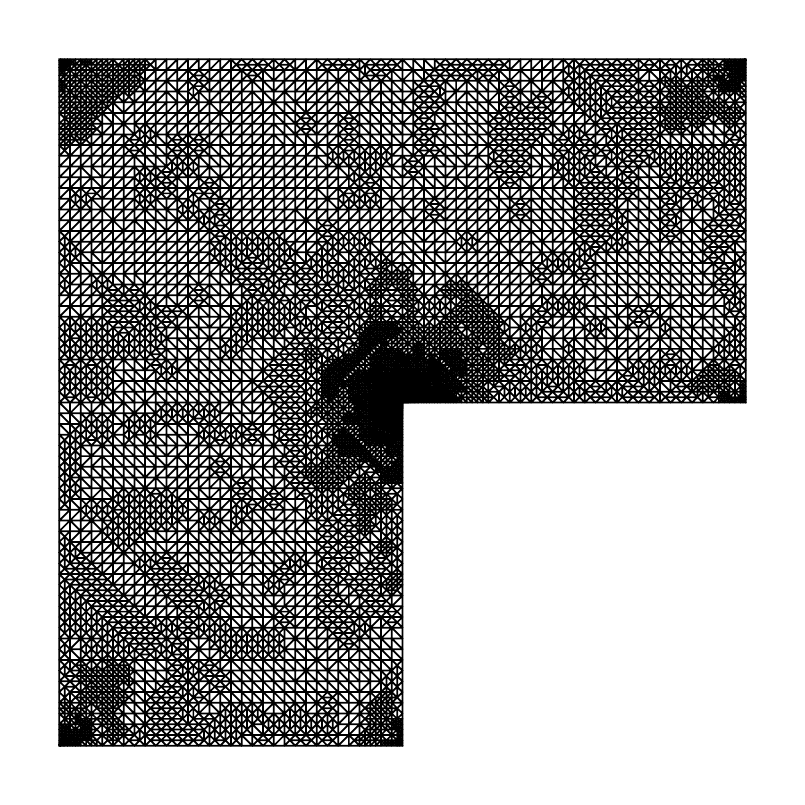}
\includegraphics[width=2.3in]{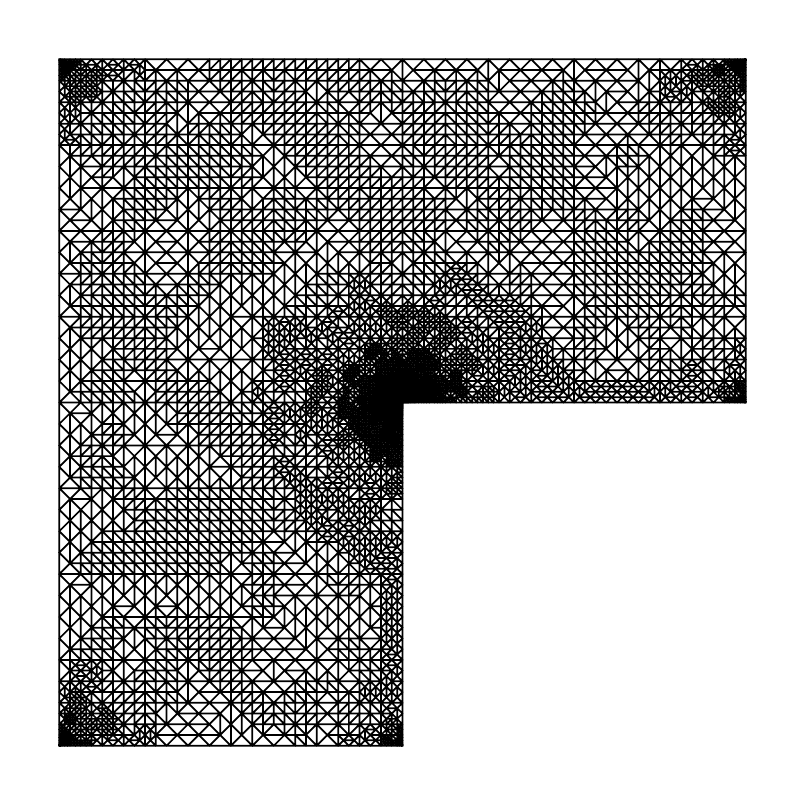}
\caption{Adaptive meshes for the sixth eigenvalue on the L-shaped domain for Case 2 obtained  by the $P_1$ element (top left), the $P_2$ element (top right), the $P_3$ element (bottom left) and the $P_4$ element (bottom right) respectively. }
\end{figure*}

\begin{figure*}
\begin{minipage}[t]{0.50\textwidth}
\centering
\includegraphics[width=2.65in]{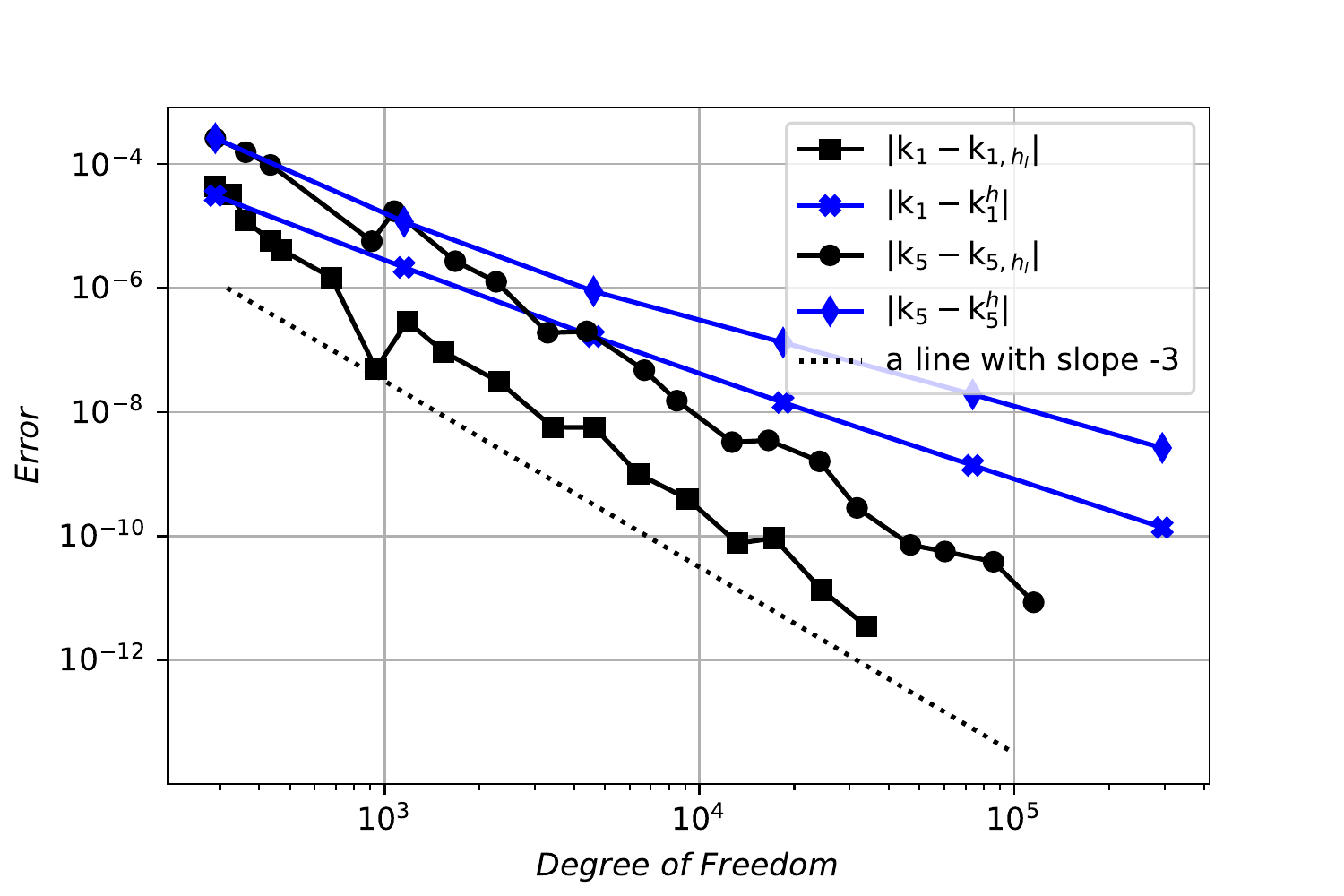}
\label{fig:side:a}
\end{minipage}%
\begin{minipage}[t]{0.50\textwidth}
\centering
\includegraphics[width=2.65in]{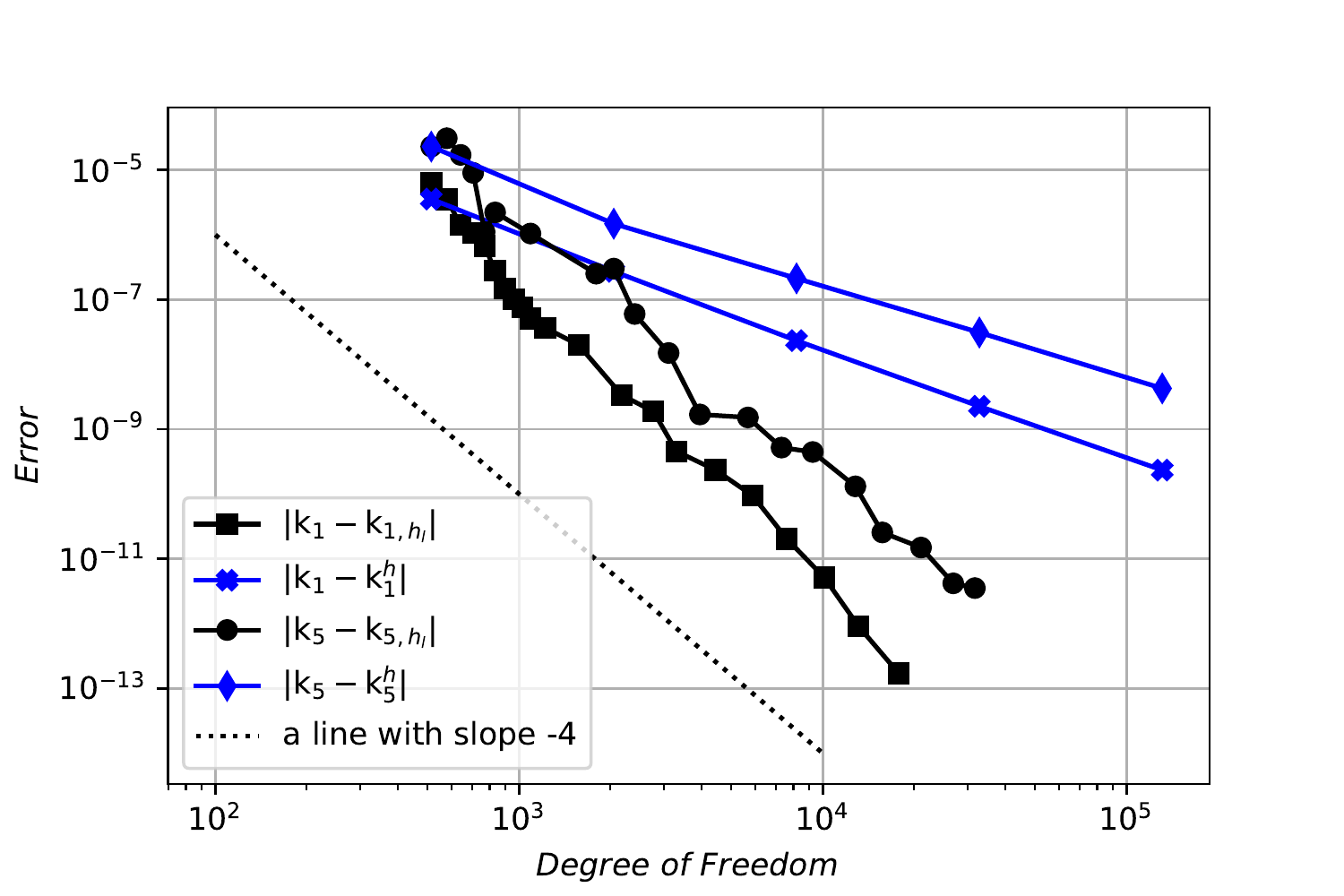}
\label{fig:side:b}
\end{minipage}
\caption{Error curves for the first and fifth eigenvalues on the unit square for Case 1 obtained by the $P_3$ element (left) and the $P_4$ element (right), respectively.}
\end{figure*}

\begin{figure*}
\begin{minipage}[t]{0.50\textwidth}
\centering
\includegraphics[width=2.65in]{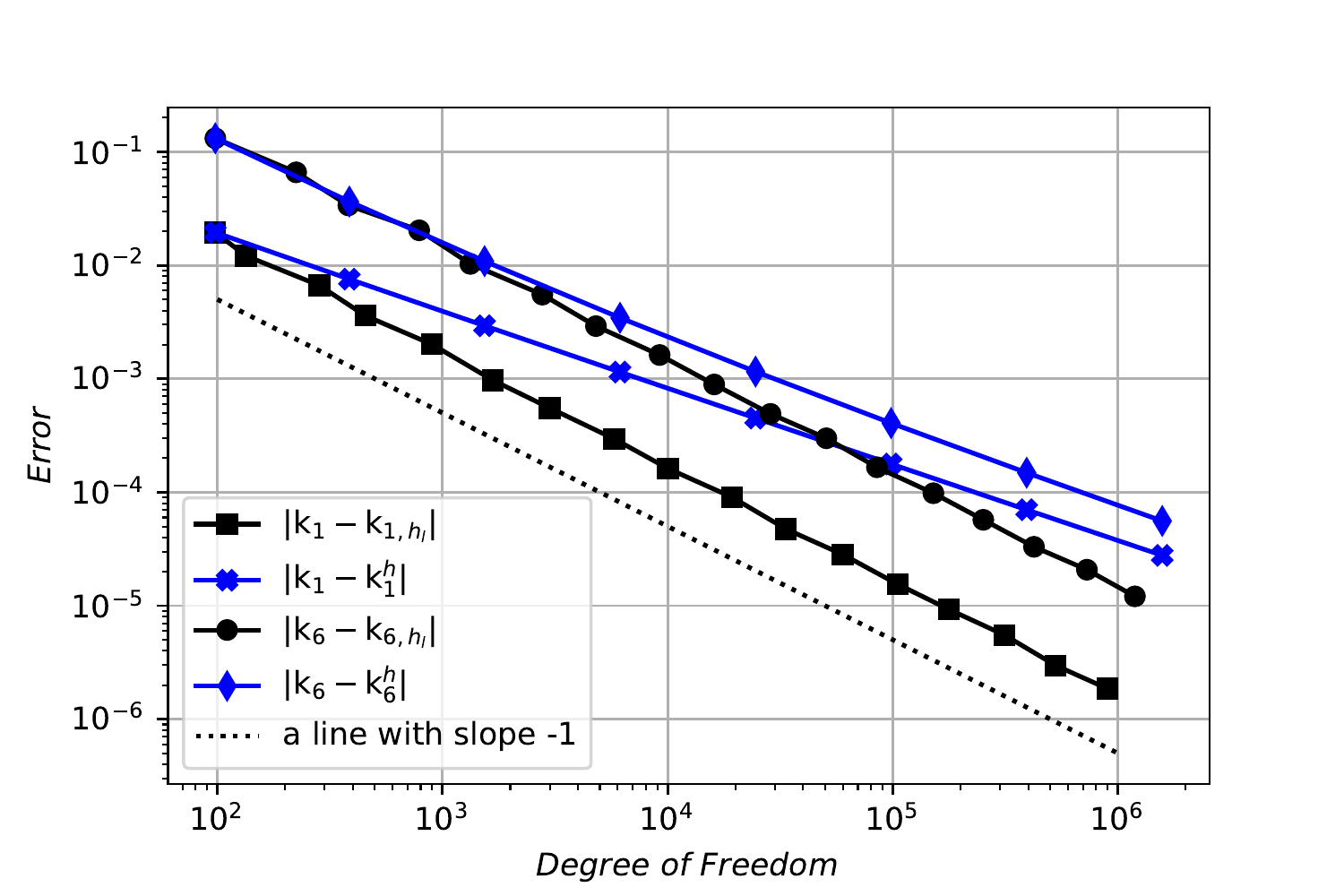}
\label{fig:side:a}
\end{minipage}%
\begin{minipage}[t]{0.50\textwidth}
\centering
\includegraphics[width=2.65in]{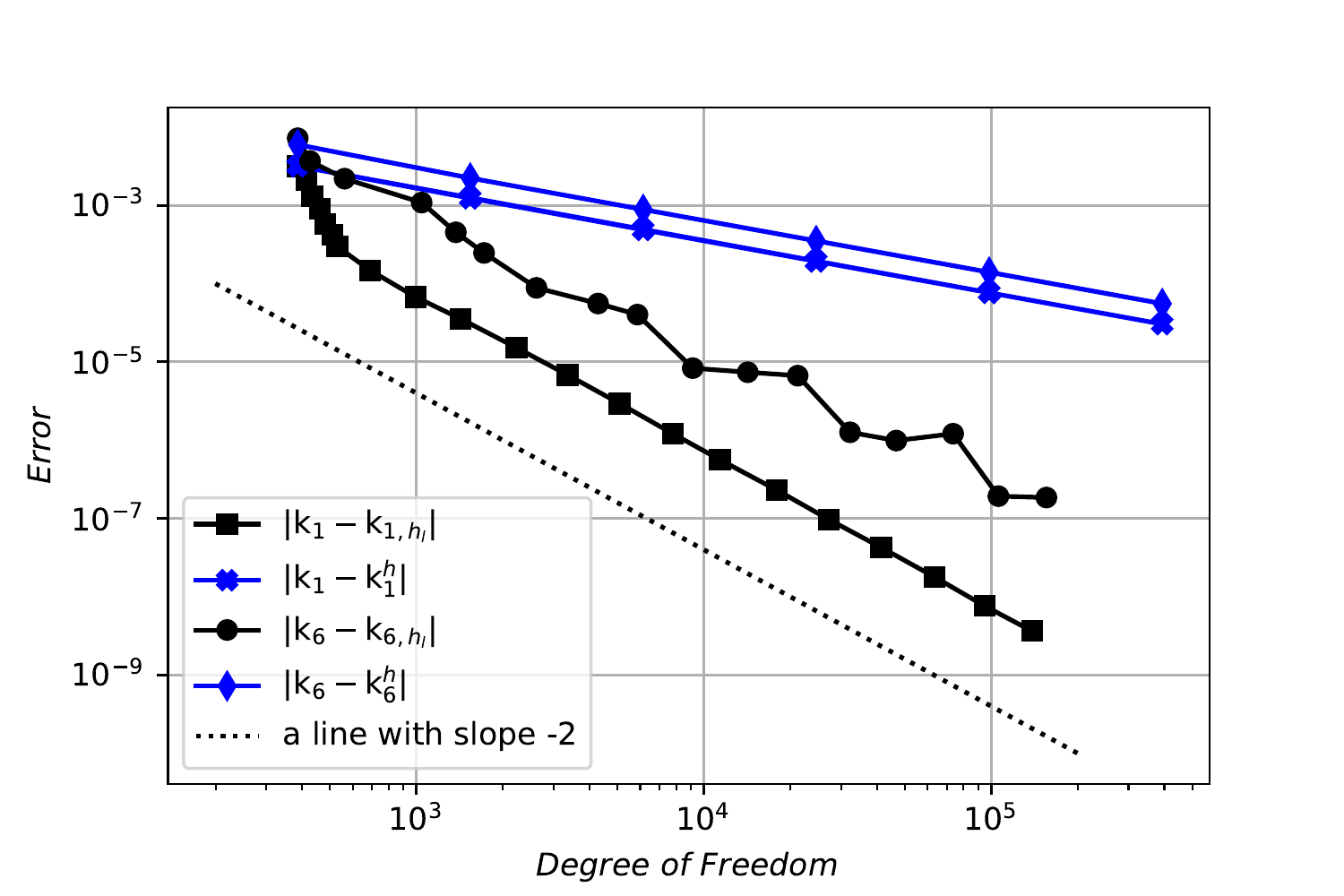}
\label{fig:side:b}
\end{minipage}
\begin{minipage}[t]{0.50\textwidth}
\centering
\includegraphics[width=2.65in]{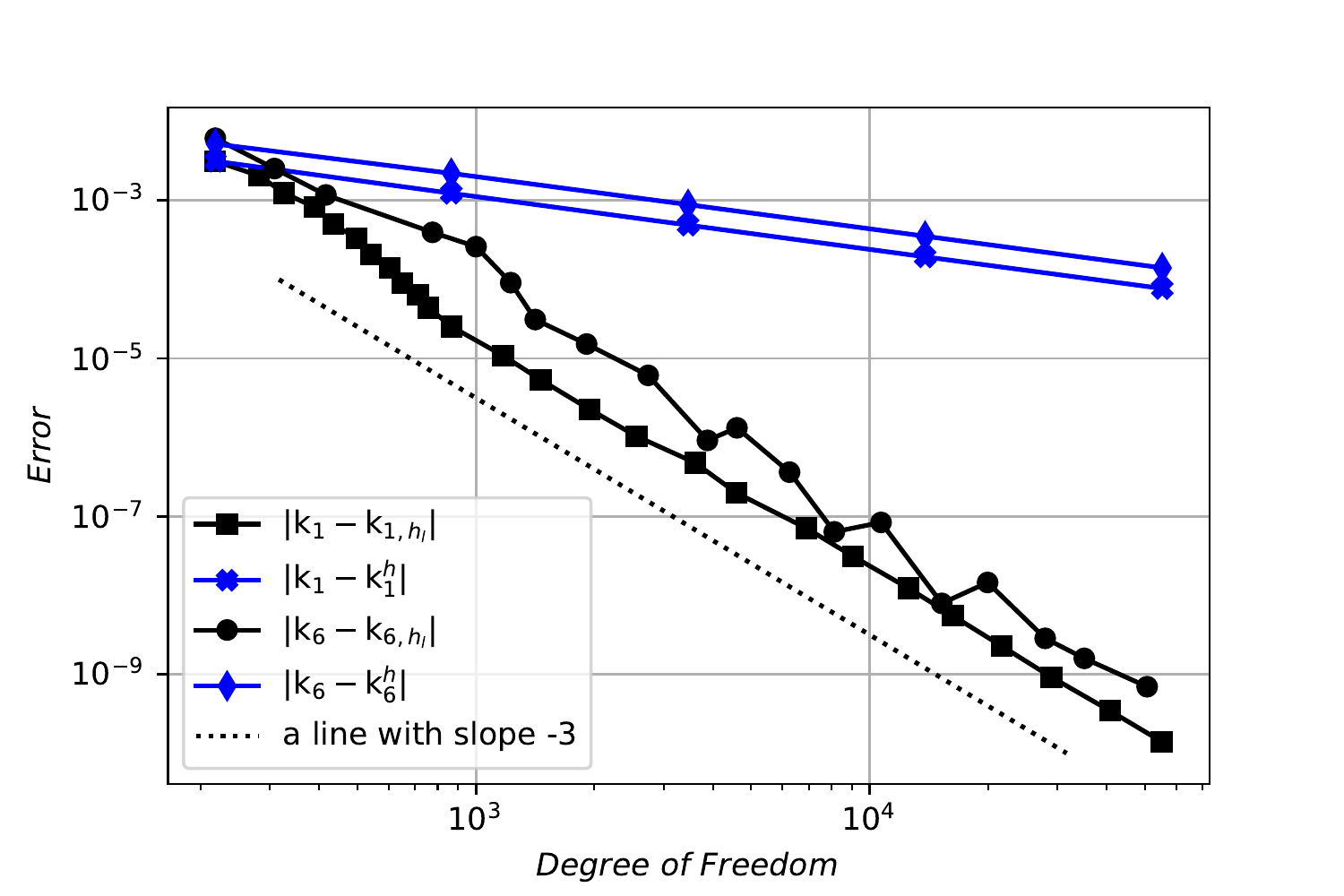}
\label{fig:side:a}
\end{minipage}%
\begin{minipage}[t]{0.50\textwidth}
\centering
\includegraphics[width=2.65in]{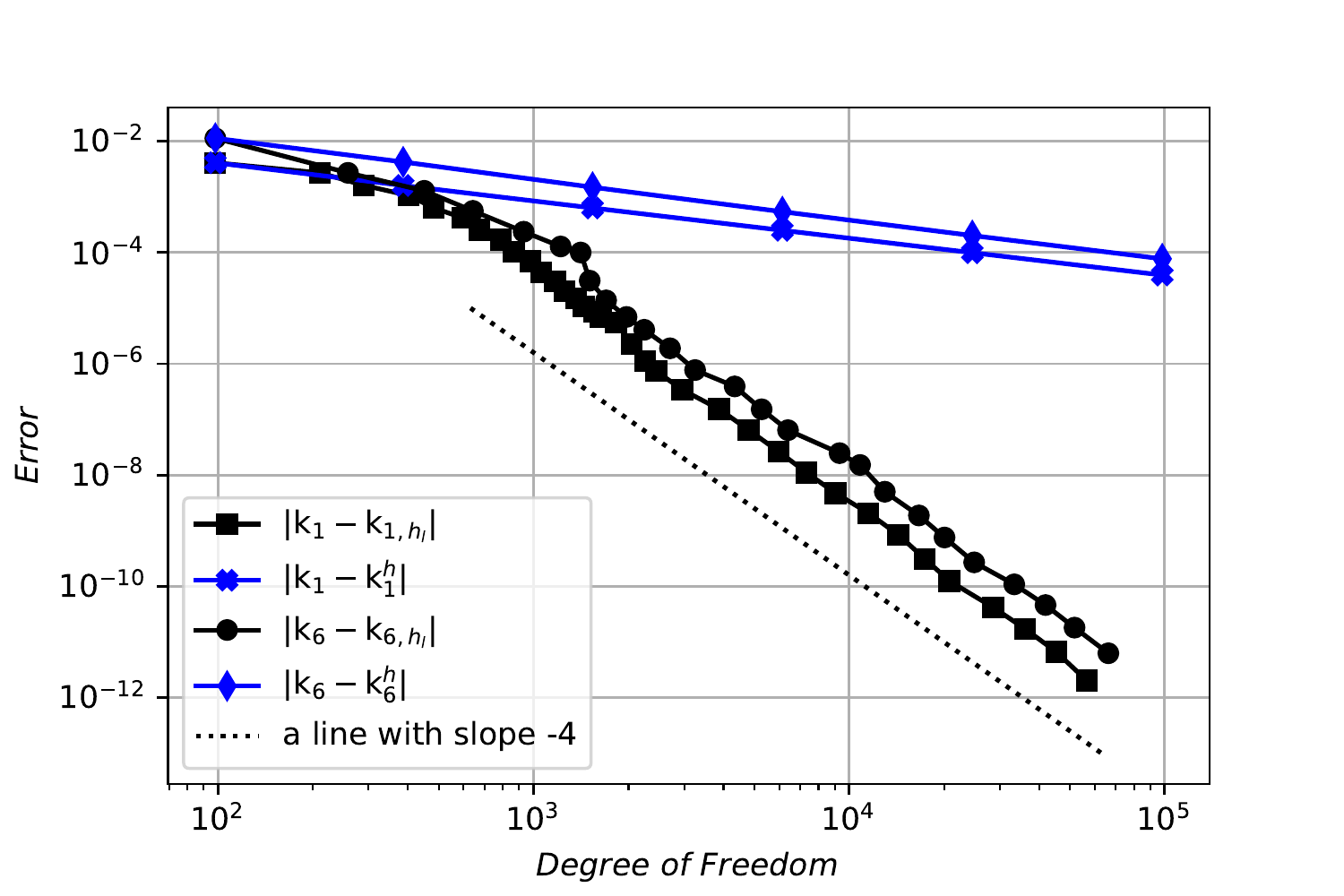}
\label{fig:side:b}
\end{minipage}
\caption{Error curves for the first and sixth eigenvalues on the L-shaped domain for Case 2 obtained by the $P_1$ element (top left), the $P_2$ element (top right), the $P_3$ element (bottom left) and the $P_4$ element (bottom right) respectively.}
\end{figure*}

\begin{table}[htb]
\caption{\label{table.label} The approximate eigenvalues on the unit square for Case 1 obtained by the $P_4$ element with the initial uniform mesh $(h_0=\sqrt{2}/4)$.}
\centering
\bigskip
\begin{small}
\begin{tabular}{cccccccc}
\hline
$j$ & $l$ & ${DoF}_{j,l}$ & $\mathrm{k}_{j,h_l}$&$j$ & $l$ & ${DoF}_{j,l}$ & $\mathrm{k}_{j,h_l}$                                              \\ \hline
1   &0&514&2.678560407762189    &5,6   &0&514    &5.8250906361254$\pm$0.8501997542054$\mathrm{i}$ \\
1   &5&834&2.6785663637974375   &5,6   &5&834    &5.8251047037874$\pm$0.8502156894591$\mathrm{i}$ \\
1   &10&1218&2.6785666048173606 &5,6   &10&3106  &5.8251046903523$\pm$0.8502179171864$\mathrm{i}$ \\
1   &15&4434&2.678566641438936  &5,6   &13&7314  &5.8251046825744$\pm$0.8502179048201$\mathrm{i}$ \\
1   &16&5874&2.6785666415793807&5,6   &14&9266  &5.8251046828306$\pm$0.8502179047199$\mathrm{i}$ \\
1   &17&7618&2.678566641653595&5,6   &15&12802 &5.8251046826660$\pm$0.8502179044386$\mathrm{i}$ \\
1   &18&10114&2.678566641668692&5,6   &16&15698 &5.8251046826741$\pm$0.8502179042839$\mathrm{i}$ \\
1   &19&13106&2.6785666416729192&5,6   &17&21058 &5.8251046826744$\pm$0.8502179042968$\mathrm{i}$ \\
1   &20&17778&2.6785666416740033&5,6   &18&26882 &5.8251046826680$\pm$0.8502179043068$\mathrm{i}$\\
1   &21&23346&2.6785666416746796&5,6   &19&31666 &5.8251046826673$\pm$0.8502179043080$\mathrm{i}$ \\ \hline
\end{tabular}
\end{small}
\end{table}

\begin{table}[htb]
\caption{\label{table.label} The approximate eigenvalues on the L-shaped domain for Case 2 obtained by the $P_4$ element with the initial uniform mesh $(h_0=\sqrt{2}/2)$.}
\centering
\bigskip
\begin{small}
\begin{tabular}{cccccccc}
\hline
$j$ & $l$ & ${DoF}_{j,l}$ & $\mathrm{k}_{j,h_l}$ &$j$ & $l$ & ${DoF}_{j,l}$ & $\mathrm{k}_{j,h_l}$           \\ \hline
1   & 0&386   &0.8755661700754&6,7 & 0   &386    &3.0490526389229$\pm$0.0822453289680 $\mathrm{i}$ \\
1   & 5&882   &0.8741362387888&6,7 & 5&1314   &3.0467668650725$\pm$0.0815300508826 $\mathrm{i}$ \\
1   & 10&1346 &0.8739876304907&6,7 & 10&3138  &3.0448876037668$\pm$0.0824072414907$\mathrm{i}$ \\
1   & 15&1842 &0.8739736924251&6,7 &15&11074 &3.0448402133483$\pm$0.0824124057516 $\mathrm{i}$ \\
1   & 20&3586 &0.8739708340761&6,7 &19&27554&3.0448395440036$\pm$0.0824124707211$\mathrm{i}$ \\
1   & 25&10818&0.8739706756303&6,7 &20&34946&3.0448395438721$\pm$0.0824124707252$\mathrm{i}$ \\
1   & 28&22242&0.8739706738797&6,7 &21&42210&3.0448394615373$\pm$0.0824124783664$\mathrm{i}$ \\
1   & 29&29410&0.8739706738016&6,7 &22&52770&3.0448394615119$\pm$0.0824124783794$\mathrm{i}$ \\
1   & 30&35762&0.8739706737785&6,7 &23&66114&3.0448394614984$\pm$0.0824124783762$\mathrm{i}$ \\
1   & 31&44978&0.8739706737685&6,7 &24&78818&3.0448394512079$\pm$0.0824124793275$\mathrm{i}$\\ \hline
\end{tabular}
\end{small}
\end{table}


\begin{figure*}
\begin{minipage}[l]{0.50\textwidth}
\centering
\includegraphics[width=2.3in]{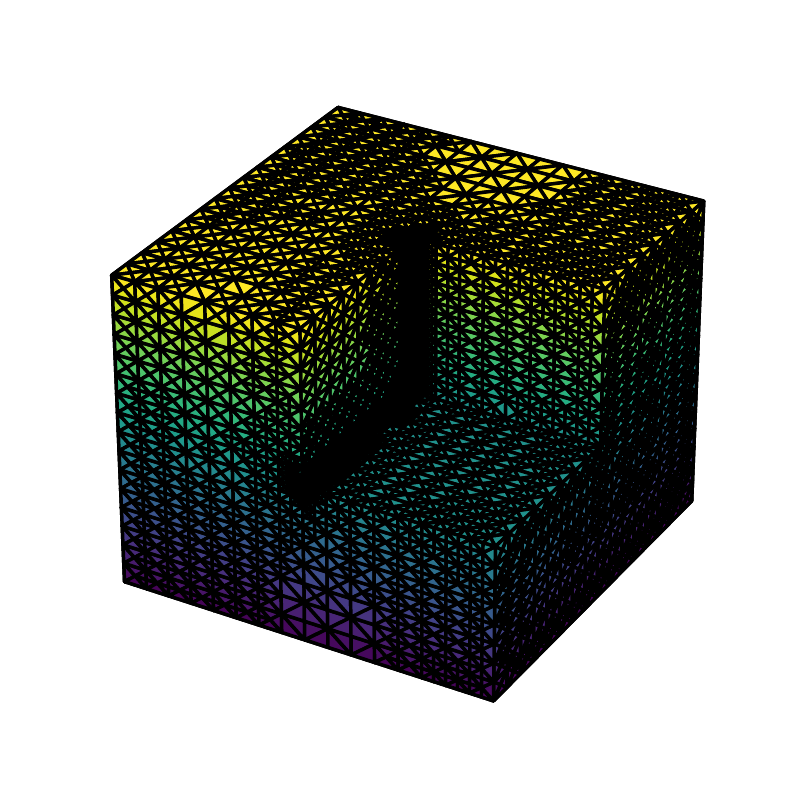}
\label{fig:side:a}
\end{minipage}%
\begin{minipage}[r]{0.50\textwidth}
\centering
\includegraphics[width=2.3in]{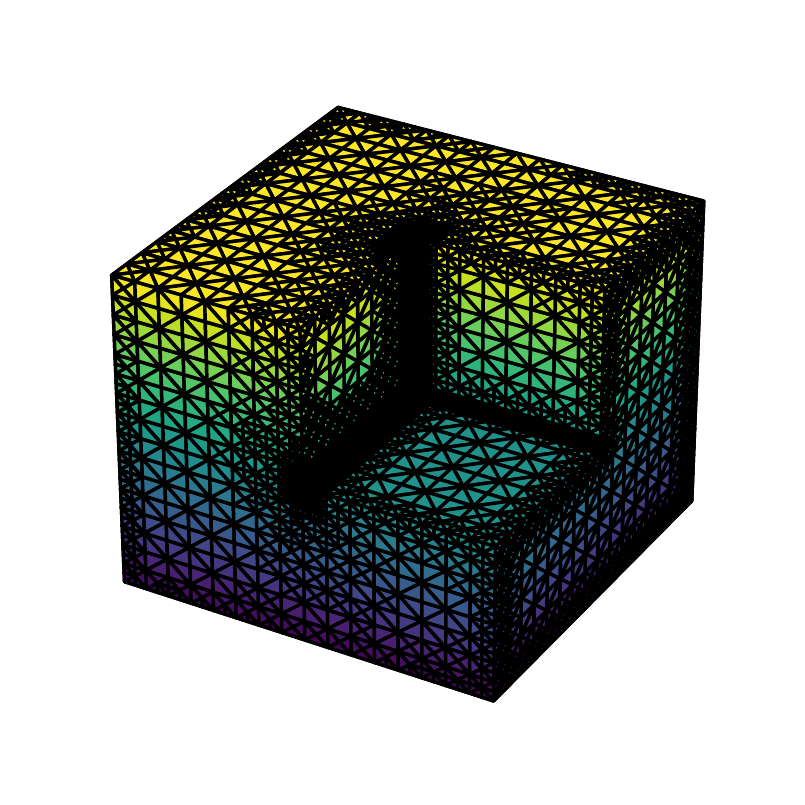}
\label{fig:side:b}
\end{minipage}
\caption{The surfaces of adaptive meshes for the first eigenvalue on the Fichera domain for Case 3 obtained  by the $P_1$ element (left) and the $P_2$ element (right), respectively}
\end{figure*}

\begin{figure*}
\begin{minipage}[l]{0.50\textwidth}
\centering
\includegraphics[width=2.3in]{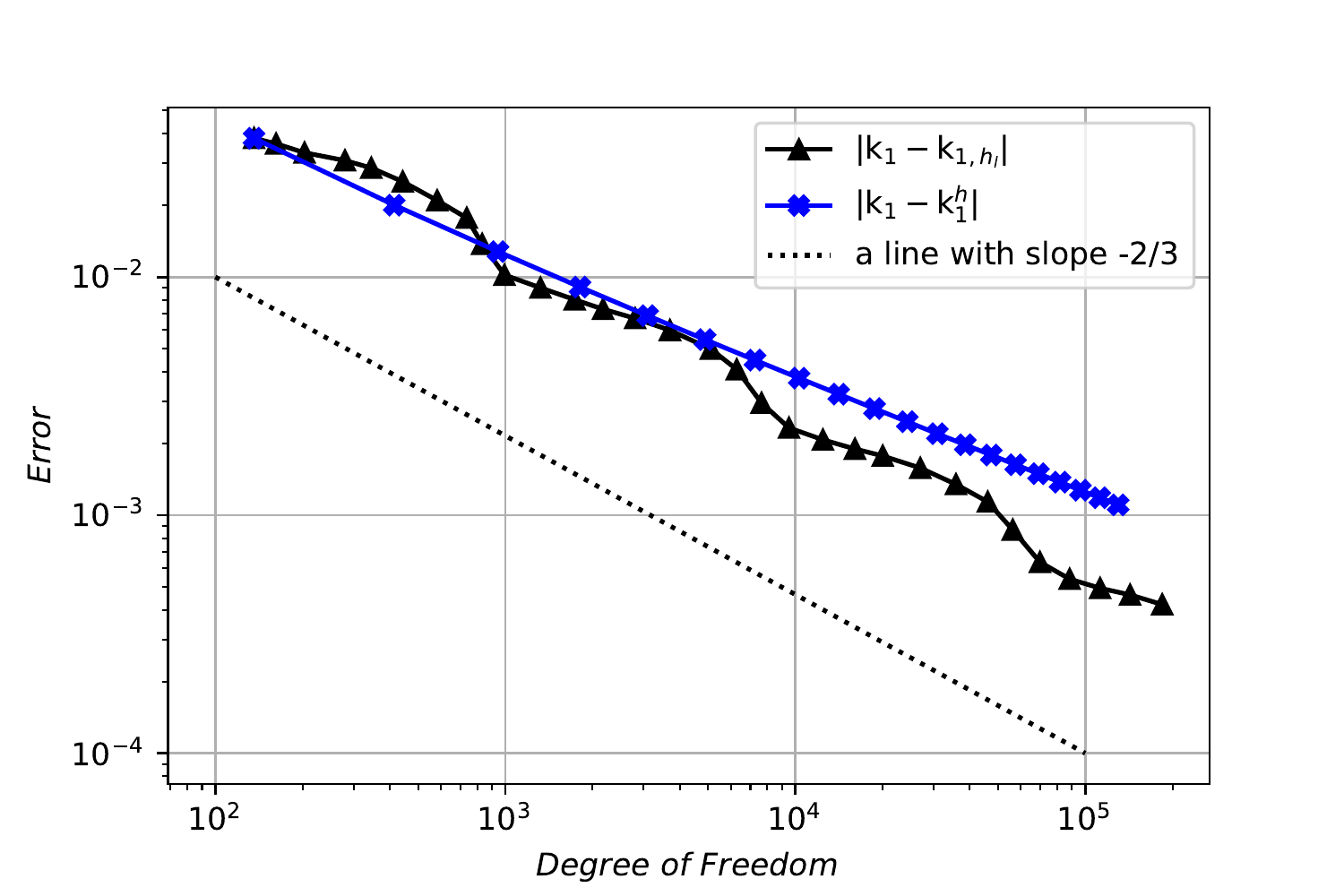}
\label{fig:side:a}
\end{minipage}%
\begin{minipage}[r]{0.50\textwidth}
\centering
\includegraphics[width=2.3in]{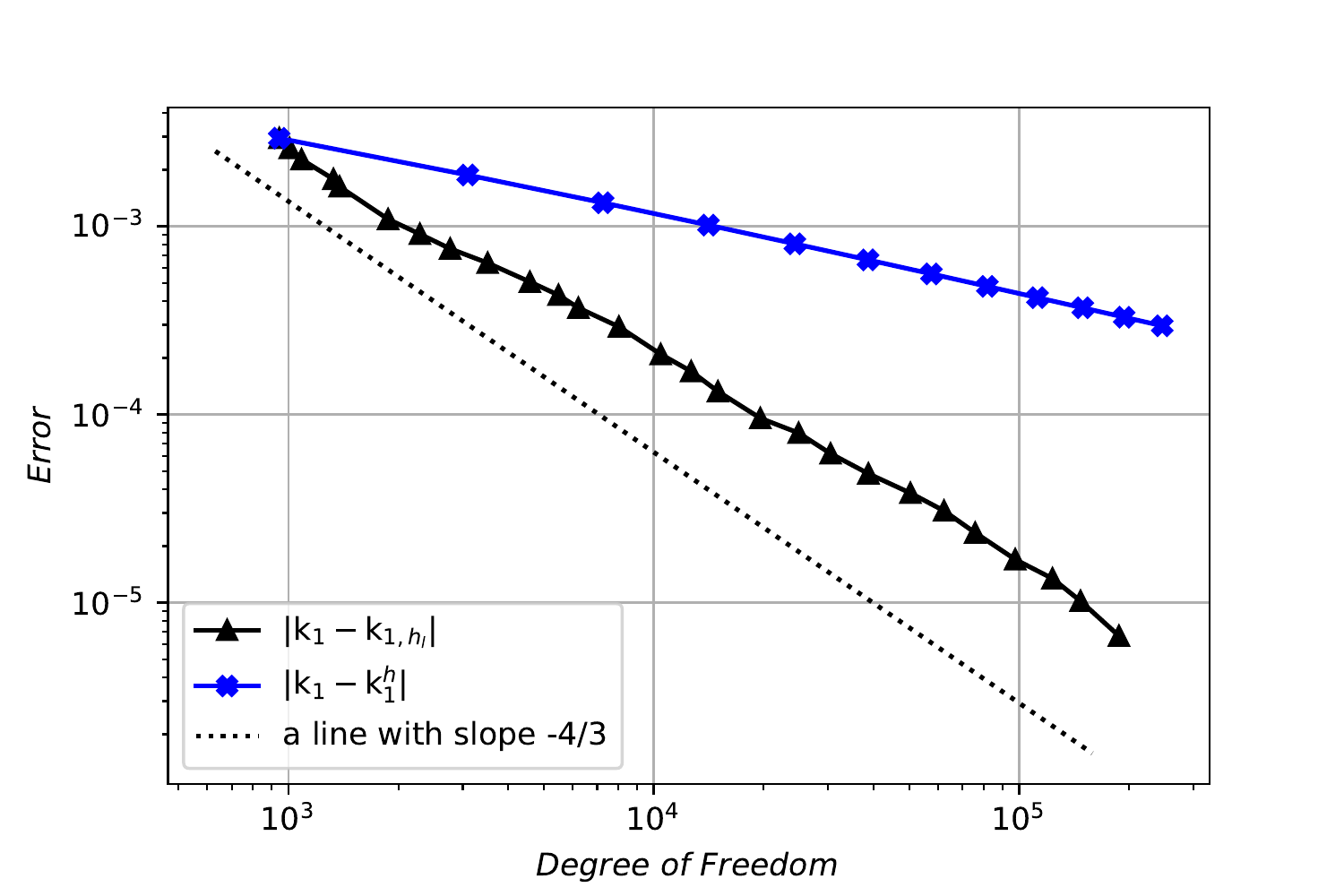}
\label{fig:side:b}
\end{minipage}
\caption{Error curves for the first eigenvalue on the Fichera domain for Case 3 obtained by the $P_1$ element (left) and the $P_2$ element (right), respectively. }
\end{figure*}

\begin{table}[htb]
\caption{\label{table.label} The first approximate eigenvalues on the Fichera domain for Case 3 obtained by the $P_2$ element with the initial uniform mesh $(h_0=\sqrt{3}/2)$.}
\centering
\bigskip
\begin{small}
\begin{tabular}{ccccccccc}
\hline
$l$ & ${DoF}_{1,l}$ & $\mathrm{k}_{1,h_l}$& $l$ & ${DoF}_{1,l}$ & $\mathrm{k}_{1,h_l}$&$l$ & ${DoF}_{1,l}$ & $\mathrm{k}_{1,h_l}$                                             \\ \hline
0 & 944   & 1.07062332 & 9  &4580  & 1.06818946& 18 &30464 & 1.06774425  \\
1 &1008   & 1.07028496 & 10 &5484  & 1.06811391& 19 &38662 & 1.06773074  \\
2 &1086  & 1.06995107  & 11 &6218  & 1.06805111& 20 &50392 & 1.06772048  \\
3 &1328  & 1.06946592  & 12 &8026  & 1.06797512& 21 &62252 & 1.06771302  \\
4 &1380  & 1.06930904  & 13 &10446 & 1.06789139& 22 &75812 & 1.06770574  \\
5 &1874  & 1.06877474  & 14 &12652 & 1.06785206& 23 &97534 & 1.06769919  \\
6 &2290  & 1.06859255  & 15 &14980 & 1.06781498& 24 &123350& 1.06769565  \\
7 &2772  & 1.06844332  & 16 &19566 & 1.06777787& 25 &147234& 1.06769242  \\
8 &3510  & 1.06832147  & 17 &24912 & 1.06776204& 26 &187504& 1.06768887  \\ \hline
\end{tabular}
\end{small}
\end{table}



\end{document}